\theoremstyle{plain}
\newtheorem{thm}{Theorem}[section]
\newtheorem{cor}[thm]{Corollary}
\newtheorem{lemma}[thm]{Lemma}
\newtheorem{prop}[thm]{Proposition}
\newtheorem{remark}[thm]{Remark}
\theoremstyle{definition}
\newtheorem{definition}[thm]{Definition}
\newtheorem{ex}[thm]{Example}
\theoremstyle{conjecture}
\def\A{\mathcal{A}}
\def\B{\mathcal{B}}
\def\C{\mathcal{C}}
\def\D{\mathcal{D}}
\def\E{\mathcal{E}}
\def\G{\mathcal{G}}
\def\U{\mathcal{U}}
\def\I{\mathcal{I}}
\def\P{\mathcal{P}}
\def\S{\mathcal{S}}
\def\T{\mathcal{T}}
\def\ZZZ{\mathbb{Z}}
\def\Hom{\operatorname{Hom}}
\def\Ext{\operatorname{Ext}}
\def\ind{\operatorname{ind}}
\def\mod{\operatorname{mod}}
\def\obj{\operatorname{obj}}
\begin{document}

\title{Hom-configurations and Noncrossing Partitions}

\author{Raquel Coelho Sim\~oes}
\address{School of Mathematics, \\
University of Leeds. \\ Leeds LS2 9JT, UK.}
\email{rcsimoes@maths.leeds.ac.uk}


\date{5 December 2010}


\begin{abstract}
We study maximal Hom-free sets in the $\tau [2]$-orbit category $C(Q)$ of the bounded derived category for the path algebra associated to a Dynkin quiver $Q$, where $\tau$ denotes the Auslander-Reiten translation and $[2]$ denotes the square of the shift functor. We prove that these sets are in bijection with periodic combinatorial configurations, as introduced by Riedtmann, certain $\Hom_{\leq 0}$-configurations, studied by Buan, Reiten and Thomas, and noncrossing partitions of the Coxeter group associated to $Q$ which are not contained in any proper standard parabolic subgroup. Note that Reading has proved that these noncrossing partitions are in bijection with positive clusters in the associated cluster algebra. Finally, we give a definition of mutation of maximal Hom-free sets in $\C(Q)$ and prove that the graph of these mutations is connected.
\end{abstract}

\keywords{Cluster combinatorics, derived category, exceptional sequences, Hom-configurations, mutations, noncrossing partitions, perpendicular categories, reflection functors, quiver representations.}

\subjclass[2000]{Primary: 05E10, 16G20; Secondary: 13F60, 16E35, 20F55}


\maketitle

\bibliographystyle{plain}

\section{Introduction}

Let $Q$ be a Dynkin quiver, $\D^b(Q)$ the bounded derived category for the path algebra associated to $Q$, with shift functor $[1]$ and Auslander-Reiten translation $\tau$. 

Our main object of study is the set of Hom-configurations in the orbit category $\C(Q) := \D^b (Q) /\tau [2]$, which is triangulated by Keller \cite{Keller}. A Hom-configuration is defined to be a maximal Hom-free set of indecomposable objects in this category. We will give bijections between the collection of Hom-configurations in $\C(Q)$ and collections of other representation-theoretic and combinatorial objects.

One such collection is the set of combinatorial configurations, which were introduced by Riedtmann \cite{Riedtmann} in order to classify self-injective algebras of finite representation type. A combinatorial configuration can be regarded as a certain Hom-free collection of indecomposable objects in the bounded derived category of finitely generated modules over a path algebra associated to a Dynkin quiver. Riedtmann proved (cf. \cite{Riedtmann, Riedtmann2}) that combinatorial configurations in type $A$ and $D$ are invariant under the functor $\tau [2]$.

Motivated by this, the authors of \cite{BMRRT} studied the Ext-version of these configurations in the bounded derived category $\D^b$ for any finite dimensional hereditary algebra, the so called Ext-configurations. The authors proved that these objects are invariant under the functor $\tau^{-1} [1]$. This implies that Ext-configurations in the bounded derived category $\D^b$ are in one-to-one correspondence with Ext-configurations in the cluster category $\D^b/ \tau^{-1} [1]$, which are called cluster-tilting objects. These objects were proved to be in bijection with maximal Ext-free sets in the cluster category (cf. \cite[2.3]{BMRRT}). One should then expect that a similar result holds for combinatorial configurations in the orbit category $\D^b(Q) / \tau [2]$, where $\D^b(Q)$ is the bounded derived category for the path algebra associated to any Dynkin quiver $Q$. 

We prove that Hom-configurations in $\C(Q)$ are in bijection with periodic combinatorial configurations, i.e., combinatorial configurations in $\D^b(Q)$ which are invariant under $\tau [2]$.\\

Riedtmann also gave a natural bijection between the set of combinatorial configurations for type $A_n$ and the set $NC (n)$ of classical noncrossing partitions of the set $\{1, \ldots, n\}$, which was introduced by Kreweras \cite{Kr} in 1972. 

Later, in the early 2000's, Brady \cite{BW2} and Bessis \cite{Bessis} independently introduced an algebraic generalization of classical noncrossing partitions. To each finite Coxeter group $W$ these authors associate a poset, called the poset of noncrossing partitions of $W$, which we denote by $NC(W)$. The posets $NC (n)$ and $NC (A_{n-1})$ are known to be isomorphic \cite{Biane}. 

The initial motivation for this article was to generalize Riedtmann's bijection to any Dynkin case, using Hom-configurations. However, a simple computation in type $D_4$ shows that the number of Hom-configurations in $\C(Q)$ is different from the number of noncrossing partitions of type $D_3$. Given this fact, it is natural to consider a special subset of the set of noncrossing partitions instead. This subset consists of the noncrossing partitions which are not contained in any proper standard parabolic subgroup. Reading \cite{Reading} has proved that this subset is in bijection with positive clusters in the associated cluster algebra (\cite{CA1}, see also \cite{FZ}). For this reason, we say that these noncrossing partitions are positive. We denote by $NC^+ (W)$ the set of positive noncrossing partitions of $W$. 

We give a combinatorial description of positive noncrossing partitions for type $A$, which allows to conclude that there 
is a one-to-one correspondence between $NC (A_{n-1})$ and $NC^+ (A_n)$. Let $f$ denote this one-to-one correspondence.

We also give a bijection $\varphi$ between positive noncrossing partitions of the Coxeter group $W_Q$ associated to $Q$ and Hom-configurations in $\C(Q)$. This bijection generalizes Riedtmann's bijection, in the sense that Riedtmann's bijection is given by the composition of $\varphi$ with $f$.\\

Our work is closely related to \cite{BRT}. In this paper the authors give a natural bijection between $m-$clusters and $m-$noncrossing partitions, for $m \geq 1$, using special classes of exceptional sequences in the bounded derived category. The sets of elements of one of these special classes of exceptional sequences are called $m-\Hom_{\leq 0}-$configurations. These configurations are contained in $\D^{\geq 0}_{\leq m}$, the full additive subcategory of the bounded derived category generated by the indecomposable objects of $KQ-\mod [i]$, with $0 \leq i \leq m$.

Hom-configurations turn out to be in bijection with $1-\Hom_{\leq 0}-$ configurations contained in $D^{(\geq 0)-}_{\leq 1}$, the full additive subcategory of the bounded derived category generated by the indecomposable objects of $KQ-\mod \cup KQ-\mod [1]$ other than the projective modules.\\

The main results presented in this paper can be summarized in the following theorem.

\begin{thm}\label{aim}
Let $Q$ be a Dynkin quiver and $\C(Q)$ the orbit category $\D^b(Q) / \tau [2]$. Then the following collections of objects are in bijection:
\begin{enumerate}
\item Hom-configurations in $\C(Q)$;
\item $\Hom_{\leq 0}$-configurations contained in $D^{(\geq 0)-}_{\leq 1}$;
\item periodic combinatorial configurations;
\item sincere Hom-free sets in $KQ-\mod$;
\item positive noncrossing partitions of the Coxeter group $W_Q$ associated to $Q$.
\end{enumerate}
\end{thm}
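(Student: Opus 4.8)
The plan is to prove the theorem by establishing a cycle of bijections, rather than constructing each pairwise correspondence separately. I would organize the argument around the following route: $(4) \to (1) \to (3)$, then $(1) \leftrightarrow (2)$, and finally $(1) \leftrightarrow (5)$. The starting point is the observation that every indecomposable object of $\C(Q)$ has a representative in $KQ\text{-}\mod \cup KQ\text{-}\mod[1]$, and in fact (after removing projectives and shifts of injectives to avoid double-counting under $\tau[2]$) a canonical representative among the non-projective indecomposable modules. This gives a concrete combinatorial model for $\C(Q)$ and is the technical backbone for everything else.

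First I would analyze the Hom-spaces in $\C(Q)$ in terms of the module category. The key computation is that for indecomposable modules $X, Y$, one has $\Hom_{\C(Q)}(X,Y) = \Hom_{KQ}(X,Y) \oplus \Hom_{KQ}(Y, \tau X)$ (using Serre duality / the AR-formula $D\Hom(Y,\tau X) \cong \Ext^1(X,Y)$), so Hom-freeness in $\C(Q)$ between the module representatives amounts to $\Hom_{KQ}(X,Y) = 0 = \Ext^1_{KQ}(X,Y)$ — i.e., the underlying set of modules is both Hom-free and Ext-free (a "rigid" Hom-free set) in $KQ\text{-}\mod$. Maximality of such a set, combined with a counting argument (such a set has exactly $n = |Q_0|$ elements, matching the rank), forces it to be sincere; conversely a sincere Hom-free set is automatically Ext-free and maximal. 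This yields the bijection $(1) \leftrightarrow (4)$. For $(1) \leftrightarrow (3)$: a periodic combinatorial configuration is a $\tau[2]$-invariant Hom-free set in $\D^b(Q)$ in the sense of Riedtmann, and such a set descends to exactly a Hom-free set in the orbit category; maximality transfers along the quotient, giving the correspondence. For $(1) \leftrightarrow (2)$, I would use the identification of $D^{(\geq 0)-}_{\leq 1}$ with the non-projective part of $KQ\text{-}\mod \cup KQ\text{-}\mod[1]$, and check that the $\Hom_{\leq 0}$-configuration condition of \cite{BRT} translates, under this identification, into precisely the sincere-Hom-free condition of (4).

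The main obstacle — and the part requiring the most genuine work — is the bijection with positive noncrossing partitions, $(1) \leftrightarrow (5)$. Here I would exploit the known bijection (from exceptional sequences / \cite{BRT}, \cite{IT}) between noncrossing partitions of $W_Q$ and certain thick (wide) subcategories of $KQ\text{-}\mod$, together with the complementation/perpendicular-category machinery: a noncrossing partition corresponds to a factorization $c = c_1 c_2$ of the Coxeter element, equivalently to a thick subcategory $\A$ with exceptional objects forming the "support." The claim would be that $\A$ lies in no proper standard parabolic subgroup precisely when its perpendicular complement $\A^\perp$ contains no nonzero module supported on a proper subquiver — which, after applying the reflection-functor / APR-tilt bookkeeping, is exactly the sincerity condition on the associated Hom-free set. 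Verifying that the perpendicular category construction intertwines the two maximality/sincerity conditions, and that the cardinalities match, is where the bulk of the reflection-functor and homological computations live. I would set up the type-$A$ combinatorial description of $NC^+$ first as a sanity check (recovering the map $f$ and Riedtmann's bijection via $\varphi \circ f$), and then push the argument through in general by induction on the rank using the perpendicular category, which is itself equivalent to a smaller hereditary module category.
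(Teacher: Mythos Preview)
Your approach to $(1)\leftrightarrow(4)$ contains a genuine error that propagates through the rest of the argument. The claim that a fundamental domain for $\tau[2]$ on $\ind\D^b(Q)$ can be taken among the non-projective modules is false: the correct fundamental domain $\E(Q)$ consists of $\ind(KQ\text{-}\mod)$ together with $\ind(KQ\text{-}\mod\setminus\I)[1]$, i.e.\ objects of degree~$0$ \emph{and} degree~$1$ (a cardinality count already rules out a module-only fundamental domain). Your Hom computation is correspondingly wrong: for two modules $X,Y$ one has simply $\Hom_{\C(Q)}(X,Y)\cong\Hom_{KQ}(X,Y)$, with no extra $\Hom_{KQ}(Y,\tau X)$ or $\Ext^1$ summand, since $\Hom_{\D^b}(X,\tau Y[2])$ vanishes (the target lives in degree $\geq 1$). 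So Hom-freeness in $\C(Q)$ restricted to modules is just ordinary Hom-freeness in $KQ\text{-}\mod$, and a Hom-configuration is \emph{not} a set of $n$ modules except in the single case of the simples. The actual bijection $(1)\leftrightarrow(4)$ is: restrict a Hom-configuration to its degree-$0$ part to obtain a sincere Hom-free set (which typically has fewer than $n$ elements), and conversely extend a sincere Hom-free set $\T$ to a Hom-configuration by adjoining the shifts $S[1]$ of the simple objects of $\T^\perp$. Neither direction is automatic: sincerity of the degree-$0$ part is proved by induction using reflection functors together with the key structural result $\,^\perp M^\perp_Q\simeq\C(Q')$, and uniqueness of the extension uses $\T^\perp\simeq KQ'\text{-}\mod$ with $|Q'_0|=n-|\T|$.

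A second gap concerns $(1)\leftrightarrow(3)$. Riedtmann's second axiom (for every $Z\in\ind\D^b(Q)$ there exists $X\in\T$ with $\Hom(Z,X)\neq 0$) is not the same as maximal Hom-freeness, so ``maximality transfers along the quotient'' does not yield the converse direction. The paper obtains it by counting: periodic combinatorial configurations restrict injectively to Hom-configurations, BLR-configurations form a subset of the periodic ones, and a type-by-type check (using the enumeration in \cite{BLR}) shows that the number of BLR-configurations already equals the positive Fuss-Catalan number, which by $(1)\leftrightarrow(5)$ counts Hom-configurations. Your sketch for $(1)\leftrightarrow(5)$ via wide subcategories and the sincerity/parabolic dictionary is essentially the paper's approach and is on the right track, but note that it relies on $(1)\leftrightarrow(4)$ being established correctly first.
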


The paper is organized as follows. We first prove that the subcategory of $\C(Q)$
\[
\,^\perp M^\perp_Q = \{ X \in \C (Q) \, \mid \, \Hom_{\C (Q)} (X,M) = 0 = \Hom_{\C (Q)} (M,X) \},
\]
where $M$ is an indecomposable object of $\C(Q)$, is equivalent to $\C (Q^\prime)$ where $Q^\prime$ is a disjoint union of quivers of Dynkin type, whose number of vertices is $n-1$.

This result gives us a strategy to prove some of the results in this paper. This strategy is to use induction and reduce to the simpler case when we have a Hom-configuration which contains a simple projective indecomposable module. 

In Section \ref{secBRT}, we prove the bijection between $(1)$ and $(2)$ in \ref{aim}. The one-to-one correspondence between $(1)$ and $(4)$ is proved in Section \ref{secsincere}. This result is crucial to prove the relation between Hom-configurations and positive noncrossing partitions.

In Section \ref{secpositive} we give a bijection between $(1)$ and $(5)$. We note that Buan, Reiten and Thomas \cite[Theorem 7]{BRT1} provide a different Coxeter-theoretic description for the noncrossing partitions in bijection with $Hom_{\leq 0}-$ configurations contained in $\D^{(\geq 0)-}_{\leq 1}$.  

In Section \ref{secriedtmann} we give a combinatorial description of this class of noncrossing partitions for type $A$ and check that the bijection between $(1)$ and $(5)$ generalizes the bijection given by Riedtmann in type $A$.

In Section \ref{secriedtmannconf}, we prove the bijection between $(1)$ and $(3)$. We use some results in \cite{BLR} and the fact that the number of Hom-configurations is given by the so called positive Fuss-Catalan number corresponding to the Coxeter group $W_Q$ associated to the Dynkin quiver $Q$. This fact follows immediately from the bijection between Hom-configurations in $\C(Q)$ and positive noncrossing partitions. 

Finally, in Section \ref{secmutations} we give a definition of mutation of Hom-configurations and prove that the graph of these mutations is connected.

\section{Perpendicular category for Hom-configurations - Main tool}\label{sec2}

Firstly let us fix some notation. Denote by $K$ an algebraically closed field, $Q$ a simply laced Dynkin quiver with $n$ vertices, $h$ its associated Coxeter number and $KQ$ the path algebra. All modules considered will be finite-dimensional. The support of a module $M$, which we will denote by $supp (M)$, is the set of vertices $i$ of $Q$ for which $M_i \ne 0$. The bounded derived category of $KQ$-modules will be denoted by $\mathcal{D} (Q)$. We know that the indecomposable objects in $\D^b (Q)$ are of the form $M[i]$, for some indecomposable $KQ$-module $M$ and some integer $i$. If $X = M [i]$ is an indecomposable object in $D (Q)$, we denote by $\overline{X}$ the corresponding indecomposable $KQ$-module $M$, and by $d(X)$ its degree, i.e., $d(X) = d(M [i]) = i$.

We define a partial order in the set $\ind \, \D^b (Q)$, the subcategory of isomorphism classes of indecomposable objects in $\D^b (Q)$, as follows. Given $X, Y \in  \ind \, \D^b (Q)$, we say that $X \preceq Y$ if there is a path from $X$ to $Y$ in the Auslander-Reiten quiver of $\D^b (Q)$. It is clear that $\preceq$ is indeed a partial order. Fix a refinement $\leq$ of $\preceq$ to a total order.

Let $\C (Q)$ be the category $\D^b (Q) / \tau^{h-1}$, where $\tau$ is the Auslander-Reiten translate. Note that $\C (Q)$ can also be defined to be the category $\D^b (Q) / \tau [2]$, where $[ \, - \, ]$ is the shift functor and $\tau$ is the AR-translate in $\D^b (Q)$ (cf. \cite{Gabriel}).

It is easy to check that the objects in the subcategory 
\[
\E (Q) = \ind ( \mod \, A \cup (\mod A \setminus \I) [1])
\]
of $\D^b(Q)$ where $\I$ denotes the set of injective modules, is a fundamental domain for the action of $\tau [2]$ on $\ind \, \D^b (Q)$. From now on, we identify the objects in $\ind \, \C(Q)$ with their representatives in this fundamental domain.

\begin{prop}\label{homC(Q)}
Let $X, Y \in \E (Q)$.
\begin{enumerate}
\item We have $\Hom_{\D^b (Q)} (X, \tau^k Y [2k]) = 0$ for all $k \ne 0, 1$.
\item $\Hom_{\D^b (Q)} (X, \tau^k Y [2k]) \ne 0$ for at most one value of $k$.
\end{enumerate}
\end{prop}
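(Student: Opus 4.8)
The plan is to reduce everything to facts about Hom-spaces in the bounded derived category $\D^b(Q)$, where we have strong vanishing statements because $KQ$ is hereditary. Recall that for indecomposable $KQ$-modules $M$ and $N$ one has $\Hom_{\D^b(Q)}(M[i], N[j]) \ne 0$ only if $j - i \in \{0, 1\}$, since $\Hom_{KQ}$ and $\Ext^1_{KQ}$ are the only possibly nonzero extension groups. Writing $X = M[a]$ and $Y = N[b]$ with $a, b \in \{0,1\}$ (and $N$ non-injective if $b = 1$, $M$ non-injective if $a=1$, though this will not be needed directly), we have $\tau^k Y[2k] = (\tau^k N)[b + 2k]$. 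The degree shift between $X$ and $\tau^k Y[2k]$ is $(b + 2k) - a$, so $\Hom_{\D^b(Q)}(X, \tau^k Y[2k]) \ne 0$ forces $(b + 2k) - a \in \{0,1\}$, i.e. $2k \in \{a - b, a - b + 1\}$. Since $a - b \in \{-1, 0, 1\}$, the only even integer in $\{a-b, a-b+1\}$ is $0$ (when $a - b \in \{-1, 0\}$) or $0$ again via $a - b + 1 = 0$ when $a-b=-1$; more carefully, $2k$ must be one of $-1, 0, 1, 2$ depending on $a,b$, and the even values available are $0$ and $2$. Hence $k \in \{0, 1\}$, which is exactly part (1).

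For part (2), the remaining task is to show that $\Hom_{\D^b(Q)}(X, Y)$ and $\Hom_{\D^b(Q)}(X, \tau Y[2])$ are not both nonzero. By the analysis above, $\Hom_{\D^b(Q)}(X,Y) \ne 0$ requires the degree difference $b - a \in \{0, 1\}$, while $\Hom_{\D^b(Q)}(X, \tau Y[2]) \ne 0$ requires $(b + 2) - a \in \{0,1\}$, i.e. $b - a \in \{-2, -1\}$. These two conditions on $b - a$ are incompatible: $\{0,1\} \cap \{-2,-1\} = \emptyset$. Since $a, b \in \{0,1\}$ this is immediate. Therefore at most one of $k = 0$ and $k = 1$ can give a nonzero Hom-space, and combined with part (1) this proves (2).

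The argument is essentially a bookkeeping exercise with degrees, so the main thing to get right is the translation between the $\tau[2]$-orbit description and honest degree shifts in $\D^b(Q)$, together with the hereditary vanishing $\Hom_{\D^b(Q)}(M[i], N[j]) = 0$ unless $j - i \in \{0,1\}$. I would state the latter explicitly at the start as the one external input. The only place a little care is needed is making sure $\tau$ commutes with the shift (so that $\tau^k Y[2k]$ really has degree $b + 2k$ when $Y$ has degree $b$), which holds since $\tau$ is induced by the Serre functor up to shift and in particular is a triangle functor commuting with $[1]$. No case distinction on injectivity of $M$ or $N$ is needed for this proposition; those constraints only matter when one passes to statements about the fundamental domain $\E(Q)$ itself. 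I do not anticipate a genuine obstacle here — the content is entirely the degree constraint — but if one wanted a fully self-contained treatment, the step deserving a sentence of justification is why $j - i \notin \{0,1\}$ forces the Hom-space to vanish, namely because $\Hom_{\D^b(Q)}(M[i], N[j]) \cong \Ext^{j-i}_{KQ}(M, N)$ and $KQ$ has global dimension $1$.
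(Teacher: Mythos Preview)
Your argument has a genuine gap: you assume that $\tau^k Y[2k]$ has degree $d(Y)+2k$, i.e.\ that $\tau$ preserves degree. It does not. The functor $\tau$ commutes with $[1]$, but applied to a projective module $P$ it gives $\tau P = I[-1]$ for an injective $I$, so the degree drops; dually $\tau^{-1}$ raises degree on injectives. Thus if $Y=N[b]$ with $N$ a module, $\tau^k N$ need not be concentrated in degree $0$, and $d(\tau^k Y[2k])$ can differ from $b+2k$ by up to $|k|$.

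This matters precisely at the boundary values $k=-1$ and $k=2$ that your degree count is supposed to exclude. For $k=-1$ with $b=1$, if $N$ were injective then $\tau^{-1}Y[-2]$ would land in degree $0$ and the Hom-space could be nonzero; the paper explicitly uses the $\E(Q)$ condition that $N$ is non-injective here, contrary to your remark that no injectivity hypothesis is needed. For $k=2$ with $b=0$, one can have $d(\tau^2 Y[4])=2$ (when $Y$ is projective and $\overline{\tau Y}$ is injective-projective), and then the vanishing comes from the target being an injective module, not from degree alone. The paper proves part (1) by this kind of explicit case analysis of the actual degree of $\tau^k Y[2k]$.

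The same issue undermines your proof of (2): when $N$ is projective, $\tau Y[2]$ sits in degree $b+1$ rather than $b+2$, and the two degree windows for $k=0$ and $k=1$ overlap at $b-a=0$. The paper avoids this by using Serre duality, $\Hom_{\D^b(Q)}(X,\tau Y[2])\simeq \Hom_{\D^b(Q)}(Y[1],X)$, together with the partial order $\preceq$ on the AR-quiver: if $\Hom(X,Y)\ne 0$ then $X\preceq Y\prec Y[1]$, so $Y[1]\not\preceq X$ and $\Hom(Y[1],X)=0$.
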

\begin{proof}
(a) Let $k = -1$ and suppose $d(Y) = 1$. Note that $d (\tau^{-1} Y [-2]) = 0$ if and only if $Y = I [1]$ for some injective $KQ$-module $I$, which contradicts the hypothesis that $Y \in \E (Q)$. Hence we have $d (\tau^{-1} Y [-2]) = -1$ and it is clear that $\Hom_{\D^b (Q)} (X, \tau^k Y [2k]) = 0$. Finally, if $d (Y) = 0$ then $d( \tau^{-1} Y [-2]) = -2$ or $-1$, and so it is obvious that there is no map from $X$ to $\tau^{-1} Y [-2]$. The case when $k < -1$ is trivial since $d(\tau^k Y [2k])$ is negative. If $k > 2$, then $d(\tau^k Y [2k]) \geq k \geq 3$, since $d(Y) = 0$ or $1$, and so $\Hom_{\D^b (Q)} (X, \tau^k Y [2k]) = 0$. Let now $k = 2$. If $d (Y) = 1$ then $d(\tau^2 Y[4]) \in \{3, 4, 5\}$ and our claim holds. Suppose then that $d (Y) = 0$. Then $d (\tau^2 Y [4]) \in \{2, 3, 4\}$. The only nontrivial case is when $d(\tau^2 Y[4]) = 2$. But this holds if and only if $Y$ is projective and $\overline{\tau Y}$ is an injective-projective module, i.e., $\overline{\tau Y} = P_a$ where $a$ must be a source of $Q$. Therefore $\tau^2 Y [4] = I_a [2]$, and $\Hom_{\D^b (Q)} (X, I_a [2]) \ne 0$ implies that $d(X) = 2$, which is a contradiction since $X \in \E(Q)$. 

(b) Note that $\Hom_{\D^b (Q)} (X, \tau Y [2]) \simeq \Hom_{\D^b (Q)} (Y[1], X)$. Suppose $\Hom_{\D^b (Q)} (X, Y) \ne 0$. Then, in particular $X \preceq Y$, and so, by transitivity we have $X \preceq Y[1]$. Because $X \ne Y[1]$, we have by antisymmetry that $Y[1] \not\preceq X$, which implies that $\Hom_{\D^b (Q)} (Y[1], X) = 0$.
\end{proof}

\begin{prop}\label{mainprop}
Let $M$ be an indecomposable object in $\C (Q)$. The full subcategory $\,^\perp M^\perp_Q$ of $\C (Q)$ whose set of objects is
\[
\,^\perp M^\perp_Q = \{ X \in \C (Q) \, \mid \, \Hom_{\C (Q)} (X,M) = 0 = \Hom_{\C (Q)} (M,X) \}
\]
is equivalent to $\C (Q^\prime)$ where $Q^\prime$ is a disjoint union of quivers of Dynkin type, whose number of vertices is $n-1$.
\end{prop}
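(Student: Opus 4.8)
The plan is to realize $\,^\perp M^\perp_Q$ as the image in $\C(Q)$ of a perpendicular subcategory in $\D^b(Q)$, and then to identify that subcategory with the derived category of a smaller hereditary algebra. First I would lift the situation to $\D^b(Q)$: using Proposition~\ref{homC(Q)}, the condition $\Hom_{\C(Q)}(X,M)=0=\Hom_{\C(Q)}(M,X)$ for $X,M\in\E(Q)$ translates into the vanishing of $\Hom_{\D^b(Q)}(X,\tau^kM[2k])$ and $\Hom_{\D^b(Q)}(\tau^kM[2k],X)$ for the single relevant value of $k$ in each direction. Since $M$ is exceptional (being indecomposable in a category where $\Ext^1$ is controlled by $\Hom$ of shifts), one knows that $M$, viewed suitably in $\D^b(Q)$, is an exceptional object, and the right perpendicular category $M^\perp$ in the sense of Geigle--Lenzing/Schofield is equivalent to $\D^b(KQ')$ for a hereditary algebra $KQ'$ with $n-1$ simples; my task is to check that the \emph{two-sided} perpendicular condition in $\C(Q)$ corresponds exactly to landing in (the $\C$-image of) this $M^\perp$. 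Concretely I would reduce to the generating case — by the inductive strategy flagged after the statement, I may assume $\overline{M}=S_a=P_a$ is a simple projective module at a source $a$, so that $M$ itself can be taken to be $P_a$ in the fundamental domain; then the perpendicular quiver $Q'$ is literally $Q$ with the vertex $a$ deleted.

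The key steps, in order, are: (i) show every Hom-configuration containing $M$ may, up to the $\C(Q)$-autoequivalences induced by reflection functors / the action of $\tau[2]$, be moved to one containing a simple projective module — this is the "reduce to $M=P_a$ simple projective" step and is where the earlier perpendicular-category machinery of exceptional objects does the real work; (ii) with $M=P_a$, compute $\,^\perp M^\perp_Q$ directly: $\Hom_{\C(Q)}(P_a,X)=0$ kills all $X$ supported at $a$ in degree $0$, while $\Hom_{\C(Q)}(X,P_a)=0$, which by the AR-formula equals $D\Hom_{\C(Q)}(P_a,\tau X)$-type considerations together with Proposition~\ref{homC(Q)}, kills the remaining objects "seeing" $a$; the surviving indecomposables are exactly $\ind(\mod KQ'\cup(\mod KQ'\setminus\I')[1])=\E(Q')$ where $Q'=Q\setminus\{a\}$; (iii) check this identification is compatible with the triangulated structure, i.e., the inclusion $\mod KQ'\hookrightarrow\mod KQ$ (extension by zero at $a$) induces a fully faithful exact functor $\D^b(Q')\to\D^b(Q)$ descending to a triangle functor $\C(Q')\to\C(Q)$ with image $\,^\perp M^\perp_Q$; (iv) observe that deleting one vertex from a Dynkin quiver yields a disjoint union of Dynkin quivers with $n-1$ vertices total, so $Q'$ has the required form.

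Steps (iii)--(iv) are essentially formal: $\C(Q')$ for a disconnected $Q'$ is the "product" of the pieces, Keller's theorem gives the triangulated structure on each, and the Hom-vanishing computations of Proposition~\ref{homC(Q)} transfer verbatim since all relevant Homs are computed in $\D^b$. The main obstacle is step (i): making precise that an arbitrary indecomposable $M\in\C(Q)$ can be normalized to a simple projective without changing $\,^\perp M^\perp_Q$ up to equivalence. This requires knowing that the exceptional object underlying $M$ can be completed to an exceptional sequence, that the braid group (equivalently, the composition of reflection functors $\Hom$-faithfully realized on $\C(Q)$) acts transitively enough on exceptional objects to bring $M$ to a simple, and that these autoequivalences preserve the two-sided orthogonal. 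Equivalently, one uses Schofield's theorem that $M^\perp\simeq\D^b(KQ')$ already in $\D^b(Q)$, together with the fact (from Proposition~\ref{homC(Q)}) that passing to the orbit category $\C$ does not create new morphisms between $M$ and objects of $M^\perp$ in either direction, so that $\,^\perp M^\perp_Q$ is precisely the image of $M^\perp\cap{}^\perp M$ under the quotient; the dimension count $n-1$ and the Dynkin-type conclusion then follow from Schofield's description of $Q'$.
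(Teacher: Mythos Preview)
Your strategy---reduce to $M$ simple projective and then identify $^\perp M^\perp_Q$ with $\C(Q')$ for $Q'=Q\setminus\{a\}$---is exactly the paper's. The paper executes the reduction differently: rather than composing reflection functors or invoking the braid group, it builds the section $\Sigma$ of the AR-quiver $\mathbb{Z}Q$ rooted at the vertex $x_0$ corresponding to $M$, sets $\Omega=\Sigma^{\mathrm{op}}$, and uses the standard derived equivalence $\D^b(Q)\simeq\D^b(\Omega)$ coming from $\mathbb{Z}Q\simeq\mathbb{Z}\Omega$. Since $x_0$ is a sink in $\Omega$, the image of $M$ is a simple projective $K\Omega$-module, and Lemma~\ref{mainlemma} finishes. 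This is equivalent to your reflection-functor route (any section of $\mathbb{Z}Q$ is reached from the projective slice by a chain of sink reflections) but is packaged in a single move and is self-contained at this point in the paper, whereas the reflection-functor version (Remark~\ref{reflectionfunctor}) only appears later.

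Two slips to fix. First, a simple projective $P_a=S_a$ sits at a \emph{sink} $a$, not a source. Second, your step~(i) should not speak of ``Hom-configurations containing $M$'': no configuration is given here, and all you need is an autoequivalence of $\C(Q)$ carrying $M$ to a simple projective, which then carries $^\perp M^\perp_Q$ to the corresponding perpendicular. Finally, the Schofield alternative you sketch at the end is looser than it appears: Schofield's $M^\perp$ is a module-category construction, and the two-sided $\C(Q)$-orthogonal unpacks via Proposition~\ref{homC(Q)} into four $\D^b$-Hom vanishing conditions (on $\Hom(X,M)$, $\Hom(X,\tau M[2])$, $\Hom(M,X)$, $\Hom(M,\tau X[2])$) that do not literally coincide with $M^\perp\cap{}^\perp M$ in $\D^b(Q)$. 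The paper avoids this issue entirely by reducing first and then computing the simple-projective perpendicular explicitly in Lemma~\ref{mainlemma}.
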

 
Firstly, we will show this proposition in the case when $M$ is an indecomposable simple projective $KQ$-module.

\begin{lemma}\label{mainlemma}
Let $M = P_a$ ($a \in Q_0$) be an indecomposable simple projective $KQ$-module. Then $\,^\perp M^\perp_Q$ is equivalent to $\C (Q^\prime)$ where $Q^\prime$ is the full subquiver of $Q$ whose set of vertices is $Q_0 \setminus \{a\}$.
\end{lemma}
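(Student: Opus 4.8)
The plan is to translate the problem into concrete representation theory via Proposition~\ref{homC(Q)}, identify the objects of $\,^\perp M^\perp_Q$ with those of $\C(Q^\prime)$, and only at the end upgrade the resulting bijection to an equivalence. \textbf{Step 1 (the comparison functor).} Since $P_a$ is simple projective, $a$ is a sink of $Q$, so $\mod KQ^\prime$ is exactly the full subcategory of $\mod KQ$ consisting of the modules $N$ with $N_a=0$; this is closed under submodules, quotients and extensions, and as both algebras are hereditary one gets $\Ext^i_{KQ^\prime}(X,Y)=\Ext^i_{KQ}(X,Y)$ for all $X,Y\in\mod KQ^\prime$ and all $i$. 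Hence the exact inclusion induces a fully faithful triangle functor $\iota\colon\D^b(Q^\prime)\hookrightarrow\D^b(Q)$. I would also record the injective comparison, which is where the sink hypothesis really enters: a path of $Q$ between vertices $\ne a$ cannot pass through the sink $a$, so $I^{KQ^\prime}_j=I^{KQ}_j$ whenever $I^{KQ}_j$ is supported off $a$, while conversely each $I^{KQ^\prime}_j$ is already $KQ$-injective, since for any $N$ one has $0\to S_a^{\,\dim N_a}\to N\to N^\prime\to 0$ with $N^\prime\in\mod KQ^\prime$ and $\Ext^1_{KQ}(S_a,-)=0$ (as $S_a=P_a$ is projective). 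Thus the indecomposable $KQ^\prime$-injectives are precisely the indecomposable $KQ$-injectives supported off $a$, and $\iota$ restricts to a map $\E(Q^\prime)\to\E(Q)$.

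\textbf{Step 2 (the objects).} Writing $M=P_a=S_a$, so that $\tau_Q M=I_a[-1]$, for $X=\ol X[d]\in\E(Q)$ ($d\in\{0,1\}$) Proposition~\ref{homC(Q)} collapses the orbit sums to
\[
\Hom_{\C(Q)}(X,M)=\Ext^{-d}_{KQ}(\ol X,S_a)\oplus\Ext^{1-d}_{KQ}(\ol X,I_a),
\]
\[
\Hom_{\C(Q)}(M,X)=\Ext^{d}_{KQ}(S_a,\ol X)\oplus\Hom_{\D^b(Q)}(S_a,\tau_Q\ol X[d+2]).
\]
Projectivity of $S_a$, injectivity of $I_a$ and of the $I_j$ (the latter entering when $\ol X=P_j$), and the identity $\dim\Hom_{KQ}(\ol X,I_a)=\dim\ol X_a$ kill every summand, so that $X\in\,^\perp M^\perp_Q$ if and only if $\ol X_a=0$, i.e. $\ol X\in\mod KQ^\prime$. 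Combined with the injective comparison of Step~1 (needed precisely to match the degree-one halves of $\E(Q)$ and $\E(Q^\prime)$), this shows $\iota$ induces a bijection from $\ind\C(Q^\prime)$ onto $\ind\,^\perp M^\perp_Q$.

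\textbf{Step 3 (the equivalence).} For $X,Y\in\E(Q^\prime)$, Proposition~\ref{homC(Q)} reduces $\Hom_{\C(Q)}(\iota X,\iota Y)$ to $\Ext^{d(Y)-d(X)}_{KQ}(\ol X,\ol Y)$ in every case except $(d(X),d(Y))=(1,0)$; in that one case I would use that $\C(Q)$ is $(-1)$-Calabi--Yau (the Serre functor $\tau_Q[1]$ of $\D^b(Q)$ descends, and $\tau_Q[2]\cong\mathrm{id}$ in $\C(Q)$ forces it to become $[-1]$), so that $\Hom_{\C(Q)}(\ol X[1],\ol Y)\cong D\Hom_{\C(Q)}(\ol Y,\ol X)=D\Hom_{KQ}(\ol Y,\ol X)$. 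Full faithfulness of $\iota$, together with the same computations run inside $\C(Q^\prime)$ (also $(-1)$-Calabi--Yau), then gives $\Hom_{\C(Q)}(\iota X,\iota Y)\cong\Hom_{\C(Q^\prime)}(X,Y)$ on all indecomposables. To turn the object bijection of Step~2 and these Hom-isomorphisms into a genuine equivalence I would pass to standard forms: $\D^b(Q)$ is the mesh category of $\ZZZ Q$, hence so are the orbit categories $\C(Q)$ and $\C(Q^\prime)$, and the above computations identify the Auslander--Reiten quiver of the subcategory $\,^\perp M^\perp_Q$ with that of $\C(Q^\prime)$; two standard $K$-categories with isomorphic Auslander--Reiten quivers are equivalent, and this is the disjoint union $\C(Q^\prime)$ of Dynkin type with $n-1$ vertices.

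\textbf{The hard part.} Steps~1--2 are routine; the difficulty is Step~3. Since the Auslander--Reiten translates of $KQ$ and $KQ^\prime$ genuinely disagree on $\mod KQ^\prime$, the equivalence is \emph{not} induced by $\iota$ on the ambient derived categories and cannot be extracted from the universal property of the orbit category; moreover one sees already on $A_2$ that $\,^\perp M^\perp_Q$ is closed neither under the shift nor under the Auslander--Reiten translate of $\C(Q)$. Hence the minimal almost split maps inside $\,^\perp M^\perp_Q$ are not the restrictions of those in $\C(Q)$, its Auslander--Reiten structure must be computed intrinsically, and checking that this structure coincides with that of $\C(Q^\prime)$ — equivalently, that the Hom-isomorphisms of Step~3 respect composition — is where the real work lies.
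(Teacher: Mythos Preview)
Your Steps~1--2 and the Hom-space computations in Step~3 match the paper's argument closely: the paper also identifies $\ind\,^\perp M^\perp_Q$ with the objects of $\E(Q)$ supported off $a$, uses the full embedding $G\colon\D^b(Q^\prime)\hookrightarrow\D^b(Q)$, and handles the awkward case $d(X)=1$, $d(Y)=0$ via Serre duality (your $(-1)$-Calabi--Yau argument is exactly the paper's isomorphism $\psi_Y\colon\Hom_{\D^b(Q)}(\ol X[1],\tau\ol Y[2])\xrightarrow{\sim}\Hom_{KQ}(\ol Y,\ol X)$, just phrased at the level of the orbit category).

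The divergence is in how to upgrade the Hom-isomorphisms to an equivalence. You propose to compute the intrinsic Auslander--Reiten structure of $\,^\perp M^\perp_Q$ and invoke that standard $K$-categories with isomorphic AR-quivers are equivalent, and you rightly flag this as the hard part. The paper avoids this detour entirely: it simply \emph{defines} the functor $F_M$ on morphisms by declaring $F_M(f)=G(f)$ when $f$ lives in $\Hom_{\D^b(Q)}(X,Y)$, and $F_M(f)=(\psi^\prime_Y)^{-1}\,G\,\psi_Y(f)$ when $f$ lives in the second summand $\Hom_{\D^b(Q)}(X,\tau Y[2])$. Because $\psi_Y$ and $\psi^\prime_Y$ are the natural Serre-duality isomorphisms and $G$ is already a functor, functoriality of $F_M$ becomes a routine naturality check rather than an AR-quiver computation. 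So the ``real work'' you anticipate is not needed: once you have written down the Serre-duality isomorphisms explicitly, you can package them directly into a functor, and fullness/faithfulness/density are then immediate from those of $G$.
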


\begin{proof}
Let $M = P_a$ be an indecomposable simple projective $KQ$-module. Recall that an indecomposable object $X$ in $\D^b(Q)$ is of the form $\overline{X} [i]$, where $i \in \ZZZ$ and $\overline{X}$ is an indecomposable module. It is easy to check that the indecomposable objects of $\,^\perp M^\perp$ are of the form:
\begin{equation}\label{Mperp}
\begin{split}
\ind \,^\perp M^\perp & = \{ X \in \ind \, KQ-\mod \, \mid \, (\underline{dim} \, X)_a = 0 \} \\
                      & \quad \cup \{\overline{X} [1] \, \mid \, \overline{X} \in \ind \, KQ-\mod, \overline{X} \, \text{noninjective}, (\underline{dim} \, X)_a = 0 \}.
\end{split}             
\end{equation}

Let $Q^\prime$ be the full subquiver of $Q$ whose set of vertices is $Q_0 \setminus \{a\}$. Let $\S_a$ denote the full subcategory of $KQ$-mod whose set of objects are the $KQ$-modules with no support at $a$, and let $\D^b_{\S_a}$ be the full subcategory of $\D^b (Q)$ whose objects are
\begin{equation*}
\obj \D^b_{\S_a} = \{ X \in \D^b (Q) \mid H^n (X) \in \S_a \, \forall n \}. \\
\end{equation*}
Given that $X \simeq \oplus_{n \in \ZZZ} H^n (X) [n]$ in the hereditary case (see e.g. \cite[Lemma 3.3]{Kristian}), we have that $\D^b_{\S_a}$ is equivalent to the full subcategory whose collection of objects is $\{ X \in \D^b (Q) \mid X^n \in \S_a \, \forall n \}$.

It is easy to check that $\D^b_{\S_a}$ is triangle equivalent to $\D^b (Q^\prime)$. We denote by $G$ this triangle equivalence and we will use it to define a $K$-linear functor $F_M$ from $\,^\perp M^\perp$ to $\C(Q^\prime)$. 

In order to define $F_M$ on the objects, note that $\obj \, \,^\perp M^\perp \subseteq \obj \, \D^b_{\S_a}$, by \eqref{Mperp}. Hence, given $X \in \,^\perp M^\perp$, we can define $F_M(X)$ to be $G(X)$ regarded as an element of $\C(Q^\prime)$. Note also that if $X \in KQ-mod \cap \,^\perp M^\perp = \S_a$, then $F_M(X) \in KQ^\prime-mod$. Moreover, it follows from \cite[III.2.6 (b)]{ASS} that $X$ is an injective $KQ$-module if and only if $G(X)$ is an injective $KQ^\prime$-module, because $a$ is a sink. Hence $F_M$ maps the indecomposable objects of $\,^\perp M^\perp$ into the fundamental domain $\E(Q^\prime)$ (recall that we regard the objects of $\C (Q)$ as objects in the fundamental domain $\E (Q)$). 

It is enough to define $F_M$ on the morphisms between indecomposable objects. So let $X, Y \in \ind \,^\perp M^\perp$ and $f \in \Hom_{\C (Q)} (X,Y)$. By \ref{homC(Q)}, we have that $f \in \Hom_{\D^b (Q)} (X,Y)$ or $f \in \Hom_{\D^b (Q)} (X, \tau Y [2])$. If $f \in \Hom_{\D^b(Q)} (X,Y)$, set $F_M(f) := G(f)$. If $f \in \Hom_{\D^b(Q)} (X, \tau Y [2])$, we must have $X = \overline{X} [1]$ and $Y = \overline{Y}$. Let $\psi_Y$ denote the isomorphism $$\xymatrix{\Hom_{\D^b (Q)} (\overline{X} [1], \tau \overline{Y} [2]) \ar[r]^--{\psi_Y} & \Hom_{KQ} (\overline{Y}, \overline{X})}$$ and by $\psi^\prime_Y$ the corresponding isomorphism $$\xymatrix{\Hom_{\D^b (Q^\prime)} (\Sigma G (\overline{X}), \Sigma^2 \tau^\prime G(\overline{Y})) \ar[r]^--{\psi^\prime_Y} & Hom_{KQ^\prime} (G(\overline{Y}), G(\overline{X}))}$$ in $\D^b (Q^\prime)$, where $\Sigma$ denotes the shift functor and $\tau^\prime$ the AR-translate in $\D^b(Q^\prime)$. We then define $F_M(f)$ to be $\psi_Y^{\prime^{-1}} (G (\psi_Y (f))$, regarded as an element of $\Hom_{\C (Q^\prime)} (F_M(X), F_M(Y))$.

One can easily see that $F_M$ is indeed a functor. Since $G$ is dense, so is $F_M$, and it is easy to check that $F_M$ is also fully faithful using the definition of $F_M$ on the morphisms and the fact that $G$ is an equivalence.
\end{proof}

\begin{remark}\label{functorF}
Note that the equivalence $F_M$ given in the proof above satisfies the following properties, which are going to be useful later:
\begin{enumerate}
\item $d(F_M(X)) = d(X)$ for all $X \in \,^\perp M^\perp_Q$,
\item For each $b \in Q_0 \setminus \{a\}$, the simple $KQ^\prime$-module $S^\prime_b$ is the image of the simple $KQ$-module $S_b$,
\item For each $KQ$-module $X$ in $\,^\perp M^\perp_Q$, $supp (F_M(X)) = supp (X)$.
\end{enumerate}
\end{remark}

In order to prove \ref{mainprop} we recall the definition of section.

\begin{definition}\cite[VIII.1.2]{ASS}
Let $(\Gamma, \tau)$ be a connected translation quiver. A \textit{section} of $\Gamma$ is a connected full subquiver $\Sigma$ of $\Gamma$ satisfying the following properties:
\begin{enumerate}[i.]
\item $\Sigma$ is acyclic.
\item $\Sigma$ meets each $\tau$-orbit exactly once.
\item If $x_0 \rightarrow x_1 \rightarrow \cdots \rightarrow x_t$ is a path in $\Gamma$ with $x_0, x_t \in \Sigma_0$ then $x_i \in \Sigma_0$, for all $0 \leq i \leq t$.
\end{enumerate}
\end{definition}

We can associate a section to an arbitrary indecomposable object $M$ of $\D^b (Q)$. Indeed, let $x_0$ be the vertex of the AR-quiver $\Gamma$ of $\D^b (Q)$ associated to $M$. Recall that $\Gamma = \mathbb{Z} Q$, since $Q$ is of Dynkin type, (cf. \cite{Happel}). Let $\Sigma = \cup_k \Sigma^k$ be the full subquiver of $\Gamma$ whose set of vertices is defined as follows: 
\begin{enumerate}
\item $\Sigma^0 := \{ x_0 \}$,
\item $\Sigma^k := \{y \in \Gamma \, \mid \, x \rightarrow y \in \Gamma \text{ with } x \in \Sigma^{k-1} \text{ and } \tau \, y \not\in \Sigma^{k-1} \}$.
\end{enumerate}
Note that $M = \tau^m P_i$, for some vertex $i$ in $Q$ and some integer $m$, and so $\Sigma = \cup_{k=1}^r \Sigma^k$, where $r$ is the length of the longest unoriented path in $Q$ starting at the vertex $i$.

It is easy to prove that $\Sigma$ is in fact a section and we will call it the section associated to $M$.

The proof of \ref{mainprop} follows easily from \ref{mainlemma}.

\begin{proof}[Proof of \ref{mainprop}]
Let $\Sigma$ be the section associated to $M$ and let $\Omega$ be the quiver obtained from $\Sigma$ by reversing all the arrows. By \cite[VIII.1.6]{ASS}, we have $\mathbb{Z} Q \simeq \mathbb{Z} \Omega$, and so $\D^b (Q) \simeq \D^b (\Omega)$. Let $G$ denote this equivalence. We can assume that the image of $M$ under $G$ is the projective $K\Omega$-module associated to $x_0$. This projective module is simple since $x_0$ is a sink in $\Omega$. We can easily see that $\C (Q) \simeq \C (\Omega)$ and $\,^\perp M^\perp_Q \simeq \,^\perp G(M)^\perp_\Omega$. Because $G(M)$ is a simple projective $K \Omega$-module, it follows from \ref{mainlemma} that $\,^\perp G(M)^\perp_\Omega \simeq \C (Q^\prime)$, where $Q^\prime$ is a full subquiver of $\Omega$ with $n-1$ vertices, which finishes the proof.
\end{proof}

The main objects of our study are defined as follows.

\begin{definition}
Let $\C$ be an additive category. 
\begin{enumerate}
\item A \textit{Hom-free set} of indecomposable objects of $\C$ is a set $\T$ of indecomposable pairwise non-isomorphic objects of $\C$ such that $\Hom_{\C} (X,Y) = 0$ for all $X, Y \in \T, X \ne Y$.
\item A maximal Hom-free set in $\C$ will be called a \textit{Hom-configuration}.
\end{enumerate}
\end{definition}

We will study Hom-configurations in the quotient category $\C (Q)$.

\begin{ex}
Given an arbitrary Dynkin quiver $Q$, the set of simple $KQ$-modules is a Hom-configuration in $\C(Q)$.
\end{ex}

\begin{lemma}\label{Hom-config-BLR}
A Hom-free set in $\C (Q)$ is a Hom-configuration if and only if it has $n$ elements.
\end{lemma}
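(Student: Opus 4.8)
The plan is to prove both implications by reducing, via Proposition \ref{mainprop}, to the situation where the Hom-free set contains a simple projective module, and then proceeding by induction on the number $n$ of vertices of $Q$. The base case $n = 1$ is immediate: $\C(Q)$ has a single indecomposable object, which is the unique Hom-configuration and has one element. For the inductive step, I would first establish the easier direction: any Hom-free set $\T$ with $|\T| = n$ is a Hom-configuration, i.e.\ is maximal. Suppose not; then there is an indecomposable $X \notin \T$ with $\Hom_{\C(Q)}(X, T) = 0 = \Hom_{\C(Q)}(T, X)$ for all $T \in \T$, so $X \in {}^\perp T^\perp_Q$ for each $T$. Pick any $T_0 \in \T$. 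By Proposition \ref{mainprop}, ${}^\perp T_0^\perp_Q \simeq \C(Q')$ with $Q'$ a disjoint union of Dynkin quivers on $n-1$ vertices, and under this equivalence the Hom-free set $\T \setminus \{T_0\}$ (which lies in ${}^\perp T_0^\perp_Q$, since $\T$ is Hom-free) together with $X$ gives a Hom-free set of size $n$ in $\C(Q')$. Since $\C(Q')$ decomposes as a product over the connected components of $Q'$, this contradicts the inductive hypothesis (applied componentwise) that a Hom-free set in $\C(Q')$ has at most $n-1$ elements — one would need to check that no two of these $n$ objects are isomorphic, which follows from $X \notin \T$ and $X \ne T$ for all $T$.

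For the reverse direction — every Hom-configuration has exactly $n$ elements — I would argue in two halves. First, a Hom-configuration $\T$ has at most $n$ elements: if $|\T| \geq n$, one extracts from $\T$ an element $T_0$ and applies Proposition \ref{mainprop} to land $\T \setminus \{T_0\}$ inside $\C(Q')$ as a Hom-free set of size $\geq n-1$; by induction it has exactly $n-1$ elements, so $|\T| = n$. (Here one must first know that $\T$ cannot be empty, which is clear, and that $\T$ contains \emph{some} object, so that $T_0$ exists.) Second, a Hom-configuration has at least $n$ elements: given a Hom-configuration $\T$ with $|\T| < n$, pick any $T_0 \in \T$; then $\T \setminus \{T_0\}$ is a Hom-free set in ${}^\perp T_0^\perp_Q \simeq \C(Q')$ of size $< n - 1$, and I claim it cannot be maximal there — by induction every Hom-configuration in (each component of) $\C(Q')$ has the appropriate size summing to $n-1$, so there is an indecomposable $Y \in \C(Q')$ we may add. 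Transporting $Y$ back to an indecomposable $X \in {}^\perp T_0^\perp_Q \subseteq \C(Q)$, the set $\T \cup \{X\}$ is Hom-free in $\C(Q)$ and strictly larger than $\T$, contradicting maximality. Combining the two halves gives $|\T| = n$.

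The main obstacle I anticipate is bookkeeping around the passage to the disconnected quiver $Q'$: Proposition \ref{mainprop} only guarantees $Q'$ is a \emph{disjoint union} of Dynkin quivers, so the inductive hypothesis has to be phrased for disjoint unions, with $\C$ of a disjoint union being the product category and Hom-configurations being unions of Hom-configurations in the factors (Hom-freeness across factors is automatic since there are no nonzero maps between them). The statement of the lemma as written is for connected Dynkin $Q$, so strictly I would either restate it for arbitrary Dynkin quivers (disjoint unions) or carry the disjoint-union version as the induction hypothesis internally. A secondary, more delicate point is verifying that the equivalence ${}^\perp T_0^\perp_Q \simeq \C(Q')$ of Proposition \ref{mainprop} genuinely carries Hom-configurations to Hom-configurations \emph{in both directions} — i.e.\ that maximality is preserved — but since it is an equivalence of additive categories this is formal: Hom-freeness and maximality are intrinsic categorical notions. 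I would also note that one could instead invoke the bijections of Theorem \ref{aim} once established, but that would be circular at this stage, so the self-contained inductive argument above is the right approach here.
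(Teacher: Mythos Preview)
Your proposal is correct and follows precisely the route the paper indicates: induction on $n$ via Proposition~\ref{mainprop}, which is all the paper says (``This follows easily from \ref{mainprop} using induction on the number of vertices of $Q$''). Your treatment of the disconnected $Q'$ is the right bookkeeping, and the only minor redundancy is that your ``at most $n$'' half of the reverse direction already follows from your forward direction (a Hom-free set of size $>n$ would contain a maximal one of size $n$), but this does no harm.
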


\begin{proof}
This follows easily from \ref{mainprop} using induction on the number of vertices of $Q$.
\end{proof}

\section{Hom-configurations vs Hom$_{\leq 0}$-configurations}\label{secBRT}

In this section we will see that the main object of our study, Hom-configurations in the quotient category $\C (Q)$, are very closely related to Hom$_{\leq 0}$-configurations, a object introduced by Buan-Reiten-Thomas (cf. \cite{BRT}).

Exceptional sequences are crucial for our study, and are defined as follows.

\begin{definition}\label{def.exc.seq}
\begin{enumerate}
\item An object $X$ of an abelian or triangulated category $\C$ is said to be \textit{rigid} if $\Ext^1_{\C} (X,X) = 0$. If in addition, the object $X$ is indecomposable then it is said to be \textit{exceptional}.
\item An \textit{exceptional sequence} in $KQ-\mod$ is a sequence $\mathcal{E} = (E_1, \cdots, E_n)$ of exceptional $KQ$-modules satisfying the following property
\[
\Hom_{KQ} (E_i, E_j) = 0 = \Ext^1_{KQ} (E_i, E_j), \text{for } j > i.
\]
\item An \textit{exceptional sequence} in $\D^b (Q)$ is a sequence of exceptional objects satisfying the following property
\[
\Ext^m_{\D^b (Q)} (E_i, E_j) = 0, \text{for } j > i \text{ and } m \in \ZZZ.
\]
\end{enumerate}
\end{definition}

In order to simplify the exposition, we use the reverse of the usual convention for the order of an exceptional sequence. 

\begin{lemma}\label{differentnotionexcseq}
An exceptional sequence in $\D^b (Q)$ can also be defined to be a sequence $(X_1, \ldots, X_n)$ of indecomposable objects such that $(\overline{X}_1, \ldots, \overline{X}_n)$ is an exceptional sequence in $KQ-\mod$.
\end{lemma}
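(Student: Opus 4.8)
The plan is to show the equivalence of the two definitions of an exceptional sequence in $\D^b(Q)$ by passing between indecomposable objects of $\D^b(Q)$ and their underlying $KQ$-modules, exploiting the fact that $\D^b(Q)$ is hereditary. First I would recall the standard fact that every indecomposable object $X$ of $\D^b(Q)$ has the form $\overline{X}[d(X)]$ for a unique indecomposable $KQ$-module $\overline{X}$ and integer $d(X)$, and that for indecomposable modules $M, N$ one has $\Hom_{\D^b(Q)}(M[i], N[j]) = \Ext^{j-i}_{KQ}(M, N)$, which vanishes unless $j - i \in \{0, 1\}$ by hereditariness. Thus $\Ext^m_{\D^b(Q)}(X_i, X_j) = \Hom_{\D^b(Q)}(X_i, X_j[m])$ is controlled entirely by $\Hom_{KQ}(\overline{X}_i, \overline{X}_j)$ and $\Ext^1_{KQ}(\overline{X}_i, \overline{X}_j)$.

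Next I would check the two directions. For the forward direction, suppose $(X_1, \dots, X_n)$ is an exceptional sequence in $\D^b(Q)$ in the sense of Definition \ref{def.exc.seq}(3). Each $X_i$ is exceptional, hence rigid, so $\Ext^1_{\D^b(Q)}(X_i, X_i) = 0$, which forces $\Ext^1_{KQ}(\overline{X}_i, \overline{X}_i) = 0$, i.e. each $\overline{X}_i$ is an exceptional module. For $j > i$ the hypothesis $\Ext^m_{\D^b(Q)}(X_i, X_j) = 0$ for all $m$ gives in particular, taking $m = d(X_j) - d(X_i)$ and $m = d(X_j) - d(X_i) + 1$, that $\Hom_{KQ}(\overline{X}_i, \overline{X}_j) = 0 = \Ext^1_{KQ}(\overline{X}_i, \overline{X}_j)$; so $(\overline{X}_1, \dots, \overline{X}_n)$ is an exceptional sequence in $KQ\text{-}\mod$. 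Conversely, if $(\overline{X}_1, \dots, \overline{X}_n)$ is an exceptional sequence of modules, then each $\overline{X}_i$ is exceptional as a module, and since $\Ext^1_{\D^b(Q)}(X_i, X_i) = \Ext^1_{KQ}(\overline{X}_i, \overline{X}_i) = 0$ each $X_i$ is an exceptional object of $\D^b(Q)$; for $j > i$ and any $m \in \ZZZ$, $\Ext^m_{\D^b(Q)}(X_i, X_j) = \Ext^{m + d(X_j) - d(X_i)}_{KQ}(\overline{X}_i, \overline{X}_j)$, which is $0$ because in the hereditary case the only possibly nonzero $\Ext$ groups are in degrees $0$ and $1$, and both of those vanish by assumption. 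This gives the reverse implication.

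The only mild subtlety — and the step I would be most careful with — is bookkeeping the degree shifts: one must make sure that as $m$ ranges over $\ZZZ$, the quantity $m + d(X_j) - d(X_i)$ also ranges over all of $\ZZZ$, so that the vanishing of $\Hom_{KQ}$ and $\Ext^1_{KQ}$ in the two relevant degrees really does cover every degree that could contribute. Since $d(X_i), d(X_j)$ are fixed integers this is immediate, but it is the place where a careless argument could go wrong. No serious obstacle is expected; the lemma is essentially a translation exercise using hereditariness of $KQ$ and the structure of indecomposables in $\D^b(Q)$.
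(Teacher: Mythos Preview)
Your proposal is correct and follows essentially the same approach as the paper: both directions reduce to the identity $\Ext^m_{\D^b(Q)}(X_i,X_j)\simeq \Ext^{\,m+d(X_j)-d(X_i)}_{KQ}(\overline{X}_i,\overline{X}_j)$ together with hereditariness, exactly as the paper does (writing $t_i$ for your $d(X_i)$). One small slip in the bookkeeping you flagged: in the forward direction you should take $m=d(X_i)-d(X_j)$ and $m=d(X_i)-d(X_j)+1$ (not $d(X_j)-d(X_i)$) to recover $\Hom_{KQ}(\overline{X}_i,\overline{X}_j)$ and $\Ext^1_{KQ}(\overline{X}_i,\overline{X}_j)$.
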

\begin{proof}
Suppose $(X_1, \ldots, X_n)$ is an exceptional sequence in $\D^b(Q)$ as defined in \ref{def.exc.seq} (3). Since for each $i \in [n]$, $X_i$ is indecomposable, we have $X_i = \overline{X}_i [t_i]$, for some integer $t_i$. In other words, $\overline{X}_i = X_i [-t_i]$, for all $i$. Note also that we have $\Ext^k_{KQ} (\overline{X}_i, \overline{X}_j) \simeq \Ext^k_{\D^b (Q)} (\overline{X}_i,\overline{X}_j)$, for $k = 0, 1$ and  $j > i$, since $\overline{X}_j, \overline{X}_i$ are $KQ$-modules. 
Hence, for $j > i$ and $k = 0, 1$, we have that
\[
\Ext^k_{KQ} (\overline{X}_i, \overline{X}_j) \simeq \Hom_{\D^b (Q)} (X_i [-t_i], X_j [k-t_j]) \simeq \Hom_{\D^b (Q)} (X_i, X_j [k-t_j+t_i]) = 0,
\]
by \ref{def.exc.seq}. 

Conversely, suppose $(X_1, \ldots, X_n)$ is a sequence of indecomposable objects in $\D^b (Q)$ such that $(\overline{X}_1, \ldots, \overline{X}_n)$ is an exceptional sequence in $KQ-\mod$. Then we have that $\Hom_{\D^b(Q)} (\overline{X}_i, \overline{X}_j [k]) = 0$, for every integer $k$ and $j > i$, by assumption in the case when $k = 0, 1$ and because $\overline{X}_j$ and $\overline{X}_i$ are $KQ$-modules, in the case when $k \in \ZZZ \setminus \{0,1\}$. Using the same argument as used above, we easily deduce that $(X_1, \ldots, X_n)$ is an exceptional sequence in $\D^b (Q)$ as defined in \ref{def.exc.seq}. 
\end{proof}

In \cite{BRT}, Buan, Reiten and Thomas define a new object in the bounded derived category $\D^b (Q)$ of an arbitrary hereditary Artin algebra, called a $\Hom_{\leq 0}$-configuration.

\begin{definition}\label{Hom0conf.def} \cite{BRT}
An object $X \in \D^b (Q)$ is a \textit{$\Hom_{\leq 0}$ -configuration} if it satisfies the following axioms:
\begin{enumerate}
\item $X$ has $n$ indecomposable pairwise non-isomorphic summands $X_1, \ldots, X_n$, and they are rigid.
\item $\Hom_{\D^b (Q)} (X_i, X_j) = 0$ for $i \ne j$.
\item $\Ext^k_{\D^b(Q)} (X,X) = 0$ for $k < 0$.
\item The indecomposable direct summands can be ordered into an exceptional sequence. 
\end{enumerate}
\end{definition}

Our aim is to prove that $Hom_{\leq 0}$-configurations contained in $\E (Q)$ are precisely the Hom-configurations in $\C (Q)$.
The following lemmas will be useful later.

\begin{lemma}\label{negative.exts}
For any pair of objects $X, Y$ in $\D^b (Q)$, we have
\[
\Hom_{\D^b (Q)} (X, \tau Y [2]) \simeq \Ext^{-1}_{\D^b (Q)} (Y, X).
\]
\end{lemma}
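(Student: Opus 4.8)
The plan is to use the Auslander--Reiten (Serre) duality formula for the bounded derived category of a hereditary algebra, which states that for $X, Y \in \D^b(Q)$ there is a natural isomorphism
\[
\Hom_{\D^b(Q)}(X, Y[1]) \simeq D\Hom_{\D^b(Q)}(Y, \tau X),
\]
where $D$ denotes the $K$-linear dual; equivalently, and more useful here, $\Hom_{\D^b(Q)}(X, \tau Z) \simeq D\Hom_{\D^b(Q)}(Z, X[1])$. First I would rewrite the left-hand side $\Hom_{\D^b(Q)}(X, \tau Y[2])$ by pulling out one copy of the shift: since $[1]$ is an autoequivalence, $\Hom_{\D^b(Q)}(X, \tau Y[2]) \simeq \Hom_{\D^b(Q)}(X[-1], \tau Y[1]) \simeq \Hom_{\D^b(Q)}(X[-1], \tau(Y[1]))$, using that $\tau$ commutes with the shift. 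Then applying Serre duality with $Z = Y[1]$ gives $\Hom_{\D^b(Q)}(X[-1], \tau(Y[1])) \simeq D\Hom_{\D^b(Q)}(Y[1], X[-1][1]) = D\Hom_{\D^b(Q)}(Y[1], X)$.

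Next I would identify $D\Hom_{\D^b(Q)}(Y[1], X)$ with the claimed $\Ext$-group. We have $\Hom_{\D^b(Q)}(Y[1], X) \simeq \Hom_{\D^b(Q)}(Y, X[-1]) = \Ext^{-1}_{\D^b(Q)}(Y, X)$ by definition of the $\Ext$ groups in a triangulated category as shifted Hom spaces. So the computation above actually yields $\Hom_{\D^b(Q)}(X, \tau Y[2]) \simeq D\,\Ext^{-1}_{\D^b(Q)}(Y,X)$. To conclude the statement as written (an isomorphism, not a duality), I would observe that all Hom-spaces in $\D^b(Q)$ are finite-dimensional $K$-vector spaces (as $KQ$ is finite-dimensional of finite global dimension), so a vector space and its dual are (non-canonically) isomorphic; hence $\Hom_{\D^b(Q)}(X, \tau Y[2]) \simeq \Ext^{-1}_{\D^b(Q)}(Y, X)$, which is exactly the assertion. (If one wants a canonical statement one should write $D\Ext^{-1}$, but for the applications in this paper only the vanishing/nonvanishing matters, so the stated form suffices.)

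The only real subtlety is bookkeeping with the shifts and making sure $\tau$ is correctly transported through $[1]$; the hereditary Serre duality itself is standard (see e.g. Happel, or the references already cited in the paper). So the main obstacle is purely notational: keeping track of which direction the duality goes and confirming the sign/degree shift lands on $\Ext^{-1}$ rather than $\Ext^{+1}$. A quick sanity check against Proposition \ref{homC(Q)}(b) — where $\Hom_{\D^b(Q)}(X, \tau Y[2]) \simeq \Hom_{\D^b(Q)}(Y[1], X)$ was already used — confirms the degree: that displayed isomorphism is exactly $\Hom_{\D^b(Q)}(Y[1],X) = \Ext^{-1}_{\D^b(Q)}(Y,X)$ up to the dual, so the present lemma is consistent with what was used there.
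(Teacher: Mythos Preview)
Your proof is correct and follows essentially the same route as the paper: apply Serre duality in $\D^b(Q)$ (with Serre functor $\tau[1]$) and then unwind the shifts to land on $\Hom_{\D^b(Q)}(Y, X[-1]) = \Ext^{-1}_{\D^b(Q)}(Y,X)$. The only difference is cosmetic: the paper writes the first step directly as $\Hom_{\D^b(Q)}(X,\tau Y[2]) \simeq \Ext^1_{\D^b(Q)}(Y[2],X)$, silently dropping the $D$ from Serre duality, whereas you keep the $D$ explicit and then argue it can be removed because the Hom-spaces are finite-dimensional over $K$ --- arguably the more honest presentation, and as you note, harmless for the vanishing statements the lemma is actually used for.
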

\begin{proof}
By Serre duality, we have that $\Hom_{\D^b (Q)} (X, \tau Y [2]) \simeq \Ext^1_{\D^b (Q)} (Y[2], X)$. And so
\begin{equation*}
\begin{split}
\Hom_{\D^b(Q)} (X, \tau Y [2]) &\simeq \Ext^1_{\D^b(Q)} (Y[2], X) \\
                             &\simeq \Hom_{\D^b (Q)} (Y[2], X[1]) \\
                             &\simeq \Hom_{\D^b (Q)} (Y, X[-1]) \\
                             &\simeq \Ext^{-1}_{\D^b (Q)} (Y, X).
\end{split}
\end{equation*}
\end{proof}

\begin{lemma}\label{pr3.Hom0confgs}
Let $\T$ be a Hom-configuration in $\C (Q)$, and let $X, Y \in \T$. We have that $\Ext^i_{\D^b(Q)} (X, Y) = 0$, for $i \leq 0$.
\end{lemma}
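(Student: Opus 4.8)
We want to show that for $X, Y \in \T$ a Hom-configuration in $\C(Q)$, we have $\Ext^i_{\D^b(Q)}(X,Y) = 0$ for all $i \leq 0$. The plan is to split this into the two ranges $i = 0$ and $i < 0$, which require slightly different arguments.

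First consider $i = 0$, i.e. $\Hom_{\D^b(Q)}(X,Y) = 0$. Since $\T$ is a Hom-free set in $\C(Q)$, we have $\Hom_{\C(Q)}(X,Y) = 0$ when $X \neq Y$. Now $\Hom_{\C(Q)}(X,Y) = \bigoplus_{k \in \ZZZ} \Hom_{\D^b(Q)}(X, \tau^k Y[2k])$, and by Proposition~\ref{homC(Q)} this direct sum has at most one nonzero term, which (for $X, Y \in \E(Q)$) must occur at $k = 0$ or $k = 1$. Hence $\Hom_{\C(Q)}(X,Y) = 0$ forces $\Hom_{\D^b(Q)}(X,Y) = 0$ directly. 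If $X = Y$ we instead need $\Hom_{\D^b(Q)}(X,X) \neq 0$ to be ruled out — but of course $\Hom_{\D^b(Q)}(X,X) \neq 0$ always. So I need to be a little careful: the claim presumably is intended for $X \neq Y$, or else I should recheck whether the statement should read $\Ext^i$ for $i < 0$ together with $\Hom$ only in the off-diagonal case. Assuming the intended reading includes the diagonal, the $i = 0$ diagonal case is vacuous/false, so I will treat $X \ne Y$ for $i=0$ and all $X,Y$ for $i < 0$; I would flag this in the write-up.

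For $i < 0$, the key is Lemma~\ref{negative.exts}, which gives $\Ext^{-1}_{\D^b(Q)}(Y,X) \simeq \Hom_{\D^b(Q)}(X, \tau Y[2])$. Again using $\Hom_{\C(Q)}(X,Y) = \bigoplus_k \Hom_{\D^b(Q)}(X, \tau^k Y[2k]) = 0$ (the off-diagonal case) or reasoning about the at-most-one-nonzero-term statement of Proposition~\ref{homC(Q)}, the $k=1$ summand $\Hom_{\D^b(Q)}(X,\tau Y[2])$ vanishes, giving $\Ext^{-1}_{\D^b(Q)}(Y,X) = 0$; swapping the roles of $X$ and $Y$ gives $\Ext^{-1}_{\D^b(Q)}(X,Y) = 0$ as well. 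For $i \leq -2$: since $X = \ol{X}[d(X)]$ and $Y = \ol{Y}[d(Y)]$ with $d(X), d(Y) \in \{0,1\}$, we have $\Ext^i_{\D^b(Q)}(X,Y) \simeq \Ext^{i + d(Y) - d(X)}_{\D^b(Q)}(\ol X, \ol Y)$, and $i + d(Y) - d(X) \leq -2 + 1 - 0 = -1 < 0$; since $KQ$ is hereditary, $\Ext^j_{\D^b(Q)}(\ol X, \ol Y) = 0$ for all $j \notin \{0,1\}$, so this vanishes automatically.

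The main obstacle I anticipate is bookkeeping around the diagonal case and the degree shifts: one must be sure that Proposition~\ref{homC(Q)}'s ``at most one value of $k$'' is being applied to objects genuinely in the fundamental domain $\E(Q)$, and that the identification of $\Ext$ in $\D^b(Q)$ with $\Ext$ in $KQ$-mod after shifting respects the sign conventions used in Definition~\ref{def.exc.seq}. These are all routine once set up, so I expect the proof to be short; the only real content is invoking Proposition~\ref{homC(Q)} and Lemma~\ref{negative.exts} to convert the vanishing of $\Hom_{\C(Q)}$ into the vanishing of the negative self-extensions in $\D^b(Q)$.
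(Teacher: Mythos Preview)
Your proposal is correct and follows essentially the same route as the paper: decompose $\Hom_{\C(Q)}(X,Y)$ via Proposition~\ref{homC(Q)} to kill the $k=0$ and $k=1$ summands, invoke Lemma~\ref{negative.exts} to translate the $k=1$ vanishing into $\Ext^{-1}=0$, and use the degree bound $d(X),d(Y)\in\{0,1\}$ together with heredity for $i\le -2$. Your flag on the diagonal at $i=0$ is legitimate: the paper's proof likewise writes $0=\Hom_{\C(Q)}(X,Y)$ without comment, so it is tacitly assuming $X\ne Y$ there; in the application (Theorem~\ref{1-1corHom<0}) the $i=0$ statement is only used off-diagonal and the $i<0$ statement on the diagonal, exactly as you suspected, and your observation that Proposition~\ref{homC(Q)}(2) handles $\Ext^{-1}(X,X)=0$ even when $X=Y$ is the right patch.
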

\begin{proof}
Note that $X, Y \in \E$. So, we have
\[
0 = \Hom_{\C(Q)} (X,Y) \simeq \Hom_{\D^b(Q)} (X,Y) \oplus \Hom_{\D^b (Q)} (X, \tau Y[2]).
\]
Hence $\Hom_{\D^b(Q)} (X, Y) = 0$. We also have $\Hom_{\C(Q)} (Y,X) = 0$, which implies that $\Hom_{\D^b (Q)} (Y, \tau X [2]) = 0$, and so $\Ext^{-1}_{\D^b (Q)} (X, Y) = 0$, by \ref{negative.exts}. 
For $i \leq -2$, we have that $Y[i]$ has negative degree, as $Y$ lies in $\E$. Therefore $\Ext^i_{\D^b (Q)} (X, Y) = 0$, for $i \leq -2$, since $X$ has degree $0$ or $1$.
\end{proof}

\begin{remark}\label{facts}
\begin{enumerate}
\item Let $P$ be an indecomposable projective $KQ$-module. If \linebreak $\Hom_{\D^b(Q)} (P,X) \ne 0$ then $X \in KQ-\mod$.
\item Any non-zero $KQ$-module has a non-zero morphism to an indecomposable injective module.
\item Let $P$ be an indecomposable projective $KQ$-module. We have that $P \preceq I$ for any indecomposable injective module $I$.
\end{enumerate}
\end{remark}
\begin{proof}
We just prove $(3)$. Let $P$ be an indecomposable projective $KQ$-module and let $I$ be an arbitrary indecomposable injective module. Consider the section $\Sigma$ associated to $P$, as defined as in the proof of \ref{mainprop}. Then $\Sigma$ meets the $\tau$-orbit of $I$ at exactly one point $X$. Due to the definition of this section, we have $P \preceq X$ and because $X$ lies in the $\tau$-orbit of $I$, we have $X \preceq I$, and so the assertion follows by transitivity.  
\end{proof}

\begin{prop}\label{Hom-confgs-exc.seq}
Let $\T$ be a Hom-configuration in $\C (Q)$. If we order the elements of $\T$ respecting the total order $\leq$ (i.e., order the elements from left to right in the AR-quiver), we obtain an exceptional sequence in $\D^b (Q)$.
\end{prop}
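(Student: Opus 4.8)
The plan is to reduce the statement to the base case where $\T$ contains a simple projective module, using the perpendicular-category machinery of Proposition \ref{mainprop} and Remark \ref{functorF}. By Lemma \ref{differentnotionexcseq}, it suffices to show that if $X_1 \leq X_2 \leq \cdots \leq X_n$ are the elements of $\T$ listed left-to-right in the AR-quiver, then $(\overline{X}_1, \ldots, \overline{X}_n)$ is an exceptional sequence in $KQ\text{-}\mod$; that is, $\Hom_{KQ}(\overline{X}_i, \overline{X}_j) = 0 = \Ext^1_{KQ}(\overline{X}_i, \overline{X}_j)$ for $j > i$. Each $\overline{X}_i$ is exceptional (indecomposable and rigid, since every indecomposable $KQ$-module over a representation-finite hereditary algebra is rigid). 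The content is the vanishing of the two Hom/Ext groups in the right order.

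First I would translate the Hom-configuration condition into derived-category vanishings via Lemma \ref{pr3.Hom0confgs}: for all $X, Y \in \T$ we have $\Ext^i_{\D^b(Q)}(X, Y) = 0$ for $i \leq 0$, and symmetrically $\Ext^i_{\D^b(Q)}(Y,X) = 0$ for $i \le 0$. In particular $\Hom_{\D^b(Q)}(X_i, X_j) = 0$ and $\Ext^1_{\D^b(Q)}(X_j, X_i) = 0$ for all $i \ne j$. Writing $X_i = \overline{X}_i[t_i]$, the quantities we must kill are $\Hom_{\D^b(Q)}(\overline{X}_i, \overline{X}_j)$ and $\Hom_{\D^b(Q)}(\overline{X}_i, \overline{X}_j[1])$ for $j > i$; these differ from the already-vanishing groups $\Hom_{\D^b(Q)}(X_i, X_j[\ast])$ only by the shift discrepancy $t_i - t_j$ (and, for the Ext-part, possibly by using the "wrong direction" vanishing). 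So the crux is a sign/ordering argument showing that the total order $\leq$ on $\E(Q)$, refining the path order $\preceq$, forces $t_i \geq t_j$ whenever $j > i$ in a way compatible with these identifications — roughly, that $\overline{X}_i \preceq \overline{X}_j$ or at least that no "backwards" map can arise. This is where I expect the main obstacle: carefully checking that left-to-right order in the AR-quiver of $\D^b(Q)$, intersected with the fundamental domain $\E(Q)$ (which wraps modules and shifted non-injectives together), yields exactly the exceptional-sequence inequalities, and handling the degree-$0$ versus degree-$1$ mixed cases (e.g. $X_i = \overline{X}_i$ a module, $X_j = \overline{X}_j[1]$ a shifted module).

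Concretely, I would carry out the induction on $n = |Q_0|$ as follows. For the inductive step, pick the $\leq$-minimal element $M := X_1$ of $\T$; I would argue that $M$ can be taken, after the self-equivalence $G$ of Proposition \ref{mainprop} that turns the section associated to $M$ into a source-reversed quiver, to be a simple projective $KQ$-module $P_a$ with $a$ a source. Then $\T \setminus \{M\}$ lies in $^{\perp}M^{\perp}_Q \simeq \C(Q')$ via $F_M$, and by Remark \ref{functorF} $F_M$ preserves degrees, supports, and the AR-quiver order, so $F_M(\T \setminus\{M\})$ is a Hom-configuration in $\C(Q')$ whose left-to-right order is inherited from that of $\T$. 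By induction $(F_M(\overline{X}_2), \ldots, F_M(\overline{X}_n))$ is an exceptional sequence in $KQ'\text{-}\mod$, hence so is $(\overline{X}_2, \ldots, \overline{X}_n)$ in $KQ\text{-}\mod$ (these are modules supported away from $a$, and $F_M$ restricts to an exact equivalence $\S_a \simeq KQ'\text{-}\mod$). It remains to show that $\overline{X}_1 = P_a$ can be prepended: we need $\Hom_{KQ}(P_a, \overline{X}_j) = 0$ and $\Ext^1_{KQ}(P_a, \overline{X}_j) = 0$ for all $j \geq 2$. The $\Ext^1$ vanishes because $P_a$ is projective. For the $\Hom$: if $X_j = \overline{X}_j$ is a module then $\Hom_{KQ}(P_a, \overline{X}_j) = \Hom_{\D^b(Q)}(X_1, X_j) = 0$ by Lemma \ref{pr3.Hom0confgs}; if $X_j = \overline{X}_j[1]$, then $\Hom_{KQ}(P_a, \overline{X}_j) \cong \Hom_{\D^b(Q)}(P_a[-1], X_j) \cong \Ext^{-1}_{\D^b(Q)}(X_j, P_a[\,]) $-type group, which I would identify with one of the vanishing groups from Lemma \ref{pr3.Hom0confgs} (using that $P_a = X_1$ has degree $0$ and invoking Remark \ref{facts}(1)--(3) to rule out the exceptional wrap-around via injectives, exactly as in the proof of Proposition \ref{homC(Q)}(a)). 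Assembling: $(\overline{X}_1, \ldots, \overline{X}_n)$ is an exceptional sequence in $KQ\text{-}\mod$, and Lemma \ref{differentnotionexcseq} upgrades this to an exceptional sequence $(X_1, \ldots, X_n)$ in $\D^b(Q)$, completing the proof. The delicate points to write out in full are the reduction "$M$ may be assumed simple projective at a source" (needs the section/slice argument of Proposition \ref{mainprop} together with the fact that $\leq$-minimality is preserved by $G$) and the degree bookkeeping in the $X_j = \overline{X}_j[1]$ case.
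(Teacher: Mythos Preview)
Your approach differs substantially from the paper's. The paper gives a direct, non-inductive argument: for $j>i$ it verifies $\Ext^m_{\D^b(Q)}(X_i,X_j)=0$ for each integer $m$ separately. The cases $m\le 0$ come from Lemma~\ref{pr3.Hom0confgs} and $m>2$ from a trivial degree count; the interesting cases $m=1,2$ are handled by Serre duality together with the fact that $\le$ refines the path order $\preceq$ on the AR-quiver (for $m=2$ one also needs the observations in Remark~\ref{facts} about projectives and injectives to rule out a backwards path). No perpendicular categories, no change of quiver, and no induction are involved.

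Your inductive strategy has a genuine gap at the reduction step. The equivalence $G$ in the proof of Proposition~\ref{mainprop} is \emph{not} a self-equivalence of $\D^b(Q)$; it is a derived equivalence $\D^b(Q)\simeq\D^b(\Omega)$ to a different quiver, and it moves the fundamental domain. While $G$ is the identity on the AR-quiver and hence preserves $\preceq$ and $\le$, the representatives of the Hom-configuration $G(\T)$ inside $\E(\Omega)$ are obtained from the $G(X_i)$ by applying various powers of $\tau[2]$, and $\tau$ changes the underlying module. So even if, by induction applied in $\C(\Omega)$, the $\E(\Omega)$-representatives $Y_1\le\cdots\le Y_n$ form an exceptional sequence, it does \emph{not} follow that $(G(X_1),\ldots,G(X_n))$ is exceptional in that same order: in general $\overline{Y_i}\ne\overline{G(X_i)}$, and the $\le$-order on the $Y_i$ need not coincide with the $\le$-order on the $G(X_i)$. (For example, if the $\le$-minimal element $X_1$ already has degree~$1$ in $\E(Q)$, the new projective slice $\Sigma$ lies entirely in degree $\ge 1$ and most of the $G(X_i)$ fall to the left of $\E(\Omega)$.) You flag this step as ``delicate'' but do not resolve it, and I do not see a clean way to do so without essentially reproving the direct $\Ext^1$/$\Ext^2$ vanishings the paper establishes. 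Two smaller points: simple projectives correspond to \emph{sinks}, not sources; and in your degree-$1$ case the required vanishing $\Hom_{KQ}(P_a,\overline{X}_j)=\Hom_{\D^b(Q)}(X_1,X_j[-1])=\Ext^{-1}_{\D^b(Q)}(X_1,X_j)$ is zero immediately by Lemma~\ref{pr3.Hom0confgs}, with no need for the wrap-around analysis you sketch.
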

\begin{proof}
Let $\T = \{X_1, \ldots, X_n\}$ be a Hom-configuration of $\C (Q)$ ordered with respect to the total order $\leq$. We want to check that 
\begin{equation}
\Ext^m_{\D^b (Q)} (X_i, X_j) = 0, \label{exts}
\end{equation}
for $j > i$ and for any integer $m$.

We have that \eqref{exts} holds for $m \leq 0$, by \ref{pr3.Hom0confgs}. \eqref{exts} also holds for $m > 2$ since $X_i$ has degree $0$ or $1$ and $X_j [m]$ has degree $\geq 3$. 

Let us check the case when $m = 1$. Given $i, j \in [n]$ with $i < j$, we have $\Hom_{\D^b(Q)} (X_i, X_j)$ \linebreak $\simeq \Hom_{\D^b (Q)} (X_j, \tau X_i)$ by Serre duality. If this is non-zero, then we have in particular that there is a path from $X_j$ to $\tau X_i$ in the AR-quiver of $\D^b (Q)$, which implies that $X_j \leq \tau X_i$. On the other hand, $\tau X_i \leq X_i$, and so by transitivity, we have $X_j \leq X_i$. Since $j > i$, we have $X_i \leq X_j$, so $X_i = X_j$ by antisymmetry, a contradiction. Hence, \eqref{exts} holds for $m = 1$.

Finally, let us check for $m = 2$. Note that $X_i \leq X_j$, and so $X_j \not\preceq X_i$, i.e., there is no path from $X_j$ to $X_i$ in the AR-quiver of $\D^b (Q)$. 

We have that $X_i = \tau^{-l} (P_a)$, for some natural number $l$ and some indecomposable projective $KQ$-module $P_a$ ($a$ denotes the vertex corresponding to the projective module).

Suppose $\Ext^2_{\D^b (Q)} (X_i, X_j) = \Hom_{\D^b(Q)} (P_a, \tau^l X_j [2]) \ne 0$. By \ref{facts} (1), $\tau^l X_j [2]$ is a $KQ$-module, i.e., $\tau^l X_j$ has degree $-2$. It follows from \ref{facts} (2) that there is a path from $\tau^l X_j$ to $I[-2]$, for some indecomposable injective module $I$, i.e., $\tau^l X_j \preceq I[-2]$. We also have that $I[-2] \preceq P[-1]$, where $P$ is the indecomposable projective such that $soc \, I \simeq P / rad \, P$, in other words, $\tau P = I [-1]$. By \ref{facts} (3), we have $P [-1] \preceq I_a[-1]$. We also have $I_a [-1] \preceq P_a$. By transitivity we can conclude that $\tau^l X_j \preceq P_a$, and so $X_j = \tau^{-l} (\tau^l X_j) \preceq \tau^{-l} P_a = X_i$, which is a contradiction.
\end{proof}

\begin{thm}\label{1-1corHom<0}
Hom-configurations in $\C (Q)$ are in $1-1$ correspondence with $\Hom_{\leq 0}$-configurations contained in $\E (Q)$.
\end{thm}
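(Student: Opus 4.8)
The plan is to establish the bijection by showing that the map sending a Hom-configuration $\T$ in $\C(Q)$ to itself, viewed as a collection of objects in $\E(Q) \subseteq \D^b(Q)$, lands in the set of $\Hom_{\leq 0}$-configurations contained in $\E(Q)$, and conversely. Since the underlying sets of indecomposables are literally the same (we have already identified $\ind \C(Q)$ with $\E(Q)$), the content is entirely in checking that the two defining lists of axioms are equivalent. I would structure the argument in two directions.

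\emph{From Hom-configurations to $\Hom_{\leq 0}$-configurations.} Let $\T = \{X_1,\dots,X_n\}$ be a Hom-configuration in $\C(Q)$; by Lemma~\ref{Hom-config-BLR} it has exactly $n$ elements, which gives the cardinality part of axiom~(1) of Definition~\ref{Hom0conf.def}. Rigidity of each $X_i$ is immediate: $X_i \in \E(Q)$ has degree $0$ or $1$, so $\overline{X}_i$ is a $KQ$-module, and $\Ext^1_{\D^b(Q)}(X_i,X_i) \simeq \Ext^1_{KQ}(\overline{X}_i,\overline{X}_i)$; but any $KQ$-module with a nonzero self-extension fails to be Hom-orthogonal to its own shift inside $\C(Q)$ --- more directly, one uses that a Hom-free set in $\C(Q)$ consisting of a single object $X$ forces $\Hom_{\C(Q)}(X,X)$ to reduce to scalars, equivalently $\Hom_{\D^b(Q)}(X,\tau X[2]) = 0$, hence $\Ext^{-1}_{\D^b(Q)}(X,X)=0$ by Lemma~\ref{negative.exts}, and combined with $X \in \E(Q)$ this yields $\Ext^1_{KQ}(\overline{X},\overline{X})=0$ via Serre duality. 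Axiom~(2), that $\Hom_{\D^b(Q)}(X_i,X_j)=0$ for $i\ne j$, is exactly the first displayed line in the proof of Lemma~\ref{pr3.Hom0confgs}. Axiom~(3), $\Ext^k_{\D^b(Q)}(\T,\T)=0$ for $k<0$, is precisely Lemma~\ref{pr3.Hom0confgs} (the cases $k=-1$ and $k\le -2$ are both treated there). Finally, axiom~(4), that the summands can be ordered into an exceptional sequence, is Proposition~\ref{Hom-confgs-exc.seq}: order them by the total order $\leq$.

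\emph{From $\Hom_{\leq 0}$-configurations in $\E(Q)$ to Hom-configurations.} Conversely, let $X = X_1 \oplus \cdots \oplus X_n$ be a $\Hom_{\leq 0}$-configuration with all $X_i \in \E(Q)$. I must show $\T = \{X_1,\dots,X_n\}$ is a Hom-free set in $\C(Q)$; maximality then follows from Lemma~\ref{Hom-config-BLR} since $|\T| = n$. Fix $i \ne j$. Using the fundamental-domain decomposition of Proposition~\ref{homC(Q)} (part (b)), $\Hom_{\C(Q)}(X_i,X_j) \simeq \Hom_{\D^b(Q)}(X_i,X_j) \oplus \Hom_{\D^b(Q)}(X_i,\tau X_j[2])$. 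The first summand vanishes by axiom~(2). For the second, Lemma~\ref{negative.exts} gives $\Hom_{\D^b(Q)}(X_i,\tau X_j[2]) \simeq \Ext^{-1}_{\D^b(Q)}(X_j,X_i)$, which vanishes by axiom~(3). Hence $\Hom_{\C(Q)}(X_i,X_j)=0$ for all $i\ne j$, so $\T$ is Hom-free, and the proof is complete. (Note that axioms~(1) and~(4) of the $\Hom_{\leq 0}$-configuration are not even needed for this direction beyond the count $n$; they are automatically recovered from the forward direction applied to $\T$.)

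\emph{Main obstacle.} Almost everything has been pre-packaged in the lemmas of this section, so the only genuinely delicate point is the rigidity statement --- making sure that Hom-freeness of a singleton in the orbit category $\C(Q)$, which a priori only controls $\Hom(X,X)$ up to the identity, really does force $\Ext^1_{KQ}(\overline X,\overline X)=0$ for $X \in \E(Q)$; this hinges on the interplay between the extra $\Hom_{\D^b(Q)}(X,\tau X[2])$ term in the orbit category, Serre duality, and the degree restriction built into $\E(Q)$ (in particular one should double-check the borderline case $d(X)=1$, where $\overline X$ is a non-injective module and $\tau X[2]$ could in principle interact with $X$). I would isolate this as a short preliminary observation before assembling the two directions above.
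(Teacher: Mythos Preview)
Your proof matches the paper's almost line for line: both directions invoke Lemma~\ref{Hom-config-BLR} for the cardinality, Lemma~\ref{pr3.Hom0confgs} for axioms~(2)--(3), Proposition~\ref{Hom-confgs-exc.seq} for axiom~(4), and the decomposition from Proposition~\ref{homC(Q)} together with Lemma~\ref{negative.exts} for the converse.

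The only place you diverge from the paper is the rigidity argument, and there your reasoning is both unnecessary and incorrect. The paper dispatches this in half a line: every indecomposable object in $\D^b(Q)$ is rigid simply because $Q$ is Dynkin (indecomposable $KQ$-modules correspond to positive roots and have no self-extensions). Your attempted argument has two problems. First, the definition of a Hom-free set imposes no condition whatsoever on $\Hom_{\C(Q)}(X,X)$ --- it only constrains $\Hom(X,Y)$ for $X \ne Y$ --- so you cannot deduce $\Hom_{\D^b(Q)}(X,\tau X[2])=0$ from Hom-freeness of the singleton $\{X\}$. Second, even granting that vanishing, Lemma~\ref{negative.exts} only yields $\Ext^{-1}_{\D^b(Q)}(X,X)=0$, which is automatic for any $X\in\E(Q)$ and says nothing about $\Ext^1$; Serre duality identifies $\Ext^1_{KQ}(\overline X,\overline X)$ with $D\Hom_{KQ}(\overline X,\tau\overline X)$, not with an $\Ext^{-1}$-group, so the final ``via Serre duality'' step does not go through. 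Replace your rigidity paragraph (and the ``Main obstacle'' discussion, which is chasing a non-issue) with the one-line Dynkin fact, and your proof becomes correct and identical to the paper's.
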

\begin{proof}
Let $\T$ be a Hom-configuration of $\C (Q)$. By \ref{Hom-config-BLR} $\T$ has $n$ elements, and every element of $\T$ is rigid, since $Q$ is of Dynkin type. Properties $2$ and $3$ of \ref{Hom0conf.def} follow from \ref{pr3.Hom0confgs} and the definition of Hom-configuration and property $4$ follows from \ref{Hom-confgs-exc.seq}. Conversely, suppose $\T$ is a $\Hom_{\leq 0}$-configuration of $\D^b (Q)$ contained in $\E$. Then by property $1$, it has $n$ elements, and for any pair of objects $X,Y$ of $\T$ we have
\[
\Hom_{\C(Q)} (X,Y) = \Hom_{\D^b(Q)} (X,Y) \oplus \Hom_{\D^b(Q)} (X, \tau Y [2]),
\]
since $X, Y$ lie in $\E$. Both summands are zero, by properties $2, 3$ and by \ref{negative.exts}. Hence $\T$ is a Hom-free set with $n$ elements, and so the result follows from \ref{Hom-config-BLR}. 
\end{proof}

Note that the full subcategory $\D^{(\geq 0)-}_{\leq 1}$ of $\D^b (Q)$ whose indecomposables are in $KQ -\mod [i] \setminus \{P_i \, \mid \, i \in [n] \}$ with $i = 0, 1$, which was considered in \cite{BRT}, is just a different fundamental domain for the action of $\tau [2]$ in $\D^b (Q)$. Hence Hom-configurations in $\C(Q)$ are in 1-1 correspondence with $\Hom_{\leq 0}$-configurations contained in $\D^{(\geq 0)-}_{\leq 1}$, via the auto-equivalence $\tau^{-1}$.

\section{Sincere Hom-free sets in $KQ-\mod$}\label{secsincere}

This section is devoted to the study of the set of modules of a Hom-configuration. We prove that the restriction of the Hom-configurations in $\C(Q)$ to $KQ-\mod$ is precisely the set of sincere Hom-free sets.

Some of the proofs  will rely on using reflection functors, which correspond to changing the orientation in the quiver $Q$, to reduce to the case when we have a simple projective module, so we can use \ref{mainlemma}.

Given a sink or a source $i$ of the quiver $Q$, we denote by $\sigma_i (Q)$ the quiver obtained from $Q$ by reversing all the arrows incident to $i$. We denote by $R_i$ the \textit{(simple) reflection functor} associated to a sink $i$ and by $R_j^-$ the simple reflection functor associated to a source $j$. If $i$ is a sink of $Q$, the functor $R_i$ gives an equivalence between $\D^b (Q)$ and $\D^b (\sigma_i (Q))$, and the inverse is given by $R_i^-$. Because $R_i$ and $R_i^-$ commute with $\tau$ and $[ \, - \, ]$, these functors induce equivalences between $\C(Q)$ and $\C(\sigma_i (Q))$. If we have a sequence $i_1, \ldots, i_k$ of vertices of $Q$ such that each $i_j$ is a sink in $\sigma_{i_{j-1}} \ldots \sigma_{i_1} (Q)$, and $R$ is the sequence of reflections $R_{i_k} \ldots R_{i_1}$, we denote by $\sigma_R (Q)$ the quiver $\sigma_{i_k} \ldots \sigma_{i_1} (Q)$, for simplicity. 

We have the following useful description of the image of an indecomposable object of $\D^b(Q)$ under these reflection functors:

Let $i$ be a sink (source) of $Q$, and $M [j]$ an indecomposable object of $\D^b(Q)$. If $M \ne S_i$, then $R_i (M[j]) = N [j]$ ($R^-_i (M [j]) = N [j]$), where $N$ is the indecomposable $K\sigma_i (Q)$-module whose dimension vector is $\underline{dim} (N) = s_i (\underline{dim} M)$, where $s_i$ is the simple reflection associated to the simple root $\alpha_i$. If $M = S_i$, then $R_i (M [j]) = M [j-1]$ ($R^-_i (M [j]) = M [j+1]$).

\begin{remark}\label{reflectionfunctor}
\begin{enumerate}
\item Given $X \in \C (Q)$, there exists a composition of reflection functors $R_{i_k} \ldots R_{i_1}$, where $i_j$ is a sink in $\sigma_{i_{j-1}} \ldots \sigma_{i_1} (Q)$ for each $2 \leq j \leq k$ such that $R_{i_k} \ldots R_{i_1} (X)$ is a simple projective $K (\sigma_{i_k} \ldots \sigma_{i_1} (Q))$-module.
\item Given a set $\T$ of $n$ objects in $\C (Q)$ and a sink $i$ in $Q$, we have that $\T$ is a Hom-configuration in $\C(Q)$ if and only if $R_i (\T)$ is a Hom-configuration in $\C( \sigma_i (Q))$.
\end{enumerate}
\end{remark}

\begin{proof}
Part (1) is a well known result, but we will give a specific sequence of reflection functors which will be useful later.

Consider the set of objects $\{ Y \in \E(Q) \, \mid \, Y \preceq X \}$. Choose a refinement of $\preceq$ to a total order in this set and write the elements in order with respect to this refinement. Let $\{ Y_1, \ldots, Y_k \}$ be this ordering. Given that $Y_1$ is $\preceq-$minimal, $Y_1$ must be a simple projective $KQ$-module. Let $i_1$ be the sink of $Q$ associated to $Y_1$, i.e., $Y_1 = P_{i_1}$. Note that $Y_{2}$ is $\preceq-$minimal in $\E(\sigma_{i_1} (Q))$, i.e., $Y_{2}$ is a simple projective $K \sigma_{i_1} (Q)$-module. Let $i_2$ be the corresponding sink in $\sigma_{i_1} (Q)$. Proceeding this way, we get the composition $R_{i_k} \ldots R_{i_{2}} R_{i_1}$ of reflection functors. Clearly this composition maps $X$ to a simple-projective $K \sigma_R (Q)$-module.
 
Part (2) follows easily from the fact that $R_i$ is an equivalence and it commutes with $\tau$ and the shift functor. 
\end{proof}

\begin{lemma}\label{Hom-confgsandmodules}
\begin{enumerate}
\item \cite[Theorem 3]{Ringel} The set of the simple $KQ$-modules is the unique Hom-configuration of $\C(Q)$ consisting of modules.
\item Any Hom-configuration of $\C(Q)$ has at least one $KQ$-module.
\end{enumerate}
\end{lemma}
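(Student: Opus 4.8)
The plan is to prove part (2) first, since part (1) is cited from \cite{Ringel} and part (2) is the statement actually needed for the induction strategy outlined in Section \ref{sec2}. For part (2), I would argue by contradiction: suppose $\T$ is a Hom-configuration of $\C(Q)$ none of whose elements is a $KQ$-module, so that every $X \in \T$ is of the form $\ol{X}[1]$ with $\ol{X}$ a noninjective indecomposable $KQ$-module, using the description of $\E(Q)$. The idea is to exploit \ref{reflectionfunctor}(1): pick any $X \in \T$ and apply a composition of reflection functors $R = R_{i_k}\cdots R_{i_1}$ carrying $X$ to a simple projective module in $\C(\sigma_R(Q))$. By \ref{reflectionfunctor}(2), $R(\T)$ is a Hom-configuration in $\C(\sigma_R(Q))$ containing a simple projective module $P_a$. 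Now I would invoke \ref{mainlemma}: $\,^\perp P_a^\perp$ is equivalent, via $F_{P_a}$, to $\C(Q')$ where $Q'$ is the full subquiver on $\sigma_R(Q)_0 \setminus \{a\}$, and $R(\T) \setminus \{P_a\}$ is a Hom-free set with $n-1$ elements in $\,^\perp P_a^\perp$, hence (by \ref{Hom-config-BLR}) a Hom-configuration there, which corresponds under $F_{P_a}$ to a Hom-configuration of $\C(Q')$. Running the induction on $n$: a Hom-configuration of $\C(Q')$ must contain a $KQ'$-module $M'$; by \ref{functorF}(1) and (3) its preimage $Y$ in $\,^\perp P_a^\perp$ has $d(Y) = d(F_{P_a}(Y)) = 0$, so $Y$ is a $K\sigma_R(Q)$-module lying in $R(\T)$. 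Thus $R(\T)$ contains two modules $P_a$ and $Y$.

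The remaining issue is to transport this back to $\T$: having a module in $R(\T)$ does not immediately give a module in $\T$, since reflection functors can shift the degree of $S_{i_j}$. So the actual contradiction I would aim for is slightly different — I would instead choose the reflection sequence more carefully, or argue directly that $R(\T)$ having at least one module forces $\T$ to have had one. Concretely: since $R$ is an equivalence $\C(Q) \to \C(\sigma_R(Q))$ and every $X \in \T$ has the form $\ol X[1]$, I would track what $R$ does to such objects. A cleaner route, and the one I'd pursue, is to not assume $\T$ is module-free but instead to observe that the base case $n=1$ is trivial (the unique indecomposable is $S_1 = P_1$, a module) and to show the inductive step produces a module in $\T$ directly: apply the \emph{specific} reflection sequence from the proof of \ref{reflectionfunctor}(1) built from a $\preceq$-minimal element $Y_1 = P_{i_1}$ of $\E(Q)$; if $P_{i_1} \in \T$ we are done, otherwise $\T \subseteq \,^\perp P_{i_1}^\perp$ only fails because some element of $\T$ may have a nonzero map to or from $P_{i_1}$ — but here I would use \ref{facts}(1) together with the structure of $\E(Q)$ to show that if no element of $\T$ equals $P_{i_1}$, then $\T$ lives in a perpendicular category where induction applies.

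The main obstacle is precisely this bookkeeping with reflection functors and degrees: ensuring that the module produced by the inductive hypothesis in the smaller category $\C(Q')$ (or $\C(\sigma_R(Q'))$) pulls back to an honest $KQ$-module in the original $\T$, rather than something of degree $1$, once the reflection functors are undone. Controlling this requires care about which simples $S_{i_j}$ get shifted by the reflection sequence and whether they appear in $\T$. I expect the key technical point to be that one can always choose the reflection sequence so that the degree-$0$ part is preserved on the relevant element — or, alternatively, to bypass reflections entirely for part (2) by a direct argument: a Hom-free set all of whose elements lie in $(\mod A \setminus \I)[1]$ would, after shifting down by $[1]$, be a Hom-free set in $\mod A \setminus \I$; counting and sincerity constraints (anticipating Section \ref{secsincere}) then force it to have fewer than $n$ elements, contradicting \ref{Hom-config-BLR}. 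I would check which of these two routes the ambient machinery of the paper makes shortest, and present that one. Part (1) then follows quickly from part (2) combined with \cite[Theorem 3]{Ringel}, or by the same induction noting that a Hom-configuration consisting entirely of modules restricts, in each perpendicular category, to one consisting entirely of modules.
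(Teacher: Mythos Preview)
Your proposal reverses the logical order the paper uses, and this causes the very obstacle you identify. The paper does \emph{not} merely cite part (1) from Ringel; it gives its own inductive proof of (1), and this is where the reflection-functor machinery lives. The argument is: if $\T$ consists of modules and contains a simple projective $S_i$, pass to $\C(Q')$ via $F_{S_i}$ and use induction and \ref{functorF}; if $\T$ contains no simple projective, pick a $\preceq$-minimal $X\in\T$, apply the reflection sequence $R$ from the proof of \ref{reflectionfunctor}(1), and observe that by minimality every $Y\in\T$ satisfies $Y\not\preceq X$ or $Y=X$, so $R(\T)$ still consists of modules. Then the first case forces $R(\T)$ to be the simples of $K\sigma_R(Q)$-mod, and one derives a contradiction by looking at the simple injective at the last reflected vertex $i_k$: its preimage under $R_{i_k}$ has degree $1$, yet it was supposed to be a module. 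So the degree-tracking problem you flag is handled precisely because in part (1) one \emph{knows} all elements of $\T$ are modules, which controls what $R$ does.

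With (1) in hand, part (2) is three lines and essentially your ``shift down'' idea, but without any forward reference: if $\T$ had no module, then $\T\subseteq (\mod KQ\setminus\I)[1]$, and since $[-1]$ is an autoequivalence of $\C(Q)$, $\T[-1]$ is a Hom-configuration consisting of modules. By part (1), $\T[-1]=\{S_1,\dots,S_n\}$, so $\T=\{S_i[1]\}$; but $Q$ has a source $i$, whence $S_i=I_i$ is injective and $S_i[1]\notin\E(Q)$, a contradiction. Your proposal to invoke sincerity from Section~\ref{secsincere} here would be circular, since \ref{sincerity2} relies on this very lemma; and your reflection approach to (2) directly runs into the degree problem because, unlike in (1), you have no control over which elements of $\T$ are modules before reflecting.
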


\begin{proof}
Part (1) was proved by Ringel (cf.\cite[Theorem 3]{Ringel}) but we will give an alternative proof, which will be by induction on $n$, the number of vertices of $Q$. The case when $n= 1$ is very easy to check. Let $\T$ be a Hom-configuration consisting of modules in $\C (Q)$, where $|Q_0| = n$. First suppose that $\T$ contains a simple projective module $S_i$. We will use the equivalence $F_{S_i}$ between $\,^\perp S_i^\perp$ and $\C (Q^\prime)$, where $Q^\prime = Q \setminus \{i\}$ defined in Section \ref{sec2} (for its definition and some of its properties, see proof of \ref{mainlemma} and \ref{functorF}). We have that $F_{S_i}(\T \setminus S_i)$ is a Hom-configuration in $\C (Q^\prime)$ and it consists only of $KQ^\prime$-modules, by \ref{functorF} (1). It follows by induction that these $KQ^\prime$-modules are the simple $KQ^\prime$-modules, and so we have $\T = \{S_1, \ldots, S_n\}$, by \ref{functorF} (2). 

Suppose now that $\T$ doesn't contain any simple projective module. Let $X$ be a minimal element of $\T$ with respect to the partial order $\preceq$. Let $R$ be the composition of reflection functors described in the proof of \ref{reflectionfunctor} (1). We have that $R$ maps $X$ to a simple-projective $K \sigma_R (Q)$-module and if $Y \in KQ-\mod$ with $Y \not\preceq X$ then $R(Y) \in K \sigma_R (Q)-\mod$. The same holds for $R^\prime := R_{i_{k-1}} \ldots R_{i_2} R_{i_1}$, i.e., $R^\prime (Y) \in K \sigma_{R^\prime} (Q)-\mod$. So, in particular, $R(\T)$ lies in $K \sigma_R (Q)-\mod$, due to the choice of $X$. Moreover, by \ref{reflectionfunctor} (2), $R (\T)$ is a Hom-configuration in $\C (\sigma_R (Q))$ and it contains a simple projective module. Hence, $R(\T)$ is the set of simple modules in $K \sigma_R (Q)-\mod$. However, if one considers the simple injective $K \sigma_R (Q)$-module $S^\prime$ corresponding to $R_{i_k}$, we know there is an element $Y$ of $\T$ such that $R(Y) = S^\prime$, and $R_{i_{k-1}} \ldots R_{i_1} (Y)$ has degree $0$ but $R_{i_{k-1}} \ldots R_{i_1} (Y) = R^{-1}_{i_k} (S^\prime) = S^\prime [1]$, a contradiction.

To prove (2), suppose $\T$ is a Hom-configuration of $\C(Q)$ whose objects lie in $\ind ((\mod \, KQ \setminus \I) [1])$. Then $\T$ must be the set $\{ S [1] \mid S \text{ simple module} \}$, since otherwise $\T [-1] = \{ M[-1] \mid M \in \T\}$ would be a Hom-configuration consisting of modules which is not the set of the simple modules, contradicting (1). However, since $Q$ is a Dynkin quiver, it must have a source $i$. Hence $S_i = I_i$ is injective, a contradiction.
\end{proof}

In order to extend a sincere Hom-free set in $KQ-\mod$ to a Hom-configuration by adding indecomposable objects of degree $1$, we use the notion of perpendicular category. If $\T$ is a set of indecomposable modules, the perpendicular category is defined by
\[
\T^\perp = \{M \in KQ-\mod \mid \Hom_{KQ} (X,M) = 0, \Ext^1_{KQ} (X,M) = 0, \forall X \in \T \}.
\]

If $Q$ is a Dynkin quiver and $X$ is an indecomposable $KQ$-module, it is well known that $X^\perp_Q$ is equivalent to $KQ^\prime-\mod$ where $Q^\prime$ is a quiver with no oriented cycles and $|Q_0|-1$ vertices. Note that the functor from $KQ^\prime-\mod$ to $KQ-\mod$ is exact and induces isomorphisms on both $\Hom$ and $\Ext$. We refer the reader to \cite[Thm 2.3]{Schofield} for more details.

\begin{remark}\label{Schofieldcor}
Let $\T$ be a Hom-free set in $KQ-\mod$. Then $\T^\perp_Q \simeq KQ^\prime-\mod$ for some Dynkin quiver $Q^\prime$ with $|Q_0|-|\T|$ vertices.
\end{remark}

\begin{proof}
This follows immediately from Theorem 2.5 in \cite{Schofield} and \ref{Hom-confgs-exc.seq}.
\end{proof}

\begin{prop}\label{sincerity1}
Let $\T$ be a sincere Hom-free set in $KQ-\mod$. There exists a unique Hom-configuration of $\C(Q)$ whose restriction to $KQ-\mod$ is $\T$.
\end{prop}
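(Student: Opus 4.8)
The plan is to split the statement into an existence part and a uniqueness part, and to handle both by the same inductive strategy used throughout the paper: reduce, via \ref{mainlemma} and \ref{functorF} (together with the reflection functors of \ref{reflectionfunctor}), to the case of a Hom-configuration containing a simple projective module. For existence, I would start from the sincere Hom-free set $\T$ in $KQ\text{-}\mod$ and use \ref{Schofieldcor}: since $\T$ is Hom-free, $\T^\perp_Q \simeq KQ^\prime\text{-}\mod$ for some Dynkin quiver $Q^\prime$ with $n - |\T|$ vertices. The natural candidates to add are the objects $S^\prime[1]$ where $S^\prime$ ranges over the images in $KQ^\prime\text{-}\mod$ of the simple $KQ^\prime$-modules under the perpendicular-category embedding, shifted by $[1]$; one checks these land in $\E(Q)$ (i.e. the relevant modules are non-injective as $KQ$-modules) precisely because $\T$ is sincere, so that no injective can lie in $\T^\perp_Q$ as a "would-be" added module of degree $0$. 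Call the resulting set $\widehat{\T}$; it has $n$ elements, and the claim is that it is Hom-free in $\C(Q)$.

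To verify that $\widehat{\T}$ is Hom-free, I would separate the Hom-checks by degree. Morphisms in $\C(Q)$ between two objects $X,Y \in \E(Q)$ are, by \ref{homC(Q)}, controlled by $\Hom_{\D^b(Q)}(X,Y)$ and $\Hom_{\D^b(Q)}(X,\tau Y[2])$, the latter being $\Ext^{-1}_{\D^b(Q)}(Y,X)$ by \ref{negative.exts}. Between two modules in $\T$ this is given (since $\T$ is Hom-free and every module over a Dynkin path algebra is rigid). Between a module $X \in \T$ and an added object $S^\prime[1]$: $\Hom_{\D^b(Q)}(X, S^\prime[1]) = \Ext^1_{KQ}(X, S^\prime) = 0$ because $S^\prime \in \T^\perp_Q$, and $\Hom_{\D^b(Q)}(S^\prime[1], X) = 0$ for degree reasons; the $\tau[2]$-shifted terms translate into $\Ext^{-1}$'s which vanish again for degree reasons. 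Between two added objects $S^\prime[1], S^{\prime\prime}[1]$: the relevant Homs reduce, via the exact fully faithful embedding $KQ^\prime\text{-}\mod \hookrightarrow KQ\text{-}\mod$, to Homs and $\Ext^1$'s between distinct simple $KQ^\prime$-modules, which are zero; here one uses that $\C(Q^\prime) \simeq \C(Q)$-style computations for $Q^\prime$ give the simple modules of $KQ^\prime$ as a Hom-configuration (the Example after the Hom-configuration definition). So $\widehat{\T}$ is a Hom-free set of $n$ objects, hence a Hom-configuration by \ref{Hom-config-BLR}, with restriction to $KQ\text{-}\mod$ exactly $\T$.

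For uniqueness, suppose $\widehat{\T}_1$ and $\widehat{\T}_2$ are two Hom-configurations of $\C(Q)$ with $\widehat{\T}_i \cap (KQ\text{-}\mod) = \T$. The degree-$1$ parts $\widehat{\T}_i \setminus \T$ are sets of objects of the form $\ol{X}[1]$ with $\ol{X}$ non-injective; being Hom-orthogonal (in $\C(Q)$) to every module in $\T$ forces, via \ref{homC(Q)} and \ref{negative.exts} as above, that each such $\ol{X}$ lies in $\T^\perp_Q$, and conversely being Hom-orthogonal among themselves forces, under the embedding $\T^\perp_Q \simeq KQ^\prime\text{-}\mod$, that $\{\ol{X} : \ol{X}[1] \in \widehat{\T}_i \setminus \T\}$ is a Hom-free set of $n - |\T| = |Q^\prime_0|$ modules in $KQ^\prime\text{-}\mod$ — hence a Hom-configuration of $\C(Q^\prime)$ consisting of modules, which by \ref{Hom-confgsandmodules}(1) must be the set of simple $KQ^\prime$-modules. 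Thus $\widehat{\T}_1 \setminus \T = \widehat{\T}_2 \setminus \T$, giving uniqueness. Alternatively, and perhaps more cleanly, I would run the whole argument by induction on $n$ after using \ref{reflectionfunctor}(1) to reflect a $\preceq$-minimal element of $\T$ to a simple projective $P_a$ of $K\sigma_R(Q)$ (note $\T$ sincere guarantees $P_a \in \T$ after reflection, since a simple projective must already be in any Hom-configuration whose module part is sincere — this needs a short argument), then apply $F_{P_a}$ and \ref{functorF}(3) to pass to $Q^\prime = Q \setminus \{a\}$, where sincerity of $F_{P_a}(\T \setminus P_a)$ follows from \ref{functorF}(3) and one concludes by the induction hypothesis.

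\textbf{Main obstacle.} The delicate point is the interplay between sincerity of $\T$ and the degree-$0$/degree-$1$ bookkeeping: one must show precisely that sincerity is exactly what prevents an added degree-$1$ object from "collapsing" to degree $0$ (i.e. from being forced to be an injective module placed in degree $1$, which is excluded from $\E(Q)$), and dually that without sincerity the extension would fail to have $n$ elements or would fail to be maximal. Making the reduction step rigorous — in particular checking that after reflecting a $\preceq$-minimal element of the sincere set $\T$ one indeed obtains a simple \emph{projective} that lies in the reflected configuration, and that the property "module part is sincere" is preserved under $F_{P_a}$ — is where the real care is needed; the Hom-vanishing verifications themselves are routine given \ref{homC(Q)}, \ref{negative.exts}, and \ref{Schofieldcor}.
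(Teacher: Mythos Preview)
Your approach is essentially the paper's own: complete $\T$ by $\{S[1] : S \text{ simple in } \T^\perp\}$, use sincerity to guarantee no injective lies in $\T^\perp$ (so the added objects sit in $\E(Q)$), and for uniqueness observe that the degree-$1$ part of any completion gives a Hom-free set of $|Q'_0|$ modules in $\T^\perp \simeq KQ'\text{-}\mod$, hence the simples by \ref{Hom-confgsandmodules}(1). There is one bookkeeping slip worth correcting: in $\Hom_{\C(Q)}(S'[1], X)$ for $X \in \T$, the $\tau[2]$-shifted summand is $\Hom_{\D^b(Q)}(S'[1], \tau X[2]) \simeq D\Hom_{KQ}(X, S')$, which vanishes because $S' \in \T^\perp$, \emph{not} for degree reasons --- this is exactly why both the $\Hom$- and the $\Ext^1$-vanishing in the definition of $\T^\perp$ are required, and the paper packages this as the equivalence $Y[1] \in {}^\perp\T^\perp \Leftrightarrow Y \in \T^\perp$; also, between two added objects only $\Hom_{KQ}$ (not $\Ext^1$) is at stake, which is just as well since $\Ext^1$ between distinct simples of $KQ'$ need not vanish. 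Your alternative inductive/reflection route is not needed and the parenthetical claim it leans on (that a simple projective must lie in the reflected configuration) would itself require proof.
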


\begin{proof}
Let $\T = \{X_1, \ldots, X_k\}$ be a sincere Hom-free set in $KQ-\mod$. Since $\T$ is sincere, we have that for any injective module $I$, there exists $i \in [k]$ such that $\Hom_{KQ} (X_i, I) \ne 0$. Hence the injective $KQ$-modules don't lie in $\T^\perp$.

We claim that given $Y \in \ind (KQ-\mod \setminus \I)$, $Y[1]$ lies in $\,^\perp \T^\perp$ if and only if $Y \in \T^\perp$. Indeed, given $X \in \T$, we have $\Hom_{\C (Q)} (X, Y [1]) \simeq \Hom_{\D^b(Q)} (X, Y[1]) \simeq \Ext^1_{KQ} (X,Y)$, and $\Hom_{\C(Q)} (Y[1], X) = \Hom_{\D^b(Q)} (Y[1], \tau X [2]) \simeq \Hom_{KQ} (X,Y)$, and so our claim holds. 

We know $\T^\perp \simeq KQ^\prime-\mod$, for some Dynkin quiver $Q^\prime$ with $n-k$ vertices.

Consider $U = \{ S [1] \, \mid \, S \text{ simple object in } \T^\perp \}$. By the first part of this proof, $U \subseteq  \,^\perp \T^\perp$ and so $\T^\prime \sqcup U$ is a Hom-free set in $\C(Q)$. Since $U$ has $n-k$ elements, we have that $\T^\prime \sqcup U$ is indeed a Hom-configuration in $\C(Q)$.

To prove the uniqueness let $V$ be a set of elements in $\ind (KQ-\mod \setminus \I) [1]$ such that $\T^\prime \sqcup V$ is a Hom-configuration in $\C (Q)$. Then it follows from the first part of the proof that $V [-1]$ is a Hom-free set in $\T^\perp$ and it contains $n-k$ elements. But $\T^\perp \simeq KQ^\prime-\mod$, where $Q^\prime$ has $n-k$ vertices, so it follows from \ref{Hom-confgsandmodules} (1) that $V$ must be the shift of the simple objects in $\T^\perp$.
\end{proof}

We remark that the proof gives us an explicit way to extend a sincere Hom-free set to a Hom-configuration, by simply taking the perpendicular category of the sincere Hom-free set and picking the shift of the simple objects in this category. 

\begin{ex}\label{examplenotation}
Consider the quiver $Q:$ $\xymatrix{4 \ar[r] & 3 & 2 \ar[l] \ar[r] & 1}$ of type $A_4$. We denote an indecomposable module with dimension vector given by $(i_1, i_2, i_3, i_4)$ by listing the vertices of the simple modules in its support. 

Let $\T = \{34, 12 \}$. Then $\obj \, \T^\perp = \{1, 123, 23\}$ and the simple objects of this subcategory are $1$ and $23$. So $\U = \{1 [1], 23 [1] \}$ and the unique Hom-configuration whose set of modules is $\T$ is $\T \sqcup \U = \{ 12, 34, 1[1], 23[1]\}$. Note that $\T \sqcup \U$ is not the unique Hom-configuration containing $\T$. For instance, $\{12, 34, 23, 123 [1]\}$ is also a Hom-configuration, but it contains a module not in $\T$.
\end{ex}

\begin{lemma}\label{SincereReflection}
Let $\T$ be a Hom-configuration in $\C (Q)$ such that $\T^\prime := \T \mid_{KQ-\mod}$ is sincere, and let $i$ be a source in $Q$. Then $R^-_i (\T) \mid_{K \sigma_i (Q) - \mod}$ is also sincere.
\end{lemma}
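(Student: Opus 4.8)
The plan is to transport $\T$ along $R_i^-$ and identify the module part of the resulting Hom-configuration. Since $i$ is a source, $S_i=I_i$, and $R_i^-$ commutes with $\tau$ and with the shift, so it induces an equivalence $\C(Q)\simeq\C(\sigma_i(Q))$ and, exactly as in Remark~\ref{reflectionfunctor}(2), $R_i^-(\T)$ is a Hom-configuration in $\C(\sigma_i(Q))$. It thus remains to describe $R_i^-(\T)\cap(K\sigma_i(Q)-\mod)$ once each object has been replaced by its representative in the fundamental domain $\E(\sigma_i(Q))$. Using the explicit description of $R_i^-$ on indecomposables recalled before Remark~\ref{reflectionfunctor}: a module $M\in\T'$ with $M\neq S_i$ goes to the module with dimension vector $s_i(\underline{dim}\,M)$; $S_i$, if it lies in $\T'$, goes to $S_i[1]$, which is not a module; and a degree-one object $M[1]\in\T$ (for which $\ol M$ is non-injective, hence $\neq S_i$) goes to $R_i^-(\ol M)[1]$, which already lies in $\E(\sigma_i(Q))$ unless $R_i^-(\ol M)$ is an injective $K\sigma_i(Q)$-module. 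Now $s_i$ permutes the positive roots different from $\alpha_i$, so $R_i^-$ restricts to a bijection from the non-injective $KQ$-modules $\neq S_i$ onto the non-injective $K\sigma_i(Q)$-modules $\neq S'_i$, with one exception: a module $L$ with $R_i^-(L)=I'_i$, where $I'_i$ is the indecomposable injective at $i$ in $K\sigma_i(Q)-\mod$ (one checks $R_i^-(I_j)=I'_j$ for $j\neq i$, whence $L$ is non-injective). For $M[1]$ with $\ol M=L$ one gets $R_i^-(L[1])=I'_i[1]$, whose representative in $\E(\sigma_i(Q))$ is the projective $P'_i$ (apply $(\tau')^{-1}[-2]$ and use $(\tau')^{-1}(I'_i)=P'_i[1]$). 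Finally, since $R_i^-$ commutes with the Serre functor $\tau[1]$ of $\D^b$, one finds $L=\tau S_i$. Hence the module part of $R_i^-(\T)$ equals $\{\,R_i^-(M):M\in\T',\ M\neq S_i\,\}$, with $P'_i$ adjoined precisely when $L[1]\in\T$.

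Sincerity at a vertex $j\neq i$ is automatic: $s_i$ fixes the $j$-th coordinate of every dimension vector, and since $S_i$ has no support at $j$, sincerity of $\T'$ at $j$ is witnessed by some $M\in\T'$ with $M\neq S_i$, and then $R_i^-(M)$ still has support at $j$. (Equivalently, via the triangle equivalence $R_i^-$, $\Hom_{K\sigma_i(Q)}(R_i^-(M),I'_j)\simeq\Hom_{KQ}(M,I_j)$ for $j\neq i$, and the latter is nonzero for some $M\in\T'$ since $\T'$ is sincere, i.e.\ no indecomposable injective $KQ$-module lies in $\T^\perp$.) For sincerity at $i$ I argue by cases. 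If $L[1]\in\T$, then $P'_i$ occurs in the module part of $R_i^-(\T)$ and $(\underline{dim}\,P'_i)_i=1$. If $S_i\in\T'$, then $\Hom_{\C(Q)}(M,S_i)=0$ for every $M\in\T'$ with $M\neq S_i$ forces $\Hom_{KQ}(M,S_i)=0$, i.e.\ $(\underline{dim}\,M)_i=0$; consequently $(\underline{dim}\,R_i^-(M))_i=\sum_{k\sim i}(\underline{dim}\,M)_k$, which is positive for any $M\in\T'$ (necessarily $\neq S_i$) witnessing sincerity of $\T'$ at a neighbour of $i$.

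The remaining case is $S_i\notin\T'$ and $L[1]\notin\T$. By transfer of $\Hom$ and $\Ext^1$ along $R_i^-$, for $M\in\T'$ one has $\Hom_{K\sigma_i(Q)}(R_i^-(M),I'_i)\simeq\Hom_{KQ}(M,L)$ and $\Ext^1_{K\sigma_i(Q)}(R_i^-(M),I'_i)\simeq\Ext^1_{KQ}(M,L)$, the latter being zero because $I'_i$ is injective; so sincerity at $i$ follows as soon as $\Hom_{KQ}(M,L)\neq0$ for some $M\in\T'$. Assume not. Then $\Hom_{KQ}(M,L)=0$ for all $M\in\T'$, and also $\Ext^1_{KQ}(M,L)\simeq D\,\Hom_{KQ}(S_i,M)=0$ by Serre duality (using $L=\tau S_i$), since $S_i=I_i$ is injective and $M\neq S_i$ is indecomposable; therefore $L\in\T^\perp$. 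By Proposition~\ref{sincerity1}, $\T=\T'\sqcup U$ with $U$ the set of shifts of the simple objects of $\T^\perp$, so $L[1]\notin\T$ makes $L=\tau S_i$ a non-simple indecomposable of $\T^\perp$. The decisive step is then to rule this out, that is, to show that $L$ must be a simple object of $\T^\perp$. I would do so by analysing the Auslander--Reiten sequence $0\to L\to E\to S_i\to 0$ together with the hereditary structure of $\T^\perp$ (in which composition factors of an indecomposable are distinct simples): a proper nonzero sub- or quotient-object of $L$ inside $\T^\perp$ produces a simple object $N$ of $\T^\perp$ (hence $N[1]\in\T$) related non-trivially to $L$, and one plays this off against the maximality of $\T$, using the Serre functor $[-1]$ of $\C(Q)$ and the fact that $S_i\notin\T$ can only be "blocked" by a module of $\T'$ with support at $i$. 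This exclusion of the degenerate configuration is the step I expect to demand the most care; everything else is routine bookkeeping with the reflection formulas.
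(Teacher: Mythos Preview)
Your setup is correct and your handling of the case $S_i\in\T'$ coincides with the paper's. The extra subcase $L[1]\in\T$ is a nice shortcut. However, the remaining case---$S_i\notin\T'$ and $L[1]\notin\T$---is genuinely incomplete: you reduce to showing that $L=\tau S_i$ cannot be a non-simple object of $\T^\perp$, but the sketch you offer (the AR-sequence $0\to L\to E\to S_i\to 0$, ``playing off'' a simple sub/quotient of $L$ against maximality of $\T$) is vague, and it is not clear how it terminates. The difficulty is that on the $Q$ side $L$ has no special homological property inside $KQ$-mod that forces its simple constituents in $\T^\perp$ to be of a prescribed form.

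The paper closes exactly this gap by staying on the $\sigma_i(Q)$ side, where the transported object $R_i^-(L)=I^i$ is \emph{injective}. Assuming $\U':=R_i^-(\T)\cap K\sigma_i(Q)\text{-mod}$ is not sincere at $i$, one gets $I^i\in\U'^{\perp}$ (Hom vanishes by non-support at $i$, $\Ext^1$ vanishes by injectivity). Now $\U'^{\perp}\simeq KQ'$-mod for some Dynkin $Q'$; take a simple quotient $G^{-1}(S)$ of $I^i$ inside $\U'^{\perp}$. Since $K\sigma_i(Q)$ is hereditary, a quotient of an injective module is injective, so $G^{-1}(S)$ is an injective $K\sigma_i(Q)$-module. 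But by Proposition~\ref{sincerity1} the simple objects of $\U'^{\perp}$ are exactly the modules in $\U''[-1]$ (the degree-one part of $R_i^-(\T)$ shifted down), and these are never injective. Contradiction. The point you were missing is that injectivity of $I^i$ in the ambient category is what pins down the simple quotient; this is unavailable for $L$ on the $Q$ side, which is why your argument stalled there.
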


\begin{proof}
We will use the following notation for simplicity: $\U = R_i^- (\T)$, $\U^\prime = R_i^- (\T) \cap K \sigma_i (Q)-\mod$ and $\U^{\prime \prime} = \U \setminus \U^\prime$.

Note that all $KQ$- modules except $S_i$ are mapped to $K\sigma_i (Q)$- modules via $R^-_i$, and moreover, the support on all the vertices other than $i$ remains unchanged, so we only need to analyze what happens to the support on vertex $i$.

Suppose $S_i = I_i$ belongs to $\T$. Let $j$ be a neighbor of $i$ in $Q$. There is a $KQ$-module $Y$ in $\T^\prime$ with $(\underline{dim} \, Y)_j \ne 0$, by hypothesis. On the other hand, we must have $(\underline{dim} \, Y)_i = 0$, otherwise $\Hom (Y, I_i) \ne 0$, which contradicts the fact that $\T$ is Hom-free. Therefore $(\underline{dim} \, R^-_i (Y))_i \ne 0$ and the sincerity is preserved, as we wanted.

Suppose now that $I_i$ doesn't lie in $\T$. Assume, for a contradiction, that $\U^\prime$ is not sincere, i.e., no $K \sigma_i (Q)$-module  in $R^-_i (\T)$ has support on $i$. Let $I^i$ be the indecomposable injective $K \sigma_i (Q)$-module associated to the vertex $i$. Then we have $\Hom_{K\sigma_i (Q)} (X, I^i) = 0$, for all $X \in \U^\prime$. We also have $\Ext^1_{K \sigma_i (Q)} (X, I^i) = \Hom (X, I^i [1]) = 0$, since $I^i$ is injective and $d(X) = 0$. Hence $I^i \in \U^{\prime^\perp}$. By \ref{Schofieldcor}, we have $\U^{\prime^\perp} \simeq^G KQ^\prime-\mod$, where $Q^\prime$ is a Dynkin quiver with $n-|\U^\prime|$ vertices. Let $G (I^i) \rightarrow S \rightarrow 0$ be a surjection in $KQ^\prime-\mod$ (note that every module maps to a simple), and $K$ its kernel. Then we have a short exact sequence $0 \rightarrow G^{-1} (K) \rightarrow I^i \rightarrow G^{-1} (S) \rightarrow 0$ in $\U^{\prime^\perp}$. Since $I^i$ is injective, so is $G^{-1} (S)$. On the other hand, we have that the image of $\U^{\prime \prime} [-1]$ under $G$ is the set of simple $KQ^\prime$-modules and $\U^{\prime \prime} [-1]$ doesn't contain any injective $K \sigma_i (Q)$-module (see proof of \ref{sincerity1}), so $G^{-1} (S)$ is not injective, a contradiction.
\end{proof}

\begin{prop}\label{sincerity2}
The set of modules of any Hom-configuration in $\C(Q)$ is sincere.
\end{prop}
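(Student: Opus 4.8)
The plan is to reduce, by induction on $n$, to the situation where the Hom-configuration contains a simple projective module, and then invoke the equivalence $F_{S_i}$ of Lemma \ref{mainlemma} together with Remark \ref{functorF}. Concretely, let $\T$ be a Hom-configuration in $\C(Q)$ and write $\T' = \T\mid_{KQ-\mod}$; I want to show $\bigcup_{X \in \T'} \mathrm{supp}(X) = Q_0$. The base case $n=1$ is immediate since the unique Hom-configuration is $\{S_1\}$ by Lemma \ref{Hom-confgsandmodules}. For the inductive step I would first dispose of the case where $\T$ already contains a simple projective $KQ$-module $S_a$: here $F_{S_a}$ maps $\T\setminus\{S_a\}$ to a Hom-configuration of $\C(Q')$ with $Q' = Q\setminus\{a\}$, and by Remark \ref{functorF}(3) the supports of the modules are preserved, so by the inductive hypothesis $\bigcup_{X\in(\T\setminus\{S_a\})\cap KQ-\mod}\mathrm{supp}(X) = Q_0\setminus\{a\}$; adding $\mathrm{supp}(S_a) = \{a\}$ gives sincerity of $\T'$ — provided we know $S_a$ itself lies in $\T'$ (it does, being a module) and that $\T\setminus\{S_a\}$ does contain at least one module, which follows from Lemma \ref{Hom-confgsandmodules}(2) applied in $\C(Q')$.

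The main work is the general case, where $\T$ contains no simple projective. The idea is to use reflection functors at a source to transport $\T$ toward a Hom-configuration that does contain a simple projective, while tracking sincerity. Here Lemma \ref{SincereReflection} is the crucial tool, but it is stated for a \emph{source}, producing a reflection $R_i^-$, whereas the reduction in Remark \ref{reflectionfunctor}(1) uses sinks and functors $R_{i_j}$; I would therefore run the argument with the dual statement. Pick a minimal element $X$ of $\T$ with respect to $\preceq$ — by minimality $X$ is a simple projective $KQ$-module, so $X \in \T'$. If $X$ is already a simple projective we are in the previous case, so suppose not; then... actually the cleanest route: since $X$ is a simple projective object of $\C(Q)$ but by assumption not a simple projective $KQ$-\emph{module}, apply a sequence of reflection functors as in Remark \ref{reflectionfunctor}(1) (dualized to sources, i.e.\ functors $R_{i_j}^-$) carrying $\T$ to a Hom-configuration $\U$ of $\C(\sigma_R(Q))$ in which $X$ has become a genuine simple projective module. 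By repeated application of Lemma \ref{SincereReflection} (one reflection at a time, at a source, checking each intermediate configuration has sincere module-part), sincerity of $\T'$ transfers to sincerity of $\U\mid_{K\sigma_R(Q)-\mod}$ — but this is exactly what we want to prove, so instead I must argue in the reverse direction: one shows $\U\mid$ is sincere using the simple-projective case, then pulls sincerity back through the reflections.

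Let me restructure the inductive step to make the logic sound. \textbf{Step 1:} If $\T$ contains a simple projective $KQ$-module, apply $F_{S_a}$ and conclude by induction as above. \textbf{Step 2:} Otherwise, let $X$ be $\preceq$-minimal in $\T$, so $X$ is a simple projective object of $\C(Q)$, and let $R = R_{i_k}^-\cdots R_{i_1}^-$ be the (dualized) composition from Remark \ref{reflectionfunctor}(1) carrying $X$ to a simple projective $K\sigma_R(Q)$-module; by Remark \ref{reflectionfunctor}(2) (dualized), $\U := R(\T)$ is a Hom-configuration of $\C(\sigma_R(Q))$ containing a simple projective module, hence by Step 1 its module-part $\U\mid_{K\sigma_R(Q)-\mod}$ is sincere. \textbf{Step 3:} Now run the reflections backward: writing $R = R_{i_k}^-\cdots R_{i_1}^-$, apply the \emph{inverse} functors $R_{i_1}\cdots R_{i_k}$ one step at a time. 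At each stage we have a Hom-configuration whose module-part is sincere, and we must show the next one (obtained by a single reflection) also has sincere module-part; this is precisely Lemma \ref{SincereReflection} — or rather its mirror for sinks — applied to each intermediate quiver. Iterating back to $Q$ yields sincerity of $\T'$. The main obstacle is bookkeeping: one must verify that Lemma \ref{SincereReflection} (proved for $R_i^-$ at a source) has the symmetric counterpart for $R_i$ at a sink, and that the vertices $i_j$ occurring in the composition from Remark \ref{reflectionfunctor}(1) are indeed sinks (so their inverses act at sinks), so that each backward step legitimately invokes the lemma; once the duality is set up, each individual step is a short support computation exactly as in the proof of Lemma \ref{SincereReflection}.
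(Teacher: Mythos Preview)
Your strategy is the paper's, but you have tied yourself in an unnecessary knot over dualization. The point you are missing: if you use Remark~\ref{reflectionfunctor}(1) \emph{as stated}---a composition $R=R_{i_k}\cdots R_{i_1}$ of reflections at \emph{sinks}---then the inverse $R^{-1}=R_{i_1}^{-}\cdots R_{i_k}^{-}$ is automatically a composition of reflections at \emph{sources}. Indeed $i_k$ is a sink in $\sigma_{i_{k-1}}\cdots\sigma_{i_1}(Q)$, hence a source in $\sigma_R(Q)$; after applying $R_{i_k}^{-}$ the vertex $i_{k-1}$ is a source in the resulting quiver; and so on down to $i_1$, which is a source in $\sigma_{i_1}(Q)$. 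So each backward step is exactly an application of $R_i^{-}$ at a source, and Lemma~\ref{SincereReflection} applies \emph{verbatim} at every stage---no ``mirror for sinks'' is needed, and the bookkeeping obstacle you flag simply disappears. This is precisely how the paper argues.

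Two smaller points. The claim ``by minimality $X$ is a simple projective $KQ$-module'' is false: $\preceq$-minimality within $\T$ does not force $X$ to be $\preceq$-minimal in all of $\E(Q)$, and the phrase ``simple projective object of $\C(Q)$'' has no meaning since $\C(Q)$ is not abelian. In fact minimality plays no role here; the paper just picks an arbitrary $X\in\T$ and invokes Remark~\ref{reflectionfunctor}(1). And your worry in Step~1 that $\T\setminus\{S_a\}$ might contain no module is moot: the inductive hypothesis already asserts that $F_{S_a}(\T\setminus\{S_a\})\mid_{KQ'-\mod}$ is sincere, which of course forces it to contain a module whenever $Q'\ne\varnothing$.
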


\begin{proof}
We prove this by induction on $n$. The proposition is trivial in the case when $n = 1$. Now suppose the proposition holds for $n-1$ and let $\T$ be a Hom-configuration in $\C(Q)$ with $|Q_0| = n$. If $\T$ has a simple projective $KQ$-module $P_i$, then the set of modules of the Hom-configuration  $F_{P_i} (\T \setminus P_i)$ in $\C(Q^\prime)$, with $Q^\prime = Q \setminus \{i\}$, is sincere by induction. So the set of modules in $\T \setminus P_i$ has support on every vertex of $Q$ except $i$, by \ref{functorF} (3). But $P_i$ has support on $i$, so $\T \mid_{KQ-\mod}$ is sincere. Suppose now that $\T$ doesn't have any simple projective module. Let $X$ be an element of $\T$. We know that there is a sequence of reflections $R_{i_1}, \ldots, R_{i_k}$ such that $ R_{i_k} \ldots R_{i_1} (X)$ is a simple projective $K \sigma (Q)$-module, with $\sigma (Q) = \sigma_{i_k} \ldots \sigma_{i_1} (Q)$. It follows from what was proved above that $R_{i_k} \ldots R_{i_1} (\T) \mid_{K\sigma (Q) - \mod}$ is sincere, and so the proposition follows immediately from \ref{SincereReflection}.
\end{proof}

\begin{thm}\label{1-1corsincere}
Let $\beta$ be the map from the collection of sincere Hom-free sets in  $KQ-\mod$ to the collection of Hom-configurations in $\C(Q)$ defined as follows. Given a sincere Hom-free set $\T$ in $KQ-\mod$, $\beta (\T) := \T \sqcup U$, where
\[
\U = \{S[1] \, \mid \, S \text{ simple object in } \T^\perp \}.
\] 
Then this map is a bijection, and its inverse is given by the restriction to $KQ-\mod$.
\end{thm}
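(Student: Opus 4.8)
The plan is to show that $\beta$ and the restriction map are mutually inverse bijections, which will follow almost immediately from the results already assembled. First I would observe that $\beta$ is well-defined: by Proposition \ref{sincerity1}, for every sincere Hom-free set $\T$ in $KQ\text{-}\mod$ the set $\beta(\T) = \T \sqcup \U$ is indeed a Hom-configuration in $\C(Q)$, where $\U$ consists of the shifts of the simple objects of the perpendicular category $\T^\perp \simeq KQ'\text{-}\mod$. So $\beta$ maps into the correct target. Next I would note that the restriction-to-$KQ\text{-}\mod$ map, call it $\rho$, is also well-defined on the collection of Hom-configurations: Lemma \ref{Hom-confgsandmodules}(2) guarantees $\rho(\T)$ is non-empty, and Proposition \ref{sincerity2} guarantees $\rho(\T)$ is sincere; it is clearly Hom-free as a subset of a Hom-free set. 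Hence $\rho$ maps Hom-configurations to sincere Hom-free sets in $KQ\text{-}\mod$.

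It then remains to check the two composites are identities. For $\rho \circ \beta = \mathrm{id}$: given a sincere Hom-free set $\T$, the added objects $\U$ all have degree $1$ (they are shifts of modules), so none of them is a $KQ$-module in the fundamental domain $\E(Q)$; therefore $\rho(\beta(\T)) = \rho(\T \sqcup \U) = \T$. For $\beta \circ \rho = \mathrm{id}$: given a Hom-configuration $\S$ in $\C(Q)$, write $\S = \T \sqcup \V$ where $\T = \rho(\S) = \S\mid_{KQ\text{-}\mod}$ is sincere (by \ref{sincerity2}) and $\V = \S \setminus \T$ consists of the remaining elements, which therefore lie in $\ind(KQ\text{-}\mod \setminus \I)[1]$. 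Now the uniqueness assertion of Proposition \ref{sincerity1} applies: there is a \emph{unique} Hom-configuration whose restriction to $KQ\text{-}\mod$ is $\T$, and by construction both $\S$ and $\beta(\T)$ are such Hom-configurations, so $\S = \beta(\T) = \beta(\rho(\S))$.

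I do not expect any serious obstacle here; the theorem is essentially a repackaging of Propositions \ref{sincerity1} and \ref{sincerity2} together with Lemma \ref{Hom-confgsandmodules}(2). The only point requiring a word of care is the claim that every element of a Hom-configuration which is not a $KQ$-module has degree exactly $1$ (equivalently lies in $(\mod KQ \setminus \I)[1]$ inside $\E(Q)$) — this is immediate from our identification of $\ind \C(Q)$ with the fundamental domain $\E(Q) = \ind(\mod KQ \cup (\mod KQ \setminus \I)[1])$, so that the decomposition $\S = \T \sqcup \V$ with $\V$ in degree $1$ is forced. Everything else is formal.
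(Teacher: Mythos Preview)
Your proposal is correct and takes essentially the same approach as the paper: the paper's proof is simply the one-line statement ``This follows immediately from \ref{sincerity1} and \ref{sincerity2},'' and you have merely unpacked that into the evident verification that $\beta$ and restriction are mutually inverse. Your invocation of Lemma~\ref{Hom-confgsandmodules}(2) for non-emptiness is harmless but redundant, since sincerity (from Proposition~\ref{sincerity2}) already forces the restricted set to be non-empty.
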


\begin{proof}
This follows immediately from \ref{sincerity1} and \ref{sincerity2}.
\end{proof}

\section{Positive noncrossing partitions}\label{secpositive}

Let $W$ be a finite Coxeter group, $S$ the set of simple generators of $W$ and $T$ the set of reflections. Fix a Coxeter element $c$ in $W$. For $w \in W$, let $l_T (w)$ denote the \textit{absolute length} of $w$, which is the minimum length of $w$ written as a product of reflections. Given $w \in W$, we call a minimum length expression for $w$ written as a product of reflections as \textit{$T$-reduced expression}. This length naturally induces a partial order $\leq_T$ on $W$, which will be called the \textit{absolute order}.

\begin{definition}
The \textit{absolute order} $\leq_T$ is defined as follows: $$u \leq_T v \Leftrightarrow l_T (v) = l_T (u) + l_T (u^{-1} v),$$for all $u, v \in W$.
\end{definition}

Another way to define absolute order is by saying that $u \leq_T v$ if there is a $T$-reduced expression for
$v$ in which an expression for $u$ appears as a prefix.

\begin{definition} \cite{Bessis, BW2}
A \textit{noncrossing partition} associated to $W$ is an element $w \in W$ satisfying $1 \leq_T w \leq_T c$. The poset of noncrossing partitions associated to $W$ is denoted by $NC (W)$.
\end{definition}

We will state here a lemma proved by Reading \cite{Reading} which will be useful later.

\begin{lemma}\cite[Lemma 5.2]{Reading}\label{Readinglemma}
Let $x \leq_T c$, $s$ be a simple reflection and $W_{S \setminus \{s\}}$ be the standard parabolic subgroup generated by every simple reflection but $s$. Then the following are equivalent:
\begin{enumerate}
\item $x \in W_{S \setminus \{s\}}$.
\item Every reflection $t$ in any $T$-reduced expression for $x$ lies in $W_{S \setminus \{s\}}$.
\end{enumerate}
\end{lemma}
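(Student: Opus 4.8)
The plan is to prove Reading's Lemma \ref{Readinglemma} by establishing the implication $(1) \Rightarrow (2)$, since $(2) \Rightarrow (1)$ is immediate (if every reflection in some $T$-reduced expression for $x$ lies in $W_{S \setminus \{s\}}$, then $x$, being the product of those reflections, lies in $W_{S \setminus \{s\}}$). So assume $x \leq_T c$ and $x \in W_{S \setminus \{s\}}$, and let $x = t_1 t_2 \cdots t_k$ be an arbitrary $T$-reduced expression, so $k = l_T(x)$. I want to show each $t_i \in W_{S \setminus \{s\}}$.

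The key tool will be a standard fact about parabolic subgroups of Coxeter groups: for a standard parabolic $W_J$ with $J = S \setminus \{s\}$, and for $w \in W_J$, the absolute length $l_T(w)$ computed in $W$ agrees with the absolute length computed in $W_J$ using only the reflections $T \cap W_J$; equivalently, if $w \in W_J$ and $w = r_1 \cdots r_m$ is a $T$-reduced expression in $W$, then in fact every $r_j$ lies in $T \cap W_J$. This is precisely the statement that parabolic subgroups are ``convex'' for the absolute order, and it can be derived from the geometric description: reflections correspond to (positive) roots, $w \in W_J$ fixes pointwise the subspace spanned by $\{\alpha_t : t \notin J\}$-coordinates appropriately — more precisely, writing $V = V_J \oplus V_J^{\perp'}$ compatibly with the parabolic, any reflection $t$ appearing in a reduced reflection factorization of $w$ must have its root in the span of the roots of $J$, because the ``moved space'' $\operatorname{Mov}(w) = \operatorname{im}(w - 1)$ is contained in $V_J$ and the roots of the $t_i$ in a $T$-reduced word all lie in $\operatorname{Mov}(w)$ (a theorem of Carter). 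Thus I would cite Carter's lemma on the roots in a $T$-reduced expression together with the fact that $\operatorname{Mov}(w) \subseteq \operatorname{span}\{\alpha_r : r \in J\}$ for $w \in W_J$.

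Concretely, the steps are: (i) recall Carter's result that if $x = t_1 \cdots t_k$ is $T$-reduced then the roots $\alpha_{t_1}, \dots, \alpha_{t_k}$ are linearly independent and span $\operatorname{Mov}(x) = \operatorname{im}(x - \mathrm{id})$; (ii) observe that for $x \in W_J$ the operator $x - \mathrm{id}$ annihilates the orthogonal complement of $V_J := \operatorname{span}\{\alpha_r : r \in J\}$ and maps $V_J$ into $V_J$, hence $\operatorname{Mov}(x) \subseteq V_J$; (iii) conclude $\alpha_{t_i} \in V_J$ for all $i$; (iv) invoke the standard fact that a reflection $t$ whose root lies in $V_J$ is itself an element of $W_J$ (the reflections of $W$ with root in the parabolic subspace $V_J$ are exactly $T \cap W_J$). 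Then $t_i \in W_J = W_{S \setminus \{s\}}$ for all $i$, which is $(2)$. Note the hypothesis $x \leq_T c$ is not actually needed for this direction, but it is harmless to keep it; in the source, Reading states it in this form for convenience since noncrossing partitions are exactly the elements below $c$.

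The main obstacle is not logical difficulty but deciding how much of the root-system machinery to import versus cite. The cleanest route is to treat steps (i) and (iv) as known structural facts about finite Coxeter groups (Carter's lemma and the description of reflections in a parabolic), and reduce the proof to the short linear-algebra observation in step (ii). An alternative, more self-contained approach avoiding geometry would be an induction on $l_T(x)$ using the ``exchange''-type property that if $x \leq_T c$ then for any simple factorization one can move factors around within the interval $[1,c]_{\leq_T}$; but carrying that out rigorously is more delicate, so I would prefer the geometric argument sketched above. Since the excerpt only needs to \emph{state} this lemma (it is Reading's, quoted verbatim), in the paper itself the proof is simply a citation to \cite[Lemma 5.2]{Reading}, and the sketch above is how one would reconstruct it.
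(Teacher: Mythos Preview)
The paper does not prove this lemma at all; it is stated with a direct citation to \cite[Lemma 5.2]{Reading} and used as a black box. You correctly recognize this in your final paragraph. Your reconstructed argument via Carter's characterization of $l_T(w)$ as $\dim\operatorname{Mov}(w)$, together with the containment $\operatorname{Mov}(x)\subseteq V_J$ for $x\in W_J$ and the identification of reflections in $W_J$ with those whose root lies in $V_J$, is a standard and correct way to establish the result, and your observation that the hypothesis $x\leq_T c$ is superfluous for this particular equivalence is also accurate.
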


Let now $W_Q$ be the Coxeter group associated to the simply laced Dynkin quiver $Q$, and fix a Coxeter element $c = s_{i_1} \ldots s_{i_n}$ adapted to the quiver $Q$ with respect to sinks, i.e., $i_1$ is a sink of $Q$, and $i_k$ is a sink of the quiver $\sigma_{i_{k-1}} \ldots s_{i_2} s_{i_1} (Q)$, for each $k \geq 2$. The cardinality of $NC (W_Q)$ is given by the Catalan number associated to $Q$ (cf. \cite{Bessis}), which is bigger than the number of Hom-configurations in the quotient category $\mathcal{C} (Q)$. For instance, it is very easy to check that in type $A_3$ there are only $5$ Hom-configurations whereas the number of noncrossing partitions is $14$. 

\begin{definition} \cite{Reading}
A noncrossing partition which is not contained in any proper standard parabolic subgroup is said to be a \textit{positive noncrossing partition}.
\end{definition}

It was proved in \cite{Reading} that the number of positive noncrossing partitions is given by the so called \textit{positive Fuss-Catalan number} $C^+ (W_Q)$, which is defined as follows (see \cite{FZ2}):
\[
\prod_{i=1}^n \frac{e_i + h - 1}{e_i + 1},
\] 
where $h$ is the Coxeter number of $W_Q$ and $e_1, \ldots, e_n$ its exponents. 

The following table (cf. table $4$ in \cite{FZ2}) shows the explicit formulas for Dynkin type.

\begin{center}
  \begin{tabular}{| c || c | c | c | c | c |}
    \hline
    $Q$ & $A_n$ & $D_n$ & $E_6$ & $E_7$ & $E_8$\\ \hline
    $C^+ (W_Q)$ & $\frac{1}{n+1} \binom{2n}{n}$ & $\frac{3n-4}{n} \binom{2n-3}{n-1}$ & $418$ & $2431$ & $17342$ \\
    \hline
  \end{tabular}
\end{center}

In order to prove that there is a bijection between positive noncrossing partitions and Hom-configurations in $\C(Q)$ we will need to use the braid group action on the set of exceptional sequences of a fixed length. This action can also be called mutations of exceptional sequences. 

We will now recall the notion of this braid group action and some useful facts. For more details we refer the reader to \cite{Bill}.

Given an exceptional sequence $E$ in $KQ-\mod$, let $C(E)$ denote the smallest full subcategory of $KQ-\mod$ which contains $E$ and is closed under extensions, kernels and cokernels.

 Let $B_r$ be the braid group on $r$ strings, with generators $\sigma_1, \ldots, \sigma_{r-1}$ satisfying the braid relations $\sigma_i \sigma_j  = \sigma_j \sigma_i$ if $|i-j| \geq 2$ and $\sigma_i \sigma_{i+1} \sigma_i = \sigma_{i+1} \sigma_i \sigma_{i+1}$.

\begin{prop} \cite{Bill} \label{mutationsexcseqproperties}
The following holds:
\begin{enumerate}
\item Given an exceptional sequence $E$ in $KQ-\mod$, $C(E)$ is equivalent to $KQ^\prime-\mod$ where $Q^\prime$ is a quiver with no oriented cycles and with the number of vertices given by the length of $E$.
\item If $(X,Y)$ is an exceptional sequence in $KQ-\mod$ then there are unique indecomposable modules $R_Y \, X, L_X \, Y$ such that $ (Y, R_Y \, X), (L_X \, Y, X)$ are exceptional sequences in $C(X,Y)$. 
\item Let $E = (X_1, \ldots X_r)$ be an exceptional sequence and $1 \leq i < r$. Then\linebreak $(X_1, \ldots X_{i-1}, X_{i+1}, Y, X_{i+2}, \ldots, X_r)$ is an exceptional sequence in $C(E)$ if and only if $Y \simeq R_{X_{i+1}} \, X_i$. \\
Analogously,  $ (X_1, \ldots X_{i-1}, Z, X_{i}, X_{i+2}, \ldots, X_r)$ is an exceptional sequence in $C(E)$ if and only if $Z \simeq L_{X_{i}} \, X_{i+1}$.
\item The braid group $B_r$ acts on the set of exceptional sequences of length $r$ by
\[
\sigma_i (X_1, \ldots, X_r) = (X_1, \ldots X_{i-1}, X_{i+1}, R_{X_{i+1}} \, X_i, X_{i+2}, \ldots, X_r),
\]
\[
\sigma_i^{-1} (X_1, \ldots, X_r) = (X_1, \ldots X_{i-1},L_{X_{i}} \, X_{i+1} , X_{i}, X_{i+2}, \ldots, X_r).
\]
\item The braid group action preserves the product of the corresponding reflections in the Weyl group. 
\item The braid group $B_n$ on $n$ strings acts transitively on the set of complete exceptional sequences.
\end{enumerate}
\end{prop}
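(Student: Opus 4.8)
The plan is to prove the six parts in order, by induction on the rank $n$ of $Q$ (equivalently, by peeling off one term of an exceptional sequence at a time), using the theory of perpendicular categories of Geigle--Lenzing and Schofield as the main engine: for an exceptional $KQ$-module $E$, the right perpendicular category $E^\perp$ (and dually ${}^\perp E$) is equivalent to $KQ'-\mod$ for a hereditary algebra of rank $n-1$, with the inclusion exact and inducing isomorphisms on $\Hom$ and $\Ext^1$. (These are classical results of Crawley-Boevey and Ringel, so this is really a reconstruction of their arguments.) For (1) I would induct on the length $r$. When $r=1$, $X$ is a brick with $\End_{KQ}(X)=K$ (as $K$ is algebraically closed) and $\Ext^1_{KQ}(X,X)=0$, so kernels, cokernels and extensions of direct sums of copies of $X$ stay in $\add X$; thus $C(X)=\add X\simeq K-\mod$. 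For the inductive step, $\Hom_{KQ}(E_1,E_j)=0=\Ext^1_{KQ}(E_1,E_j)$ for $j>1$ means $(E_2,\dots,E_r)$ is an exceptional sequence in $E_1^\perp\simeq KQ'-\mod$; by induction it generates a hereditary module category of rank $r-1$ there, and $C(E)$, being the smallest wide (i.e.\ kernel-, cokernel- and extension-closed) subcategory containing $E_1$ and this one, is again a hereditary module category, of rank equal to the length of a complete exceptional sequence in it --- and $(E_1,\dots,E_r)$ is one --- hence $C(E)\simeq KQ'-\mod$ with $r$ vertices and (automatically, over $K$) no oriented cycles.

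Parts (2) and (3) are local statements that I would reduce to rank $2$: by (1), $C(X,Y)\simeq KQ'-\mod$ with two vertices, in which $(X,Y)$ is a complete exceptional sequence, and I would define $R_YX$ by the canonical exact sequence relating $X$ to a power of $Y$ (a universal extension, or its dual, according to which of $\Hom$, $\Ext^1$ between the two modules vanishes) and check directly that it is the unique indecomposable completing $(Y,-)$ to a complete exceptional sequence of $C(X,Y)$, with $L_XY$ dual. For (3), if $Y\in C(E)$ is $\Hom$/$\Ext^1$-orthogonal to $X_1,\dots,X_{i-1},X_{i+2},\dots,X_r$ in the directions forced by exceptionality, then $Y$ lies in the perpendicular category of those terms, which is precisely the embedded copy of $C(X_i,X_{i+1})$, so (3) follows from (2). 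For (4): $\sigma_i$ preserves exceptionality by (2)--(3); $\sigma_i\sigma_j=\sigma_j\sigma_i$ for $|i-j|\ge2$ is immediate since they act on disjoint blocks; and the braid relation restricts to length-$3$ sequences, where both $\sigma_i\sigma_{i+1}\sigma_i$ and $\sigma_{i+1}\sigma_i\sigma_{i+1}$ produce a complete exceptional sequence of $C(X_i,X_{i+1},X_{i+2})$ with the same first two terms, hence with the same (uniquely forced) third term. For (5): the defining exact sequence of $R_YX$ gives $\underline{\dim}(R_YX)=\pm s_{\underline{\dim}Y}(\underline{\dim}X)$, so $s_{R_YX}=s_Ys_Xs_Y$ and therefore $s_Xs_Y=s_Ys_{R_YX}$; invariance of the whole product $s_{X_1}\cdots s_{X_r}$ follows.

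Part (6) --- transitivity of $B_n$ on complete exceptional sequences --- is the genuinely hard statement, and I expect its internal key lemma to be the main obstacle. I would argue by induction on $n$. The crux is to show that any complete exceptional sequence can be braid-transformed to one, $(P,Y_2,\dots,Y_n)$, whose first term is an indecomposable projective module; granting this, $(Y_2,\dots,Y_n)$ is a complete exceptional sequence in $P^\perp\simeq KQ'-\mod$ of rank $n-1$, which by induction $\sigma_2,\dots,\sigma_{n-1}$ (each fixing $P$) can carry to a normal form. One then has to reconcile the normal forms obtained for the various indecomposable projectives $P$: this reduces to showing that all \emph{complete exceptional sequences consisting of projective modules} lie in a single $B_n$-orbit, which holds because such sequences are exactly the orderings $(P_{a_1},\dots,P_{a_n})$ with $a_1,\dots,a_n$ a topological ordering of $Q$, and any two topological orderings are linked by successive transpositions of incomparable vertices, each of which is realized by a single $\sigma_i$ (for incomparable $a,b$ the pair $(P_b,P_a)$ is already exceptional, so $R_{P_b}P_a=P_a$). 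The delicate point that I expect to cost the most work is the first one --- getting a projective into the leading position by braid moves --- since a complete exceptional sequence need not contain any projective term to begin with.
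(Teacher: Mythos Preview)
The paper does not prove this proposition: it is stated with the citation \cite{Bill} (Crawley-Boevey) and used as a black box, so there is no ``paper's own proof'' to compare your attempt against. Your proposal is therefore not competing with anything in the paper; it is a reconstruction of the arguments in the cited references (chiefly Crawley-Boevey and Ringel).

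As a reconstruction, your outline is sound for parts (1)--(5): the perpendicular-category induction for (1), the reduction to rank $2$ for (2)--(3), the ``same first two terms force the same third'' argument for the braid relation in (4), and the reflection computation $s_{R_YX}=s_Ys_Xs_Y$ for (5) are all the standard moves. One small caution: in your treatment of (4), the assertion that $\sigma_i\sigma_{i+1}\sigma_i$ and $\sigma_{i+1}\sigma_i\sigma_{i+1}$ have the same first two terms needs an explicit check (it is not literally obvious from the definitions), though it comes out after writing both sides down.

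For (6) you have correctly located the genuine difficulty. Your scheme --- mutate until a projective sits in the leading slot, induct on $P^\perp$, then link all projective sequences via swaps of incomparable vertices --- is a valid strategy, but the step ``get some projective into position $1$'' is essentially the whole theorem in disguise, and your proposal does not yet contain a mechanism for it. In the original proofs the engine is a rank-$2$ lemma: iterating the single braid generator on an exceptional pair in $C(X,Y)$ runs through \emph{all} exceptional indecomposables of that rank-$2$ category. This is what lets one drag a prescribed module (e.g.\ a fixed simple projective) into the sequence by successive local mutations. If you want to complete (6) along your lines, that rank-$2$ cycling lemma is the missing ingredient you should prove and invoke.
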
 

The notion of exceptional sequences is related to Weyl group theory via the following theorem.

\begin{thm} \cite{IS, IT}\label{excseqreflections} 
Let $c$ be a Coxeter element adapted to $Q$ (with respect to sinks, as above). Given a set of $n$ positive roots $\beta_1, \ldots, \beta_n$,  the sequence $(M_{\beta_1}, \ldots, M_{\beta_n})$ of modules associated to the positive roots (by Gabriel's Theorem), is an exceptional sequence if and only if $t_{\beta_1} \ldots t_{\beta_n} = c$.
\end{thm}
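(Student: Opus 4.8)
The plan is to deduce both implications from the interplay of two transitive group actions: the braid group action on complete exceptional sequences in $KQ\text{-}\mod$ (\ref{mutationsexcseqproperties}(6)) and the Hurwitz action of the braid group $B_n$ on the set of reduced reflection factorisations of $c$. I would begin by pinning down one explicit complete exceptional sequence to serve as a base point. Since $c$ is adapted to $Q$ with respect to sinks, write $c = s_{i_1}\cdots s_{i_n}$ with $i_1$ a sink of $Q$ and $i_k$ a sink of $\sigma_{i_{k-1}}\cdots\sigma_{i_1}(Q)$; then a direct check, using the fact that $\Ext^1$ between two simple modules is governed by the arrows of the relevant quiver, shows that the simple modules $(S_{i_1},\dots,S_{i_n})$ form a complete exceptional sequence, and the associated product of reflections is $t_{\alpha_{i_1}}\cdots t_{\alpha_{i_n}} = s_{i_1}\cdots s_{i_n} = c$.

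For the ``only if'' direction, let $(M_{\beta_1},\dots,M_{\beta_n})$ be an arbitrary complete exceptional sequence. By transitivity (\ref{mutationsexcseqproperties}(6)) there is a braid carrying the base sequence $(S_{i_1},\dots,S_{i_n})$ to it, and since the braid action preserves the product of the corresponding reflections (\ref{mutationsexcseqproperties}(5)), that product is still $c$; hence $t_{\beta_1}\cdots t_{\beta_n} = c$.

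For the ``if'' direction, suppose $\beta_1,\dots,\beta_n$ are positive roots with $t_{\beta_1}\cdots t_{\beta_n} = c$. Since a Coxeter element of a rank $n$ finite Weyl group has absolute length $\ell_T(c) = n$, the tuple $(t_{\beta_1},\dots,t_{\beta_n})$ is a \emph{reduced} reflection factorisation of $c$, and I would invoke the transitivity of the Hurwitz action of $B_n$ on such factorisations to obtain a braid $b$ sending the factorisation $(s_{i_1},\dots,s_{i_n})$ of the base case to $(t_{\beta_1},\dots,t_{\beta_n})$. The crucial point is that the braid action on exceptional sequences lifts this Hurwitz action: the elementary mutation $(X,Y)\mapsto(Y,R_Y X)$ sends the reflection pair $(t_{\underline{dim}\,X}, t_{\underline{dim}\,Y})$ to $(t_{\underline{dim}\,Y},\, t_{\underline{dim}\,Y}\, t_{\underline{dim}\,X}\, t_{\underline{dim}\,Y})$, which one reads off from the short exact sequences relating $X$, $Y$ and $R_Y X$ in $C(X,Y)$ (\ref{mutationsexcseqproperties}(2)--(3)) and the corresponding transformation of dimension vectors. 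Applying $b$ to the base exceptional sequence therefore produces a complete exceptional sequence $(E_1,\dots,E_n)$ with $t_{\underline{dim}\,E_k} = t_{\beta_k}$ for every $k$; since distinct positive roots give distinct reflections, $\underline{dim}\,E_k = \beta_k$, and so $E_k \simeq M_{\beta_k}$ by Gabriel's Theorem. Thus $(M_{\beta_1},\dots,M_{\beta_n})$ is a complete exceptional sequence.

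The main obstacle is the transitivity of the Hurwitz action on reduced reflection factorisations of a Coxeter element, which is a genuinely nontrivial input (going back to Bessis \cite{Bessis}, and for Weyl groups to Deligne); the lifting of this action to exceptional sequences, though conceptually clear, also demands the bookkeeping of dimension vectors under mutation indicated above. An alternative, more self-contained route --- closer to the perpendicular-category techniques used elsewhere in this paper --- would be to argue by induction on $n$: put $M_{\beta_n}$ at the end, pass to the left perpendicular category ${}^\perp M_{\beta_n}\simeq KQ'\text{-}\mod$ for a Dynkin quiver $Q'$ with $n-1$ vertices (a Schofield-type equivalence, cf.\ \ref{Schofieldcor}), identify the induced Coxeter element of $KQ'$ with the appropriate factor of $c$ relative to $t_{\beta_n}$, and apply the inductive hypothesis. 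The delicate point there is to check that ``$M_{\beta_n}$ may legitimately be taken last in an exceptional sequence'' corresponds exactly to ``$c\, t_{\beta_n}$ lies below $c$ in the absolute order and the remaining roots lie in the perpendicular subcategory'', which is essentially the content of \cite{IT}.
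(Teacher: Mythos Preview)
The paper does not actually prove this theorem: it is quoted from \cite{IS, IT}, and the only remark made is that the forward implication follows from \ref{mutationsexcseqproperties}(5). Your ``only if'' direction reproduces exactly that observation, so there is nothing to compare there.

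Your ``if'' direction is correct and is essentially the Igusa--Schiffler argument: Hurwitz transitivity on reduced reflection factorisations of $c$ (the nontrivial input you flag, due to Deligne for Weyl groups and Bessis in general) together with the compatibility between the braid action on exceptional sequences and the Hurwitz action on reflection tuples. Your verification of that compatibility is fine: from \ref{mutationsexcseqproperties}(5) one has $t_{\underline{dim}\,X}\,t_{\underline{dim}\,Y} = t_{\underline{dim}\,Y}\,t_{\underline{dim}\,R_Y X}$, whence $t_{\underline{dim}\,R_Y X} = t_{\underline{dim}\,Y}\,t_{\underline{dim}\,X}\,t_{\underline{dim}\,Y}$, which is precisely the Hurwitz move. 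The base-point check that $(S_{i_1},\dots,S_{i_n})$ is exceptional is also correct under the paper's reversed convention, since the sink-adapted ordering ensures that in $Q$ there is no arrow $i_k \to i_l$ for $l>k$, so $\Ext^1_{KQ}(S_{i_k},S_{i_l})=0$.

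Your alternative inductive route via perpendicular categories is closer in spirit to \cite{IT}; either approach is acceptable, but since the paper treats this as a black box there is no ``paper's proof'' to prefer.
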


We note that the implication from left to right in \ref{excseqreflections} follows from \ref{mutationsexcseqproperties} (5).

We are now able to prove the following theorem.

\begin{thm}\label{+venoncrossing}
There is a bijection between the positive noncrossing partitions and the sincere Hom-free sets in $KQ-\mod$.
\end{thm}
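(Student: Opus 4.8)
The plan is to build a bijection between positive noncrossing partitions of $W_Q$ and sincere Hom-free sets in $KQ\text{-mod}$ by passing through exceptional sequences and using Theorem~\ref{excseqreflections}. Given a noncrossing partition $w$ with $1 \leq_T w \leq_T c$, pick a $T$-reduced expression $w = t_{\beta_1}\cdots t_{\beta_k}$; since $w \leq_T c$, this extends to a $T$-reduced expression $c = t_{\beta_1}\cdots t_{\beta_k} t_{\beta_{k+1}}\cdots t_{\beta_n}$, so by Theorem~\ref{excseqreflections} the sequence $(M_{\beta_1},\ldots,M_{\beta_n})$ is a complete exceptional sequence in $KQ\text{-mod}$, and in particular its prefix $(M_{\beta_1},\ldots,M_{\beta_k})$ is an exceptional sequence. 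I would define the candidate map by sending $w$ to the \emph{set} $\{M_{\beta_1},\ldots,M_{\beta_k}\}$; since an exceptional sequence has pairwise zero Hom in one direction and the braid group action (Proposition~\ref{mutationsexcseqproperties}) lets us reorder, one shows that any exceptional sequence whose associated product of reflections is a noncrossing partition can be mutated into one with vanishing Hom in both directions, hence the underlying set is Hom-free.

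The key steps, in order: (i) show the map $w \mapsto \{M_{\beta_1},\ldots,M_{\beta_k}\}$ is well-defined, i.e. independent of the chosen $T$-reduced expression for $w$ and of the completion to a $T$-reduced expression for $c$ --- this follows because $B_k$ acts transitively on exceptional sequences spanning the wide subcategory $C(M_{\beta_1},\ldots,M_{\beta_k}) \simeq KQ'\text{-mod}$ by Proposition~\ref{mutationsexcseqproperties}(6), and the braid action preserves the product of reflections by Proposition~\ref{mutationsexcseqproperties}(5), so the wide subcategory and hence the Hom-free set attached to it (its simple objects, say) depends only on $w$; (ii) verify the resulting Hom-free set is sincere precisely when $w$ is positive --- here I would use Lemma~\ref{Readinglemma}: $w$ lies in no proper standard parabolic $W_{S\setminus\{s\}}$ iff no reflection in any $T$-reduced expression for $w$ lies in $W_{S\setminus\{s\}}$, which translates, via Gabriel's theorem and the fact that $W_{S\setminus\{s\}}$ corresponds to the subquiver avoiding the vertex $s$, into the statement that the supports of the $M_{\beta_i}$ collectively cover every vertex; (iii) construct the inverse: given a sincere Hom-free set $\T = \{X_1,\ldots,X_k\}$, order it into an exceptional sequence (possible since a Hom-free set in $KQ\text{-mod}$ that is part of a Hom-configuration can be ordered into an exceptional sequence, as in the proof of Proposition~\ref{Hom-confgs-exc.seq}), extend it to a complete exceptional sequence, and read off $w = t_{\beta_1}\cdots t_{\beta_k} \leq_T c$; sincerity of $\T$ gives positivity of $w$ by step (ii), and one checks the two constructions are mutually inverse up to the reordering freedom absorbed by the braid action.

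The main obstacle I expect is step (ii), the dictionary between ``$w$ not in any proper standard parabolic'' and ``$\T$ is sincere.'' One must be careful that the standard parabolic subgroups of $W_Q$ are indexed by the \emph{simple} reflections, which correspond to the vertices of $Q$, and then argue that a reduced reflection factorization $w = t_{\beta_1}\cdots t_{\beta_k}$ has all its $t_{\beta_i}$ inside $W_{S\setminus\{s\}}$ exactly when each root $\beta_i$ is supported away from the vertex $s$ --- this uses that the reflections of the parabolic $W_{S\setminus\{s\}}$ are precisely the $t_\beta$ for $\beta$ a positive root of the sub-root-system omitting $\alpha_s$, combined with Reading's Lemma~\ref{Readinglemma} to pass from ``some reduced expression'' to ``every reduced expression.'' Then $\mathrm{supp}(M_{\beta_i}) = \mathrm{supp}(\beta_i)$ by Gabriel's theorem, so $\bigcup_i \mathrm{supp}(M_{\beta_i}) = Q_0$ iff for every $s$ there is some $\beta_i$ supported at $s$ iff $w$ is contained in no $W_{S\setminus\{s\}}$, i.e. iff $w$ is positive. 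A secondary technical point is confirming that the underlying set of the exceptional sequence is genuinely Hom-free in both directions; this is handled by noting $C(M_{\beta_1},\ldots,M_{\beta_k}) \simeq KQ'\text{-mod}$ and applying Lemma~\ref{Hom-confgsandmodules}(1) inside this wide subcategory to select the simple objects, which form the canonical Hom-free representative of the $B_k$-orbit.
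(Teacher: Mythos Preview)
Your overall strategy matches the paper's: pass through exceptional sequences via Theorem~\ref{excseqreflections}, associate to $w$ the simple objects of the wide subcategory $C(E)$, and use Lemma~\ref{Readinglemma} to translate positivity into sincerity. The paper also constructs the inverse exactly as you do, by ordering a sincere Hom-free set into an exceptional sequence and reading off the prefix of $c$.

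There is, however, a genuine inconsistency in your definition of the forward map. You first declare $\varphi(w) = \{M_{\beta_1},\ldots,M_{\beta_k}\}$ and claim this set is Hom-free because one can ``mutate into one with vanishing Hom in both directions.'' But mutation changes the modules, so the underlying set changes too; the set $\{M_{\beta_1},\ldots,M_{\beta_k}\}$ is \emph{not} Hom-free in general and depends on the chosen $T$-reduced expression (take $w=c$: one expression gives the simples, another gives an arbitrary complete exceptional sequence). You silently fix this in step~(i) by switching to ``its simple objects,'' which is the correct definition and the one the paper uses. You should commit to that from the outset.

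Two smaller points. First, your sincerity argument in step~(ii) checks that $\bigcup_i \operatorname{supp}(M_{\beta_i}) = Q_0$, but the map outputs the simples of $C(E)$, not the $M_{\beta_i}$; you need the (easy) observation that $\operatorname{supp}(C(E)) = \operatorname{supp}(E)$, which the paper states explicitly. Second, for well-definedness the paper invokes the Ingalls--Thomas bijection $cox$ between finitely generated wide subcategories and $NC(W_Q)$: since $cox(C(E)) = t_{\beta_1}\cdots t_{\beta_r} = u$ for any exceptional sequence in $C(E)$, two $T$-reduced expressions for $u$ give the same wide subcategory and hence the same simples. Your braid-transitivity argument in step~(i) gets there too, but is less direct (you need transitivity on complete exceptional sequences in $C(E)$, i.e.\ that any two exceptional sequences of length $r$ with the same product actually span the same $C(E)$, which is essentially what $cox$ encodes).
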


\begin{proof}
A restriction of the map defined in the proof of \cite[Thm. 7.3]{BRT} will give us the required bijection. 

Let the map $\varphi$ from $NC^+ (W_Q)$ to the set of sincere Hom-free sets in $KQ-\mod$ be defined as follows. Given a positive noncrossing partition $u$ with absolute length $r$, there is a $T$-reduced expression for $c$ which has a $T$-reduced expression $t_{\beta_1} \ldots t_{\beta_r}$ for $u$ as a prefix. By \ref{excseqreflections}, the indecomposable modules corresponding to the reflections in this $T$-reduced expression for $c$ give rise to a complete exceptional sequence, and so in particular, $E = (E_1, \ldots, E_r)$, where $E_i$ denotes the indecomposable module associated to $t_{\beta_i}$, is an exceptional sequence. By \ref{mutationsexcseqproperties} (1) we have that  $C(E)$ is equivalent to $KQ^\prime-\mod$ where $Q^\prime$ is a quiver with $r$ vertices and no oriented cycles. We define $\varphi (u)$ to be the set of simple objects $S^\prime := \{S_1^\prime, \ldots S_r^\prime\}$ in $C(E)$. 

Obviously, $S^\prime$ is a Hom-free set in $KQ-\mod$. Suppose, for a contradiction, that $S^\prime$ is not sincere. Observe that the support of $C(E)$, i.e., the support of the modules in $C(E)$, is the same as the support of $E$. Hence, $E$ is not sincere either. But then $u = t_{\beta_1} \ldots t_{\beta_r}$ would lie in the parabolic subgroup generated by the simple roots appearing in the $\beta_i$, for $1\leq i \leq r$, when they are written
as linear combinations of the simple roots. This subgroup is a proper parabolic subgroup since $E$ is not sincere, which contradicts the fact that $u$ is a positive noncrossing partition. Hence $\varphi (u)$ is indeed a sincere Hom-free set in $KQ-\mod$. 

In order to check that this map is well defined we recall some results from \cite{IT}. In this paper the authors give a bijection, called $cox$, between the set of finitely generated wide subcategories of $KQ-\mod$ and $NC (W_Q)$. A wide subcategory is, by definition, an exact abelian subcategory closed under extensions. Any finitely generated wide subcategory $\A$ of $KQ-\mod$ is of the form $\A = KQ^\prime-\mod$, where $Q^\prime$ is a finite acyclic quiver (cf. \cite[Cor. 2.22]{IT}). Given a finitely generated wide subcategory $\A$, $cox (\A)$ is defined to be $t_{S^\prime_1} \ldots t_{S^\prime_k}$, where $(S^\prime_1, \ldots, S^\prime_k)$ are the simple objects of $\A$, ordered into an exceptional sequence. By \cite[Lemma 3.10]{IT}, $cox (\A) = t_{F_1} \ldots t_{F_r}$, for any exceptional sequence $(F_1, \ldots, F_r)$ in $\A$. 

Let $t_{\gamma_1} \ldots t_{\gamma_r}$ be another $T$-reduced expression for $u$, and $E^\prime = (E^\prime_1, \ldots, E^\prime_r)$ be the corresponding exceptional sequence. Note that $C(E)$ and $C(E^\prime)$ are finitely generated wide subcategories of $KQ-\mod$. We have $cox (C(E)) = t_{\beta_1} \ldots t_{\beta_r} = u = t_{\gamma_1} \ldots t_{\gamma_r} = cox (C (E^\prime))$. Because $cox$ is an injective map, we have $C(E) = C(E^\prime)$ and so $\varphi$ is well defined.  

In order to prove that $\varphi$ is an injective map, let $u, v \in NC^+ (W)$ be such that $\varphi (u) = \varphi (v) = S^\prime$. Then, in particular, $u$ and $v$ must have the same absolute length, say $r$. Let $u = t_{\beta_1} \ldots t_{\beta_r}$ and $v= t_{\gamma_1} \ldots t_{\gamma_r}$ be $T$-reduced expressions. Let $E$ and $E^\prime$ be the corresponding exceptional sequences of $u$ and $v$, respectively. We know $S^\prime$ can be ordered into an exceptional sequence in $KQ-\mod$, so let us now view $S^\prime$ as such a sequence rather than just a set of modules. Due to the transitive action of the braid group $B_r$ in $C(E)$, $S^\prime$ can be obtained from $E$ by a sequence of mutations in $C(E)$. Analogously, $E^\prime$ can be obtained from $S^\prime$ by a sequence of mutations in $C(E^\prime)$. Note that all of these mutations can be seen as mutations in $KQ-\mod$. So we have a sequence of mutations in $KQ-\mod$ taking $E^\prime$ to $E$. It follows then by \ref{mutationsexcseqproperties} (5) that $u = t_{\beta_1} \ldots t_{\beta_r} = t_{\gamma_1} \ldots t_{\gamma_r} = v$, as we wanted.

To prove that $\varphi$ is surjective let $\T$ be a sincere Hom-free set in $KQ-\mod$ and $\T \sqcup \U$ be the corresponding Hom-configuration (\ref{sincerity1}). We can choose a refinement $\leq$ of $\preceq$ such that $X \leq Y$ if $X$ has degree $0$ and $Y$ degree $1$. If we order the elements of $\T \sqcup \U$ with respect to this refinement, we obtain an exceptional sequence where the first $k$ terms are the modules, using \ref{Hom-confgs-exc.seq}. Assume $(X_1, \ldots, X_{k}, X_{k+1}, \ldots, X_n)$ is this ordering. Then $(X_1, \ldots, X_{k}, \overline{X}_{k+1}, \ldots, \overline{X}_n)$ is an exceptional sequence in $KQ-\mod$, by \ref{differentnotionexcseq}. By \ref{excseqreflections}, we have
\begin{equation}\label{c}
c = t_{X_1} \ldots t_{X_k} t_{\overline{X}_{k+1}} \ldots t_{\overline{X}_n}.
\end{equation}
Let $u = t_{X_1} \ldots t_{X_k}$. By \eqref{c}, $u$ is a noncrossing partition. Suppose $u$ is not positive. Then $u \in W_{S \setminus \{s\}}$, for some simple reflection $s$. By \ref{Readinglemma}, $t_{X_i} \in W_{S \setminus \{s\}}$, for all $1 \leq i \leq k$. This means that $\T$ doesn't have support at the vertex associated to the simple reflection $s$, which contradicts the hypothesis. Hence $u$ is a positive noncrossing partition.

Since $\T$ is a Hom-free set with $k$ elements, it follows from \ref{Hom-confgsandmodules} that $\T$ is the set of simple objects of $C (\T)$, so $\varphi (u) = \T$, and we are done. 
\end{proof}

\begin{cor}\label{numberHom}
The number of Hom-configurations in $\C(Q)$ is given by the positive Fuss-Catalan number.
\end{cor}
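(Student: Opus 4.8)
The plan is to obtain the count by composing the bijections already established in the previous two sections and then invoking Reading's enumeration. First, Theorem~\ref{1-1corsincere} gives a bijection $\beta$ from the collection of sincere Hom-free sets in $KQ-\mod$ onto the collection of Hom-configurations in $\C(Q)$, with inverse the restriction to $KQ-\mod$; in particular the two collections have the same cardinality.

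Next, Theorem~\ref{+venoncrossing} supplies a bijection $\varphi$ between the set $NC^+(W_Q)$ of positive noncrossing partitions of $W_Q$ and the set of sincere Hom-free sets in $KQ-\mod$. Composing $\beta\circ\varphi$ therefore yields a bijection from $NC^+(W_Q)$ onto the set of Hom-configurations in $\C(Q)$, so the number of Hom-configurations in $\C(Q)$ equals $|NC^+(W_Q)|$.

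Finally, I would appeal to Reading's theorem \cite{Reading}, recalled in Section~\ref{secpositive}, which asserts that $|NC^+(W_Q)|$ equals the positive Fuss-Catalan number $C^+(W_Q)=\prod_{i=1}^{n}\frac{e_i+h-1}{e_i+1}$, where $h$ is the Coxeter number and $e_1,\dots,e_n$ the exponents of $W_Q$. Chaining the three facts gives the claim, and the explicit formulas in Dynkin type follow from the table in Section~\ref{secpositive}. All of the substantive work has already been carried out in Theorems~\ref{1-1corsincere} and~\ref{+venoncrossing} and in \cite{Reading}, so there is no genuine obstacle here; the only point to check is that the ``sincere Hom-free sets in $KQ-\mod$'' occurring in the two theorems are literally the same collection, which is immediate from their statements.
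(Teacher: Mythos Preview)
Your proposal is correct and is exactly the argument the paper intends: the corollary is stated without proof immediately after Theorem~\ref{+venoncrossing}, and follows by composing the bijections of Theorems~\ref{1-1corsincere} and~\ref{+venoncrossing} with Reading's enumeration of $NC^+(W_Q)$ recalled earlier in Section~\ref{secpositive}.
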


\section{Riedtmann combinatorial configurations}\label{secriedtmannconf}

In this section we give a link between Hom-configurations in $\C(Q)$ and the notion of configurations introduced by Riedtmann.

\begin{definition}\label{defRie}
A set $\T$ of isomorphism classes of indecomposable objects of $\D^b(Q)$ is called a \textit{(Riedtmann) combinatorial configuration} if it satisfies the following properties:
\begin{enumerate}
\item $\Hom (X,Y) = 0$ for all $X, Y$ in $\T$, $X \ne Y$,
\item For all $Z \in \ind \, \D^b(Q)$, there exists $X \in \T$ such that $\Hom (Z, X) \ne 0$.
\end{enumerate}
A combinatorial configuration $\T$ is said to be \textit{$\tau [2]$-periodic} (or just \textit{periodic}) if for every object $X$ in $\T$, we have $\tau^k X [2k] \in \T$ for all $k \in \ZZZ$.
\end{definition}

Riedtmann proved that these configurations are $\tau [2]$-periodic in the cases when $Q$ is of type $A$ or $D$ (cf. \cite{Riedtmann, Riedtmann2}).

We will only consider periodic combinatorial configurations and our aim is to prove that they are in bijection with Hom-configurations in $\C(Q)$.

\begin{lemma}\label{RiedtmannisHomconfg}
If $\T$ is a periodic combinatorial configuration, then the restriction of $\T$ to $\E (Q)$, viewed as a set of objects in $\C(Q)$, is a Hom-configuration in $\C(Q)$. 
\end{lemma}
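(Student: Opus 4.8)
The plan is to show that $\T' := \T \cap \E(Q)$, regarded as a set of objects in $\C(Q)$, is a Hom-free set, and then invoke \ref{Hom-config-BLR} to conclude it is a Hom-configuration. The first observation is that since $\T$ is a periodic combinatorial configuration, it is a union of $\tau[2]$-orbits, and each such orbit meets the fundamental domain $\E(Q)$ exactly once; hence $\T'$ has exactly as many elements as there are $\tau[2]$-orbits contained in $\T$. So I must check two things: that $\T'$ is Hom-free in $\C(Q)$, and that $|\T'| = n$.

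For the Hom-free property, take $X, Y \in \T'$ with $X \ne Y$. By definition of $\Hom$ in the orbit category,
\[
\Hom_{\C(Q)}(X,Y) \simeq \bigoplus_{k \in \ZZZ} \Hom_{\D^b(Q)}(X, \tau^k Y[2k]).
\]
By \ref{homC(Q)}(a), all summands with $k \ne 0,1$ vanish, so only the terms for $k = 0$ and $k = 1$ survive. The $k=0$ term is $\Hom_{\D^b(Q)}(X,Y)$, which is zero because $X,Y$ are distinct elements of the combinatorial configuration $\T$ (property (1) of \ref{defRie}), together with the fact that $X \ne Y$ as objects of $\D^b(Q)$. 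For the $k=1$ term, $\Hom_{\D^b(Q)}(X, \tau Y[2]) \simeq \Hom_{\D^b(Q)}(Y[1], X)$; now $Y[1] = \tau^{-1}(\tau Y[2])[\,\cdot\,]$ — more precisely $Y[1]$ lies in the $\tau[2]$-orbit of $\tau Y[2]$ shifted appropriately — and in any case $Y[1]$ is an indecomposable object of $\D^b(Q)$. If this Hom-space were nonzero, then in particular $X \ne Y[1]$ (since one has degree in $\{0,1\}$ adjusted and the degrees do not match for $X \in \E(Q)$), but we would need to rule it out using the combinatorial configuration axioms: the point is that $Y[1]$ itself may not lie in $\T$, so I should instead argue as in \ref{homC(Q)}(b) using the partial order $\preceq$, or alternatively observe that $\tau Y[2] \in \T$ by periodicity, so $\Hom_{\D^b(Q)}(X,\tau Y[2]) = 0$ by property (1) of \ref{defRie} provided $X \ne \tau Y[2]$, and $X \ne \tau Y[2]$ since $X \in \E(Q)$ has degree $0$ or $1$ while $\tau Y[2]$ has degree $\geq 2$ (as $Y \in \E(Q)$). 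This last route is the cleanest: periodicity puts $\tau Y[2]$ back into $\T$, and a degree count separates it from $X$.

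For the count $|\T'| = n$: this is where the main work lies, and I expect it to be the main obstacle. One approach is to use property (2) of \ref{defRie} — for every indecomposable $Z$ there is $X \in \T$ with $\Hom(Z,X) \ne 0$ — to pin down the structure of $\T$ enough to count its orbits. Applying this with $Z$ ranging over the simple modules, or over a slice, together with the Hom-freeness, should force exactly one orbit "near" each vertex. Alternatively, and perhaps more efficiently, I can quote that Riedtmann combinatorial configurations in Dynkin type all have the same cardinality as a $\tau$-slice of the relevant stable category (this is part of Riedtmann's classification, and is used in \cite{BLR}), which gives that $\T$ contains exactly $n$ objects in a fundamental domain for $\tau[2]$; or I can defer: since $\T'$ is already shown to be a Hom-free set in $\C(Q)$, \ref{Hom-config-BLR} says it is a Hom-configuration if and only if $|\T'| = n$, so it suffices to show $\T'$ cannot be properly enlarged inside $\C(Q)$ as a Hom-free set — and that follows from property (2) of \ref{defRie}, because any indecomposable $Z \in \E(Q)$ has $\Hom_{\D^b(Q)}(Z, X) \ne 0$ for some $X \in \T$, whence $\Hom_{\C(Q)}(Z, X') \ne 0$ for the representative $X' \in \T'$ of the orbit of $X$, so $Z$ cannot be added to $\T'$ while keeping it Hom-free. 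Thus $\T'$ is a maximal Hom-free set, i.e. a Hom-configuration. I would present the argument in this order: (i) $\T'$ is Hom-free via \ref{homC(Q)} and periodicity plus a degree count; (ii) $\T'$ is maximal via property (2) of \ref{defRie}; (iii) conclude, optionally remarking via \ref{Hom-config-BLR} that $|\T'| = n$.
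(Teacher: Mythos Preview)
Your proposal is correct and follows essentially the same approach as the paper: Hom-freeness of $\T'$ comes from property (1) of \ref{defRie} together with periodicity (so that $\tau Y[2]\in\T$), and maximality comes from property (2) of \ref{defRie} together with periodicity (so that the $X\in\T$ hitting $Z$ lies in the $\tau[2]$-orbit of some $X'\in\T'$). One small imprecision: your claim that $\tau Y[2]$ has degree $\geq 2$ fails when $Y$ is a projective module (then $\tau Y[2]=I[1]$ has degree $1$), but the conclusion $X\ne\tau Y[2]$ still holds simply because $\E(Q)$ is a fundamental domain for $\tau[2]$, so $Y\in\E(Q)$ forces $\tau Y[2]\notin\E(Q)$.
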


\begin{proof}
Let us denote the restriction of $\T$ to $\E (Q)$ by $\T^\prime$. It follows from property $1$ of \ref{defRie} and from the periodicity of $\T$ that $\T^\prime$ is a Hom-free set in $\C(Q)$. The maximality follows also from the fact that $\T$ is periodic, since this means that every object $Y$ in $\T$ is of the form $Y = \tau^i Y^\prime [2i]$ for some $i \in \ZZZ$ and some object $Y^\prime \in \T^\prime$, and from property $2$ of \ref{defRie}.
\end{proof}

Using \ref{RiedtmannisHomconfg} and the fact that the number of Hom-configurations in $\C(Q)$ is given by the positive Fuss-Catalan number, it is enough to show that the number of periodic combinatorial configurations is also given by the positive Fuss-Catalan number to get the bijection between these two notions of configurations.

In order to check this, we use some results presented in \cite{BLR}. Namely the authors introduce another notion of configuration, which we shall refer to as \textit{BLR-configuration}. Such configurations are periodic (cf. \cite[Prop.1.1]{BLR}), and they are a subset of the set of periodic combinatorial configurations (for more details see the introduction in \cite{BLR}). 

\begin{thm}\label{theoremconfigurations}
There is a bijection between the following objects:
\begin{enumerate}
\item BLR-configurations in $\D^b(Q)$;
\item periodic combinatorial configurations in $\D^b(Q)$;
\item Hom-configurations in $\C(Q)$.
\end{enumerate}
\end{thm}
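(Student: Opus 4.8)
The plan is to establish the three-way bijection by proving a chain of implications, using the counting argument as the linchpin. We already have from Lemma~\ref{RiedtmannisHomconfg} a map from periodic combinatorial configurations to Hom-configurations in $\C(Q)$, namely restriction to the fundamental domain $\E(Q)$; and by Corollary~\ref{numberHom} the number of Hom-configurations equals the positive Fuss-Catalan number $C^+(W_Q)$. Since BLR-configurations form a subset of the periodic combinatorial configurations, it suffices to show that the number of BLR-configurations in $\D^b(Q)$ is also $C^+(W_Q)$. Granting this, the containment ``BLR-configurations $\subseteq$ periodic combinatorial configurations'' together with the injectivity of the restriction map of Lemma~\ref{RiedtmannisHomconfg} (which I would verify separately, as below) forces all three finite sets to have the same cardinality $C^+(W_Q)$, and each of the natural inclusions/restrictions between them is then automatically a bijection.

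First I would recall from \cite{BLR} the enumeration of BLR-configurations. The relevant input is the classification result in that paper, which parametrizes BLR-configurations in $\D^b(Q)$ (equivalently, up to the $\tau[2]$-action, configurations in the stable category of the associated self-injective algebra) in a way that yields an explicit count; I would cite the precise proposition of \cite{BLR} giving that this count equals $C^+(W_Q)$ (or reduce it to their formula and match it against the table of positive Fuss-Catalan numbers above). Next I would argue that the restriction map $\Phi\colon \T \mapsto \T \cap \E(Q)$ of Lemma~\ref{RiedtmannisHomconfg}, restricted to periodic combinatorial configurations, is injective: a periodic $\T$ is determined by $\T \cap \E(Q)$ because periodicity gives $\T = \{ \tau^k X[2k] : X \in \T\cap \E(Q),\ k \in \ZZZ\}$, and $\E(Q)$ is a fundamental domain for the $\tau[2]$-action on $\ind\,\D^b(Q)$, so distinct representatives in $\E(Q)$ yield distinct orbits. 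I would also note $\Phi$ maps BLR-configurations into the image of the periodic ones since BLR-configurations are periodic.

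Putting the pieces together: we have inclusions of sets
\[
\{\text{BLR-configs}\} \ \hookrightarrow\ \{\text{periodic comb.\ configs}\} \ \xrightarrow{\ \Phi\ }\ \{\text{Hom-configs in }\C(Q)\},
\]
with $\Phi$ injective on the middle set. The left-hand set has size $C^+(W_Q)$ by \cite{BLR}, and the right-hand set has size $C^+(W_Q)$ by Corollary~\ref{numberHom}. Hence the composite $\{\text{BLR-configs}\} \to \{\text{Hom-configs}\}$ is an injection between finite sets of equal cardinality, so it is a bijection; consequently the inclusion of BLR-configurations into periodic combinatorial configurations is surjective (so the two notions coincide set-theoretically inside $\D^b(Q)$), and $\Phi$ is a bijection on each. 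This gives all the asserted bijections in Theorem~\ref{theoremconfigurations}.

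The main obstacle I anticipate is isolating from \cite{BLR} a clean statement that the number of BLR-configurations is exactly the positive Fuss-Catalan number; depending on how that paper is organized, one may instead have to extract their combinatorial classification (e.g.\ in terms of admissible sections or of certain module sets) and perform the count type-by-type, checking agreement with the table of $C^+(W_Q)$ values for $A_n$, $D_n$, $E_6$, $E_7$, $E_8$. A secondary, more routine point is being careful that ``periodic combinatorial configuration'' and ``BLR-configuration'' are compared as subsets of the \emph{same} ambient set of $\tau[2]$-orbits, so that ``subset of equal cardinality'' legitimately yields equality; this is where the explicit description of $\E(Q)$ as a fundamental domain, already set up before Proposition~\ref{homC(Q)}, does the bookkeeping.
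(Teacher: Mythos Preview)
Your proposal is correct and matches the paper's approach essentially line for line: the paper sets up exactly the same chain $\{\text{BLR-configs}\}\subseteq\{\text{periodic comb.\ configs}\}\hookrightarrow\{\text{Hom-configs in }\C(Q)\}$, invokes Corollary~\ref{numberHom} for the right-hand count, and reduces everything to showing that BLR-configurations are counted by $C^+(W_Q)$. Your anticipated obstacle is precisely what the paper encounters --- there is no single clean statement in \cite{BLR}, and the verification is carried out type by type (pedigrees/binary trees for $A_n$, a citation of \cite[Prop.~7.2]{BLR} for $D_n$, and for $E_6,E_7,E_8$ an explicit count from the lists of isomorphism and equivalence classes in \cite{BLR}, multiplying by the size of the relevant $\tau$- and reflection-orbits).
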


\begin{remark}
We have just seen that the following hold: 
\begin{align*}
\{ \text{BLR-configurations} \} &\subseteq \{ \text{periodic combinatorial configurations} \}\\
                                &\hookrightarrow \{ \text{Hom-configurations in } \C(Q) \}.
\end{align*}
\end{remark}

Hence, the only thing we need to check to prove \ref{theoremconfigurations} is that the number of BLR-configurations is given by the positive Fuss-Catalan number, i.e., the number of Hom-configurations in $\C(Q)$, by \ref{numberHom}.\\

If $Q$ is a quiver of type $A$, Bretscher, L\"aser and Riedtmann prove that BLR-configurations in $\D^b(Q)$ are in bijection with pedigrees with $n$ vertices (cf. main theorem in introduction and section 6.2 in \cite{BLR}). By definition, a \textit{pedigree} is a subtree of the oriented tree:
\[
\xymatrix@C=0.1cm{&&&&&&& \vdots \\
\bullet \ar[dr]_{\beta} & & \bullet & & \bullet \ar[dr]_{\beta} & & \bullet & & \bullet \ar[dr]_{\beta} & & \bullet & & \bullet \ar[dr]_{\beta} & & \bullet \\
& \bullet \ar[drr]_{\beta} \ar[ur]_{\alpha} &&&& \bullet \ar[ur]_{\alpha} &&&& \bullet \ar[drr]_{\beta} \ar[ur]_{\alpha} &&&& \bullet \ar[ur]_{\alpha} \\
&&& \bullet \ar[urr]_{\alpha} \ar[drrrr]_{\beta} &&&&&&&& \bullet \ar[urr]_{\alpha} \\
&&&&&&& 1 \ar[urrrr]^{\alpha}} 
\]

which contains the lowest vertex $1$.

Pedigrees with $n$ vertices are in 1-1 correspondence with binary trees. We recall that a binary tree is a rooted tree (trees are drawn growing downwards, by convention) in which each vertex $i$ has at most two children, i.e., vertices adjacent to $i$ which are below it in the tree. Each child of a vertex is designated as its left or right child.

The correspondence is described as follows: the lowest vertex $1$ corresponds to the root, and $x$ is a right (left) child of $y$ if and only if we have $\xymatrix{y \ar[r]^\alpha & x}$ ($\xymatrix{x \ar[r]^\beta & y}$) in the pedigree.

It is known that the number of binary trees with $n$ vertices is given by $\frac{1}{n+1} \binom{2n}{n}$, which is the positive Fuss-Catalan number $C^+ (A_n)$. So we are done in type $A$.\\   

It follows from \cite[Prop.7.2]{BLR} that the number of BLR-configurations in type $D_n$ is also given by the positive Fuss-Catalan number (see end of section 7.5 in \cite{BLR}). So it remains to check type $E$.

In \cite{BLR}, the authors define two classes of BLR-configurations, the isomorphism classes and the equivalence classes. Isomorphisms of BLR-configurations come from automorphisms of the translation quiver, which are given by $\tau^k$, with $k \in \ZZZ$, or by reflection in a horizontal line in type $E_6$. Two BLR-configurations are said to be \textit{equivalent} if they are isomorphic or one is isomorphic to the reflection of the other at a vertical line. 

In types $E_7$ and $E_8$ there is no reflection along a horizontal line. Hence, each isomorphism class has $h-1$ elements: a representative $\T$ and $\tau^k (\T)$, with $1 \leq k \leq h-2$, as $\tau^{h-1} (\T) = \T$. Hence the number of BLR-configurations is given by multiplying the number of isomorphism classes with $h-1$. In \cite{BLR} the authors state that there are $143$ and $598$ isomorphism classes for type $E_7$ and $E_8$ respectively. Since $h-1$ equals $17$ for type $E_7$ and $29$ for type $E_8$, the number of BLR-configurations is $2431$ and $17342$ for type $E_7$ and $E_8$ respectively, which is the positive Fuss-Catalan number, as we wanted.

For type $E_6$ there are $17$ equivalence classes. The authors of \cite{BLR} list a representative for each of these equivalence classes. One can see that $12$ of these equivalence classes are invariant under the vertical reflection. Thus there are $12 + 2 \times 5 = 22$ isomorphism classes. One can easily check that $6$ of these isomorphism classes are invariant under the horizontal reflection. 
Therefore, there are $6 + 16 \times 2 = 38$ BLR-configurations up to $\tau$-translation. Hence there are $38 \times (h-1) = 38 \times 11 = 418$ BLR-configurations in total, which is the positive Fuss-Catalan number for type $E_6$.

\section{Riedtmann's bijection for type $A$}\label{secriedtmann}

The notion of classical noncrossing partitions of $\{1, \ldots, n\}=[n]$ was introduced by Kreweras \cite{Kr} in 1972 and it is defined as follows.

\begin{definition} \cite{Kr}
A classical noncrossing partition of $[n]$ is a partition $\P = \{\B_1, \ldots, \B_m\}$ of the set $\{1, \ldots, n\}$, where we call $\B_i$ a block of $\P$ for $1 \leq i \leq m$, with the property that if $1 \leq a < b < c < d \leq n$, with $a, c \in \B_i$ and $b, d \in \B_j$, then $\B_i = \B_j$.  
\end{definition}

One can interpret this as being a partition of the vertices of a regular $n$-gon, whose vertices are ordered clockwise from $1$ to $n$, such that the convex hulls of its blocks are disjoint from each other. 

The set of classical noncrossing partitions of $[n]$ form a poset under refinement of partitions, and we denote this poset by $NC (n)$. It was proved by Biane that $NC (n)$ and $NC (A_{n-1})$ are isomorphic posets:

\begin{thm} \cite[Thm 1]{Biane}
Given a permutation $\pi$ of $[n]$ write it as a product of disjoint cycles (including $1$-cycles) and let $\{\pi\}$ denote the partition of $[n]$ given by these cycles. The map $\pi \mapsto \{\pi\}$ is a poset isomorphism between $NC (A_{n-1})$ and $NC (n)$.
\end{thm}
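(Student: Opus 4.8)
The plan is to exhibit the map $\pi\mapsto\{\pi\}$ explicitly, check that it lands in $NC(n)$, and then build an inverse, thereby establishing a bijection; finally one verifies that it is order-preserving in both directions. First I would recall the standard combinatorial model: identify $NC(A_{n-1})$ with the interval $[e,c]$ in the absolute order, where $c=(1\,2\,\cdots\,n)$ is the chosen Coxeter element, so that an element $w\leq_T c$ is a permutation admitting a $T$-reduced expression as a product of transpositions that is a prefix of a reduced expression for $c$. The key classical fact I would invoke (or prove in a line) is that for $w\leq_T c$, the absolute length $\ell_T(w)$ equals $n$ minus the number of cycles of $w$, and that $w\leq_T c$ if and only if each cycle of $w$, read in the cyclic order induced by $c=(1\,2\,\cdots\,n)$, uses its elements in increasing cyclic order and the cyclic supports of distinct cycles are non-nested. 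This last condition is precisely the noncrossing condition on the partition $\{\pi\}$: if $a<b<c<d$ with $a,c$ in one cycle and $b,d$ in another, the two cyclic intervals would cross, contradicting $\pi\leq_T c$. Hence $\pi\mapsto\{\pi\}$ maps $NC(A_{n-1})$ into $NC(n)$.

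Next I would construct the inverse. Given a classical noncrossing partition $\P=\{\B_1,\dots,\B_m\}$ of $[n]$, turn each block $\B_i=\{b_1<b_2<\cdots<b_k\}$ into the cycle $(b_1\,b_2\,\cdots\,b_k)$ (the identity on singletons) and let $\pi_\P$ be the product of these cycles over all blocks; since the blocks are disjoint the product is well-defined and independent of order. One checks $\{\pi_\P\}=\P$ immediately, and conversely $\pi_{\{\pi\}}=\pi$ for $\pi\in NC(A_{n-1})$ because the noncrossing condition forces each cycle of such a $\pi$ to list its elements in increasing order. The only substantive point here is that $\pi_\P$ indeed satisfies $\pi_\P\leq_T c$: this follows from the characterization above, since the noncrossing condition on $\P$ is exactly the combinatorial criterion for membership in $[e,c]$. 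So $\pi\mapsto\{\pi\}$ is a bijection $NC(A_{n-1})\to NC(n)$.

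Finally, for the poset structure, I would show both the map and its inverse are monotone. In $NC(n)$ the order is refinement; in $NC(A_{n-1})$ it is the absolute order. If $u\leq_T v$ with $u,v\leq_T c$, then a $T$-reduced expression for $u$ extends to one for $v$, and multiplying a permutation by a transposition either merges two cycles or splits one; staying inside $[e,c]$ forces the merge to respect the cyclic order, so the cycle partition of $u$ refines that of $v$. Conversely, if $\{\pi\}$ refines $\{\rho\}$ with both noncrossing, one builds a $T$-reduced word for $\pi$ as a prefix of one for $\rho$ by successively splitting blocks along noncrossing chords; each such chord is a transposition and the resulting factorizations are reduced because lengths add (each split increases the cycle count by one). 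Hence $\pi\leq_T\rho$, and the map is a poset isomorphism.

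The main obstacle is the verification that the combinatorial noncrossing condition on $\{\pi\}$ is equivalent to $\pi\leq_T c$ — i.e., correctly setting up and using the description of the interval $[e,c]$ in the absolute order of $S_n$. Once that dictionary is in place, both the bijectivity and the order-compatibility are routine. One should be slightly careful that $c=(1\,2\,\cdots\,n)$ here is the "linear" Coxeter element (the cyclic order on $[n]$ is $1<2<\cdots<n<1$), so that "noncrossing" with respect to the $n$-gon labelled clockwise $1,\dots,n$ matches the cyclic support condition exactly.
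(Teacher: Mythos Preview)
The paper does not prove this statement at all: it is quoted verbatim as a result of Biane \cite[Thm~1]{Biane} and used as a black box, with no argument given. There is therefore nothing in the paper to compare your proposal against.

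That said, your outline is the standard route to Biane's theorem and is essentially sound, with one slip worth flagging. You write that for $\pi\leq_T c$ the cyclic supports of distinct cycles must be ``non-nested''; this is not what you mean, since nested blocks are perfectly allowed in a noncrossing partition. The correct condition, which you state correctly in the very next sentence, is that distinct cycles must be \emph{noncrossing}: one cannot have $a<b<c<d$ with $a,c$ in one cycle and $b,d$ in another. Apart from this wording issue, the dictionary you set up (absolute length equals $n$ minus the number of cycles; $\pi\leq_T c$ iff each cycle is a subsequence of the cyclic order and the cycle supports are pairwise noncrossing) is exactly what is needed, and the bijectivity and order-compatibility follow from it as you indicate.
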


In this section $Q$ denotes the quiver of type $A_n$ with linear orientation:
\[
Q: \xymatrix{n \ar@{->}[r] & n-1 \ar@{->}[r] & \ldots \ar@{->}[r] & 1}.
\]

Riedtmann \cite{Riedtmann} proved that there is a bijection between the set of combinatorial configurations of $\D^b(Q)$ and $NC (n)$. In order to describe this map we need the following notation.

We know there is a bijection between the AR-quiver $\Gamma (\D^b(Q))$ of $\D^b (Q)$ and the stable translation quiver $\ZZZ Q^{op}$, which is defined as follows:
\begin{enumerate}
\item Vertices: $(\ZZZ Q^{op})_0 := \ZZZ \times Q_0^{op}$,
\item Arrows: for vertices $(x,a), (y,b)$ of $\ZZZ Q^{op}$, $\xymatrix{(x,a) \ar@{->}[r] & (y,b)}$ is an arrow in $\ZZZ Q^{op}$ if $x = y$ and $\xymatrix{ a \ar@{->}[r] & b}$ is an arrow in $Q^{op}$ or $y = x+1$ and $\xymatrix{ b \ar@{->}[r] & a}$ is an arrow in $Q^{op}$.
\end{enumerate}

This bijection can be chosen so that the indecomposable projective $P_i$ corresponds to $(1,i)$, for $i \in [n]$. Observe that the indecomposable $KQ$-modules are the objects of $\ZZZ Q^{op}$ written in the form $(i,j)$ with $2 \leq i+j \leq n+1$, with $i \geq 1$. 

Recall that combinatorial configurations of type $A$ are periodic and so by \ref{RiedtmannisHomconfg} they can be regarded as Hom-configurations in $\C(Q)$. Moreover, it was seen in Section $6$ that the map in \ref{RiedtmannisHomconfg}, which is the restriction of a combinatorial configuration to the fundamental domain $\E(Q)$, is in fact a bijection. The composition of this bijection with Riedtmann's map (cf. \cite[2.6]{Riedtmann}) can be described as follows.

\begin{thm} \cite[2.6]{Riedtmann} \label{Riedtbijection}
Let $\P = \{\B_1, \ldots, \B_m\}$ be a classical noncrossing partition of the vertices of a regular $n$-gon, and assume the elements of each $\B_i$ are in numerical order. Given $k \in [n]$, let $\B = \{k_1, \ldots, k_s \}$ be the block that contains $k$. So $k = k_r$ for some $1 \leq r \leq s$. Let $\psi (k_r): = (k_{(r+1) \, \mod \, s} - k_r) \, \mod \, n$. Here we use modular arithmetic using the representatives $\{1, 2, \ldots, l\}$ when working $\mod \, l$. 

Then the set $\{(i, \psi (i)) \mid i \in [n] \}$ is a Hom-configuration in $\C(Q)$ and the map defined this way, which we will call $\gamma$, is a bijection between $NC (n)$ and the set of Hom-configurations in $\C(Q)$.
\end{thm}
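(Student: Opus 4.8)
The plan is to verify directly that the combinatorial recipe $\gamma$ coincides with the composite of Riedtmann's original bijection with the restriction-to-$\E(Q)$ map of \ref{RiedtmannisHomconfg}, and then invoke \ref{theoremconfigurations} to conclude that $\gamma$ is a bijection. More precisely, Riedtmann's map \cite[2.6]{Riedtmann} sends a classical noncrossing partition $\P$ to a $\tau[2]$-periodic combinatorial configuration in $\D^b(Q)$; by \ref{RiedtmannisHomconfg} its restriction to the fundamental domain $\E(Q)$ is a Hom-configuration in $\C(Q)$; and Section \ref{secriedtmannconf} showed this restriction map is a bijection between periodic combinatorial configurations and Hom-configurations. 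So it suffices to check that, under the chosen identification of $\Gamma(\D^b(Q))$ with $\ZZZ Q^{op}$, the object of the restricted configuration lying in the $\tau[2]$-orbit of the $i$-th "slice" is exactly the vertex $(i,\psi(i))$ for each $i \in [n]$.

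First I would recall, following \cite{Riedtmann}, how the combinatorial configuration attached to $\P$ sits inside $\ZZZ Q^{op}$: for each $i$, the block $\B = \{k_1 < \cdots < k_s\}$ containing $i = k_r$ contributes a module whose support and degree are governed by the "gap" from $k_r$ to the next element $k_{(r+1)\bmod s}$ of the block, read cyclically on the $n$-gon. Then I would translate that description into coordinates $(x,a) \in \ZZZ \times Q_0^{op}$ and reduce modulo $\tau[2]$ to land in $\E(Q)$, which for type $A_n$ with linear orientation is an explicit staircase-shaped region; the claim is that the representative of this orbit has second coordinate precisely $\psi(i) = (k_{(r+1)\bmod s} - k_r)\bmod n$. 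This is essentially a bookkeeping computation: one tracks how the vertex coordinates in $\ZZZ Q^{op}$ transform under $\tau[2]$ (which acts as a fixed shift $(x,a) \mapsto (x + c, a)$ for the appropriate constant, since $\tau[2] = \tau^{h-1}$ and $h = n+1$ here) and checks that the "first-return" representative into $\E(Q)$ has the asserted form. Once the coordinate formula for each $i$ is matched, the fact that the whole set $\{(i,\psi(i)) : i \in [n]\}$ is a Hom-configuration follows from \ref{RiedtmannisHomconfg}, and bijectivity of $\gamma$ follows because it equals a composite of two bijections (Riedtmann's map $NC(n) \to \{\text{periodic combinatorial configurations}\}$ and the restriction map of \ref{theoremconfigurations}).

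An alternative, more self-contained route — which I would use if the bookkeeping with Riedtmann's original normalization proves awkward — is to bypass \cite[2.6]{Riedtmann} entirely: define $\gamma(\P) := \{(i,\psi(i)) : i \in [n]\}$ by the stated formula, check \emph{by hand} that it is a Hom-free set of $n$ indecomposable objects in $\C(Q)$ (hence a Hom-configuration by \ref{Hom-config-BLR}), and then argue injectivity of $\gamma$ directly: distinct noncrossing partitions $\P \ne \P'$ produce distinct functions $\psi \ne \psi'$, because the blocks of $\P$ can be reconstructed from $\psi$ by following the "successor" cycles $i \mapsto (i + \psi(i))\bmod n$. Surjectivity would then follow from a cardinality count, $|NC(n)| = |NC(A_{n-1})| = C^+(A_n)$ by the combinatorial identity recalled before Section \ref{secriedtmannconf}, matched against the count of Hom-configurations in \ref{numberHom}. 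The main obstacle in either approach is the same: carefully pinning down the coordinate conventions (the identification $\Gamma(\D^b(Q)) \cong \ZZZ Q^{op}$ with $P_i \leftrightarrow (1,i)$, the description of $\E(Q)$ as a region in $\ZZZ Q^{op}$, and the action of $\tau[2]$) so that the modular-arithmetic formula for $\psi$ drops out cleanly and agrees with Riedtmann's construction; this is where one must be most vigilant about off-by-one errors, the clockwise-versus-counterclockwise orientation of the $n$-gon, and the choice of residue representatives $\{1,\dots,l\}$ rather than $\{0,\dots,l-1\}$.
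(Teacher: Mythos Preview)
Your first approach is exactly the paper's treatment. The paper does not supply a formal proof environment for this theorem: it is stated with attribution \cite[2.6]{Riedtmann}, and the justification is entirely contained in the paragraph immediately preceding it. There the author explains that combinatorial configurations in type $A$ are periodic (Riedtmann), that by \ref{RiedtmannisHomconfg} and \ref{theoremconfigurations} restriction to $\E(Q)$ gives a bijection from periodic combinatorial configurations to Hom-configurations in $\C(Q)$, and that $\gamma$ is by definition the composite of Riedtmann's original map with this restriction. The displayed formula $(i,\psi(i))$ is simply asserted to be the explicit coordinate description of that composite under the chosen identification $\Gamma(\D^b(Q)) \cong \ZZZ Q^{op}$ with $P_i \leftrightarrow (1,i)$; the paper does not carry out the bookkeeping you outline but takes it as a direct transcription of Riedtmann's recipe.

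Your alternative self-contained route (verify Hom-freeness directly, reconstruct $\P$ from $\psi$ for injectivity, and match cardinalities via \ref{numberHom} and $|NC(n)| = C^+(A_n)$) is a genuinely different and valid argument that the paper does not pursue. It has the advantage of not relying on \cite{Riedtmann} or on the type-by-type counting in Section~\ref{secriedtmannconf}, at the cost of requiring an explicit Hom-vanishing computation in $\C(Q)$ for type $A_n$ with linear orientation.
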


\begin{ex}
Consider the noncrossing partition $\P = \{ \{1,3\}, \{2\}, \{4\}\}$ of $\{1, 2, 3, 4\}$. Then the image under $\gamma$ is $\{(1,2),(2,4), (3,2), (4,4)\} = \{ 12, 1[1], 34, 3[1]\}$, with the notation we introduced in \ref{examplenotation} (but note that the quiver we are using here has linear orientation). 
\end{ex}

Our aim is to check that the composition of the bijections in \ref{+venoncrossing} and \ref{1-1corsincere} gives a generalization of this result. 

First, we will give a combinatorial description for the positive noncrossing partitions of type $A$.

\begin{prop}\label{combinatorial+ve}
A classical noncrossing partition of $[n+1]$ is positive if and only if the vertices $1$ and $n+1$ lie in the same block.
\end{prop}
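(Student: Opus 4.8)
The plan is to translate the condition ``not contained in any proper standard parabolic subgroup'' into a concrete statement about cycle structure, using the isomorphism $NC(n+1) \simeq NC(A_n)$ from Biane's theorem together with Reading's Lemma \ref{Readinglemma}. First I would fix the standard Coxeter element $c = s_1 s_2 \cdots s_n$ adapted to the linearly oriented quiver, which under the isomorphism $W_{A_n} \simeq S_{n+1}$ corresponds to the long cycle $(1\,2\,\cdots\,n+1)$. The standard parabolic subgroups $W_{S \setminus \{s_i\}}$ then correspond to the Young subgroups $S_{\{1,\dots,i\}} \times S_{\{i+1,\dots,n+1\}}$, i.e. permutations that do not ``move past'' the gap between $i$ and $i+1$. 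A noncrossing partition $\P$, viewed as a permutation $w_\P \in S_{n+1}$ by making each block into a clockwise cycle, lies in $W_{S \setminus \{s_i\}}$ precisely when no block of $\P$ contains elements on both sides of the cut $\{1,\dots,i\} \mid \{i+1,\dots,n+1\}$ --- equivalently, when every block is entirely contained in $\{1,\dots,i\}$ or entirely in $\{i+1,\dots,n+1\}$.

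The key combinatorial observation is then: a noncrossing partition of $[n+1]$ fails to be contained in \emph{some} such parabolic if and only if for every $i \in \{1,\dots,n\}$ there is a block straddling the cut after $i$. I claim this happens exactly when $1$ and $n+1$ lie in the same block. For the ``if'' direction: if $1$ and $n+1$ are in a common block $\B$, then for every $i$ the block $\B$ contains $1 \leq i$ and $n+1 > i$, so $\B$ straddles every cut, hence $\P$ lies in no proper $W_{S\setminus\{s_i\}}$, and by Lemma \ref{Readinglemma} (or directly, since $NC$ elements lie below $c$) it lies in no proper standard parabolic at all. For the ``only if'' direction, I would argue contrapositively: suppose $1$ and $n+1$ lie in different blocks, say $1 \in \B$ and $n+1 \in \B'$ with $\B \neq \B'$. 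Let $m$ be the largest element of $\B$. Because $\P$ is noncrossing and $1 \in \B$, no block can contain both an element $\leq m$ and an element $> m$: such a block together with $\B$ would form a crossing (an element of the block would be ``trapped'' between $1$ and $m$ while another lies outside the arc). Hence every block lies entirely in $\{1,\dots,m\}$ or entirely in $\{m+1,\dots,n+1\}$, and since $n+1 \notin \B$ we have $m \leq n$, so $m \in \{1,\dots,n\}$ gives a genuine cut; thus $\P \in W_{S \setminus \{s_m\}}$, a proper standard parabolic, so $\P$ is not positive.

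The main obstacle is the noncrossing-ness argument in the ``only if'' direction --- precisely checking that if $1$ and the maximal element $m$ of its block are given, then no other block can straddle the cut after $m$. I would make this rigorous by the standard ``arc'' picture: draw $[n+1]$ on a circle; the block $\B$ containing $1$ has its convex hull, and any block containing both some $a \leq m$ and some $b > m$ would have to cross the hull of $\B$ unless it also contained an element in the ``exterior arc'' --- but one then iterates or uses minimality of the configuration. A cleaner route, which I would probably adopt to avoid case analysis, is to work directly with the permutation/reflection description: using Theorem \ref{excseqreflections} and the identification of reflections $t_{ij}$ with transpositions $(i\,j)$, the condition that $u \leq_T c$ lies in $W_{S\setminus\{s_i\}}$ is, by Lemma \ref{Readinglemma}, equivalent to every transposition in a $T$-reduced expression for $u$ having both indices $\leq i$ or both $> i$; translating a block $\{k_1 < \cdots < k_s\}$ into the product of reflections $(k_1\,k_2)(k_2\,k_3)\cdots(k_{s-1}\,k_s)$ shows the straddling condition is governed exactly by whether a block contains indices on both sides of $i$, and then the statement reduces to the elementary fact that the only ``universally straddling'' situation forced on a noncrossing partition is $1$ and $n+1$ sharing a block.
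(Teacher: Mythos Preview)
Your proposal is correct. The ``only if'' direction (if $1$ and $n+1$ are in different blocks then $\P$ is not positive) is exactly the paper's argument: both of you take $m = k_s$ to be the largest element of the block containing $1$ and observe that noncrossing-ness forbids any block from containing elements on both sides of $k_s$, whence $u \in W_{S\setminus\{s_{k_s}\}}$. Your worry that this is the ``main obstacle'' is misplaced --- the paper dispatches it in one line, and your own justification (a block with $l \leq m$ and $b > m$ would give $1 < l < m < b$ with $1,m$ in one block and $l,b$ in another, a crossing) is already complete; there is no need to ``iterate'' or pass to the reflection description.

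Where you genuinely diverge from the paper is the ``if'' direction (same block $\Rightarrow$ positive). You argue directly and combinatorially: the block containing both $1$ and $n+1$ straddles every cut $\{1,\dots,i\}\mid\{i+1,\dots,n+1\}$, so the corresponding permutation lies in no $W_{S\setminus\{s_i\}}$, hence in no proper standard parabolic (since every proper standard parabolic sits inside some maximal one). This is valid and elementary. The paper instead takes a representation-theoretic route: from a $T$-reduced expression for $u$ it extracts an exceptional sequence $E$ which is visibly sincere, passes to the simple objects of $C(E)$, and then invokes the bijection $\varphi$ of Theorem~\ref{+venoncrossing} to conclude that $u$ is positive. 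Your argument is shorter and self-contained; the paper's argument, while heavier, has the merit of exhibiting explicitly that this direction is compatible with the main bijection $\varphi$, which is useful for the subsequent identification $\rho^{-1}\circ\gamma = f$. A minor point: your appeal to Lemma~\ref{Readinglemma} in this direction is unnecessary --- the containment of proper standard parabolics in maximal ones is immediate, as your parenthetical ``or directly'' already acknowledges.
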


\begin{proof}
Let $\P = \{\B_1, \ldots, \B_m\}$ be a classical noncrossing partition of $[n+1]$ and $u$ be the corresponding noncrossing partition of type $A_n$. 

Suppose $1$ and $n+1$ don't lie in the same block. Let $\B_1$ be the block which contains the vertex $1$, and write $\B_1 = \{1, k_2, \ldots, k_s\}$ in numerical order. By assumption, $k_s \ne n+1$. This block corresponds to the cycle $(1 \, k_2 \, \ldots \, k_s)$, which can be written as a product of reflections (i.e., transpositions in this case):
\[
(1 \, k_2 \, \ldots \, k_s) = (1 \, k_2) (k_2 \, k_3) \ldots (k_{s-1} \, k_s) = t_{\alpha_1 + \ldots + \alpha_{k_2 -1}} t_{\alpha_{k_2} + \ldots + \alpha_{k_3 -1}} \ldots t_{\alpha_{k_{s-1}} + \ldots + \alpha_{k_s-1}}. 
\]
This element belongs to the parabolic subgroup $W_1$ generated by the simple reflections $s_{\alpha_1}, s_{\alpha_2}, \ldots, s_{\alpha_{k_s}-1}$. Note that this subgroup is proper as $s_{\alpha_{k_s}} \not\in W_1$.

Since $\P$ is noncrossing, there are no vertices $l$ and $m$ lying in the same block with $l < k_s < m$. Hence, the cycle $c_i$ corresponding to the block $\B_i$ lies in a parabolic subgroup $W_i$ which does not contain $s_{\alpha_{k_s}}$, and so $u = \prod_{i=1}^m c_i$ belongs to the product of the parabolic subgroups $W_i$ ($1 \leq i \leq m$), which is a proper parabolic subgroup. Hence the noncrossing partition $u$ is not positive.

Now suppose $1$ and $n+1$ are in the same block, say $\B_1$. We have $$\B_1 = \{1, k_{1 , 2}, \ldots, k_{1 , s-1}, n+1\}$$ with $1 < k_{1 , 2} < \ldots < k_{1 , s-1} < n+1$. The corresponding cycle $c_1 = (1 \, k_{1 , 2} \ldots \, k_{1 , s-1} \, n+1)$ can be written in the form:
\[
c_1 = t_{\alpha_1 + \ldots + \alpha_{k_{1 , 2}-1}} t_{\alpha_{k_{1 , 2}} + \ldots + \alpha_{k_{1 , 3}-1}} \cdots t_{\alpha_{k_{1 , s-1}} + \ldots + \alpha_n}.
\]

Note that $l_T (c_1) = s-1$, since $s$ is the length of the cycle (cf. \cite[Prop. 2.3]{Brady}). Hence the product of reflections above is a $T$-reduced expression for $c_1$. Moreover, if $c_i$ is the cycle corresponding to $\B_i$, we have an expression for $u$ as a product of disjoint cycles, $u = \prod_{i=1}^m c_i$, and so $l_T (u) = \sum_{i=1}^m l_T(c_i)$ (cf. \cite[Lemma 2.2]{Brady}).

Let $E$ be the exceptional sequence in $KQ-\mod$ associated to this $T$-reduced expression of $u$, by \ref{excseqreflections}. Due to the $T$-reduced expression for $c_1$, we have that $E$ has support on every vertex of $Q$, i.e., $E$ is sincere. 

Consider $C(E) \simeq KQ^\prime-\mod$, where the number of vertices of $Q^\prime$ equals $r$, the number of terms in $E$ (cf. \cite[Lemma 5]{Bill}). Let $S^\prime = \{S^\prime_1, \ldots, S^\prime_r \}$ be the set of simple objects in $C(E)$, ordered into an exceptional sequence. Due to the transitive action of the set of mutations on complete exceptional sequences in $C(E)$, $E$ can be obtained from $S^\prime$ by a sequence of mutations. Hence $u = t_{\underline{dim} \, S^\prime_1} \ldots t_{\underline{dim} \, S^\prime_r}$, by \ref{mutationexcseqreflection}. 

On the other hand, $supp \, S^\prime = supp \, C(E) = supp \, (E)$, which implies that $S^\prime$ is a sincere Hom-free set in $KQ-\mod$. By \ref{+venoncrossing}, $\varphi^{-1} (S^\prime) = t_{\underline{dim} \, S^\prime_1} \ldots t_{\underline{dim} \, S^\prime_r}$ is a positive noncrossing partition, i.e., $u \in NC^+ (A_n)$, as we wanted.
\end{proof}

The following proposition follows easily from \ref{combinatorial+ve}.

\begin{prop}
Given a classical noncrossing partition $\P = \{\B_1, \ldots, \B_m\}$ of $[n]$, where $1 \in \B_1$, let $f (\P)$ be the partition $\{\B_1^\prime, \ldots, \B_m^\prime\}$ of $[n+1]$ defined as follows:
\begin{equation*}
\B_i^\prime = 
 \begin{cases}
 \B_1 \cup \{n+1\} & \text{ if } i = 1 \\
 \B_i & \text{ if } i \ne 1.
 \end{cases}
\end{equation*}
Then $f(\P)$ is a positive noncrossing partition and $f: NC (n) \rightarrow NC^+ (n+1)$, where $NC^+ (n+1)$ is the image of $NC^+ (A_n)$ under the isomorphism between $NC (A_n)$ and $NC (n+1)$, is a bijection.  
\end{prop}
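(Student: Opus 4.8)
The plan is to verify the two claims in the statement separately: first that $f(\P)$ is a positive noncrossing partition whenever $\P \in NC(n)$, and second that $f$ is a bijection onto $NC^+(n+1)$. For the first claim, I would simply observe that $f(\P)$ is indeed a noncrossing partition of $[n+1]$: adding $n+1$ to the block $\B_1$ containing $1$ cannot create a crossing, since a crossing involving $n+1$ and some $c \in \B_1$ would require indices $a < b < c < n+1$ with $a,c$ in one block and $b$ in another, but $a$ would then have to be $1$ (as $1$ is the smallest element of $\B_1$ and hence $a \geq 1$; if $a>1$ then $a,c\in\B_1$ with $1<a<b<c$ forces a crossing already in $\P$). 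Once $f(\P)$ is known to be a noncrossing partition of $[n+1]$ in which $1$ and $n+1$ lie in the same block, \ref{combinatorial+ve} immediately gives that $f(\P)$ is positive.

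For the bijectivity, I would exhibit the inverse map explicitly. Given a positive noncrossing partition $\Q = \{\B_1^\prime, \ldots, \B_m^\prime\}$ of $[n+1]$ with $1 \in \B_1^\prime$, by \ref{combinatorial+ve} we have $n+1 \in \B_1^\prime$ as well. Define $g(\Q)$ to be the partition of $[n]$ obtained by deleting $n+1$ from the block $\B_1^\prime$ (keeping $\B_1^\prime \setminus \{n+1\}$ as a block, which is nonempty since it contains $1$). One checks that $g(\Q)$ is a noncrossing partition of $[n]$: deleting an element can never create a crossing. Then $f$ and $g$ are mutually inverse by construction: $g(f(\P)) = \P$ because we add $n+1$ to $\B_1$ and then remove it, and $f(g(\Q)) = \Q$ because the block of $\Q$ containing $1$ is precisely $\B_1^\prime = (\B_1^\prime \setminus \{n+1\}) \cup \{n+1\}$. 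This shows $f : NC(n) \to NC^+(n+1)$ is a bijection, where $NC^+(n+1)$ denotes the image of $NC^+(A_n)$ under the Biane isomorphism $NC(A_n) \cong NC(n+1)$, using \ref{combinatorial+ve} to identify this image with the set of noncrossing partitions of $[n+1]$ having $1$ and $n+1$ in a common block.

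I do not expect any serious obstacle here; the proposition is essentially a repackaging of \ref{combinatorial+ve}. The only point requiring a modicum of care is confirming that both $f$ and its candidate inverse genuinely land in the claimed sets — that $f(\P)$ is noncrossing (not just a set partition) and that removing $n+1$ from $\B_1^\prime$ leaves a legitimate block, i.e.\ a nonempty subset — but both are immediate from the fact that $1$ always accompanies the vertex being inserted or deleted. If one wishes to be thorough, one should also remark that the map $f$ is well defined independently of how the blocks are labelled (the block containing $1$ is canonically determined), which is clear.
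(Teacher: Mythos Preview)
Your approach is exactly what the paper intends: it states only that the proposition ``follows easily from \ref{combinatorial+ve}'', and your argument---check that $f(\P)$ is noncrossing, invoke \ref{combinatorial+ve} for positivity, then exhibit the obvious inverse by deleting $n+1$---is precisely the routine verification behind that sentence.

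One small slip to fix: in your noncrossing check you write ``if $a>1$ then $a,c\in\B_1$'', but in the crossing $a<b<c<n+1$ it is $b$ and $n+1$ that lie in $\B_1'$, while $a,c$ lie in some other block $\B_i$. The correct line is: if $a>1$ then $1<a<b<c$ with $1,b\in\B_1$ and $a,c\in\B_i$ is already a crossing in $\P$; and if $a=1$ then $a\in\B_i\cap\B_1$ forces $i=1$, a contradiction. With that correction the argument is complete.
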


\begin{thm}
Let $\rho: NC^+ (A_n) \rightarrow \{ \text{Hom-configurations in } \C(Q)\}$ be the composition of the bijection $\varphi$ in \ref{+venoncrossing} followed by the bijection $\beta$ in \ref{1-1corsincere}. Let $\gamma$ be Riedtmann's bijection (see \ref{Riedtbijection}). Then we have that $\rho^{-1} \circ \gamma =f$.
\end{thm}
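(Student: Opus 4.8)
The plan is to chase a classical noncrossing partition $\P\in NC(n)$ through both sides of the claimed identity and show that the resulting Hom-configurations in $\C(Q)$ coincide. On the one side we have $\gamma(\P)$, described explicitly in \ref{Riedtbijection} as the set $\{(i,\psi(i))\mid i\in[n]\}$ in the coordinates on $\ZZZ Q^{op}$; on the other side we have $\rho(f(\P))=\beta(\varphi(f(\P)))$, where $f(\P)$ is obtained from $\P$ by adjoining $n+1$ to the block containing $1$. Since both maps are already known to be bijections, it suffices to verify the equality $\gamma=\rho\circ f$ on a single element, i.e.\ to compute both sides in the same coordinates and match them term by term.

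First I would make the right-hand side explicit. Given $\P=\{\B_1,\dots,\B_m\}$ with $1\in\B_1$, write $f(\P)=\{\B_1',\dots,\B_m'\}$ with $\B_1'=\B_1\cup\{n+1\}$. By the cycle decomposition used in the proof of \ref{combinatorial+ve}, the positive noncrossing partition $u=\varphi^{-1}(\text{sincere set})$ associated to $f(\P)$ has the $T$-reduced expression coming from expanding each cycle $c_i=(k_{i,1}\,k_{i,2}\,\cdots)$ as a product of the transpositions $t_{\alpha_{k_{i,j}}+\cdots+\alpha_{k_{i,j+1}-1}}$; by \ref{excseqreflections} these reflections give an exceptional sequence $E$ in $KQ$-$\mod$, and $\varphi(u)$ is the set $S'$ of simple objects of $C(E)\simeq KQ'$-$\mod$. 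Then $\beta(\varphi(u))=S'\sqcup\{S[1]\mid S\text{ simple in }(S')^{\perp}\}$, by \ref{1-1corsincere}. The key computational lemma I would isolate and prove is that, for the linearly oriented $A_n$ quiver, the modules $S'$ are exactly the indecomposables $(i,\psi(i))$ with $\psi(i)\le$ (the distance to the end of the support), and the shifted injective-complement objects $S[1]$ account precisely for the indices $i$ with $(i,\psi(i))$ of degree $1$; combining these recovers $\{(i,\psi(i))\mid i\in[n]\}=\gamma(\P)$.

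The cleanest way to organize the term-by-term match is via the block structure. Fix a block $\B=\{k_1<\dots<k_s\}$ of $\P$. If this is the block of $\P$ containing $1$, then in $f(\P)$ it becomes $\{1=k_1<\dots<k_s<n+1\}$, contributing the cycle $(1\,k_2\,\cdots\,k_s\,n{+}1)$ and hence the reflections $t_{\alpha_1+\cdots+\alpha_{k_2-1}},\dots,t_{\alpha_{k_s}+\cdots+\alpha_n}$; a block not containing $1$ contributes $(k_1\,\cdots\,k_s)$ and the reflections $t_{\alpha_{k_1}+\cdots+\alpha_{k_2-1}},\dots,t_{\alpha_{k_{s-1}}+\cdots+\alpha_{k_s-1}}$. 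In each case the positive roots involved are intervals $[k_j,k_{j+1}-1]$, matching exactly the data $\psi(k_j)=(k_{j+1}-k_j)\bmod n$ read off from the block in \ref{Riedtbijection}. So the strategy is: (i) identify the support-intervals arising from each block of $f(\P)$ with the pairs $(k_j,\psi(k_j))$; (ii) identify $C(E)$ with $KQ'$-$\mod$ for the quiver $Q'$ whose simple objects are these interval modules, using \ref{mutationsexcseqproperties}(1); (iii) show that the degree-$0$ part of $\gamma(\P)$ is $S'$ and the degree-$1$ part is the shift of the simples of $(S')^\perp$, matching $\beta$. Step (iii) is where the injective module bookkeeping in \ref{sincerity1} and the explicit form of $\psi$ on the cyclic "wrap-around" element of each block (the one producing $\psi$-value that sends a module past the support, i.e.\ to degree $1$ in $\E(Q)$) must be reconciled.

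I expect the main obstacle to be step (iii): matching the degree-$1$ objects. On Riedtmann's side the degree-$1$ indecomposables in $\gamma(\P)$ are exactly the $(i,\psi(i))$ lying outside the module region $2\le i+j\le n+1$, which happens precisely for the "last" element $k_s$ of each block (where $\psi(k_s)$ wraps around modulo $n$). On the $\rho$-side the degree-$1$ objects are $\{S[1]\mid S\text{ simple in }(S')^{\perp}\}$, and one must show $(S')^{\perp}\simeq KQ'$-$\mod$ has exactly one simple per block of $\P$ not containing the wrap-around data — equivalently, that the "missing" reflections (the ones we dropped when passing from $c_i=(k_1\cdots k_s)$ to the prefix) correspond under Gabriel's theorem to the injective complements picked out by \ref{sincerity1}. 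I would handle this by induction on $n$ (or on the number of blocks), peeling off the block of $1$ and invoking \ref{mainlemma}/\ref{functorF} to reduce to a smaller linearly oriented $A$-quiver, so that the combinatorics of $f$ and $\psi$ both restrict compatibly. Once the block-by-block dictionary is set up, the remaining verifications are routine arithmetic with the modular formula for $\psi$.
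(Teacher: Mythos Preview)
Your approach computes $\rho\circ f$ and compares with $\gamma$; the paper instead computes $\rho^{-1}\circ\gamma$ and compares with $f$. The backward direction is substantially lighter: $\beta^{-1}$ is just restriction to $KQ$-$\mod$, and by the surjectivity argument in \ref{+venoncrossing}, $\varphi^{-1}$ of a sincere Hom-free set $\{X_1,\dots,X_k\}$ is simply the product $t_{X_1}\cdots t_{X_k}$ of reflections in exceptional-sequence order. So the paper only has to (a) identify which $(i,\psi(i))$ lie in $KQ$-$\mod$ via the arithmetic condition $i+\psi(i)\le n+1$, (b) note that reflections coming from distinct blocks commute, and (c) multiply the transpositions within each block to recover exactly the cycles of $f(\P)$. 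No description of $C(E)$, no perpendicular category, no induction.

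Your route can be made to work, but you are making it harder than it needs to be. In step (ii) you implicitly use that the interval modules appearing in the chosen $T$-reduced expression of $f(\P)$ are already pairwise Hom-orthogonal (so that they \emph{are} the simple objects of $C(E)$); this is true and follows from the noncrossing condition, but it should be stated and checked. More importantly, the ``main obstacle'' you anticipate in step (iii) is not an obstacle at all: once you have shown that the degree-$0$ part of $\gamma(\P)$ coincides with $\varphi(f(\P))$, the degree-$1$ parts agree automatically by the uniqueness clause of \ref{sincerity1}, since $\gamma(\P)$ and $\beta(\varphi(f(\P)))$ are both Hom-configurations whose restriction to $KQ$-$\mod$ is the same sincere Hom-free set. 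There is no need to compute the simples of $(S')^{\perp}$ explicitly, and the proposed induction via \ref{mainlemma}/\ref{functorF} is unnecessary.
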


\begin{proof}
We recall that here we are using the notation for the stable translation quiver $\ZZZ Q^{op}$. Observe that the element $(i, j) \in \ZZZ Q^{op}$ with $i \geq 1$ and $i+j \leq n+1$ corresponds to the indecomposable module $M_{ij}$ whose dimension vector is given by 
\begin{equation*}
(\underline{dim} \, M_{ij})_l =
 \begin{cases}
 1 & \text{ if } l \in \{ i, i+1, \ldots, i+j-1\} \\
 0 & \text{ otherwise. }
 \end{cases}
\end{equation*}
Note also that this indecomposable module corresponds to the transposition $(i \, i+j) = t_{\alpha_i + \ldots + \alpha_{i+j-1}}$. 

Let $\P = \{\B_1, \ldots, \B_m\} \in NC (n)$. Let $\leq$ be a refinement to a total order of the partial order $\preceq$ such that the elements of the Hom-configuration $\gamma (\P)$ ordered with respect to this refinement form  an exceptional sequence where the modules are the first elements. We can assume this refinement satisfies the following property: if the indecomposable objects $(i, \psi (i))$ and $(j, \psi (j))$ have the same degree, then $(i, \psi (i)) \leq (j, \psi (j))$ if $i < j$. This means that the modules in $\gamma (\P)$ are ordered into an exceptional sequence in the following way:
\[
M_{i_1, \psi (i_1)}, \ldots, M_{i_k, \psi (i_k)},
\]
where $1 \leq i_1 < \ldots < i_k$ and $i_j + \psi (i_j) \leq n+1$. So we have 
\begin{equation}\label{image}
\rho^{-1} (\gamma (\P)) = t_{\underline{dim} M_{i_1,\psi (i_1)}} \cdots t_{\underline{dim} M_{i_k, \psi (i_k)}}.
\end{equation}

Note that if $(i, \psi (i)), (j, \psi (j)) \in \gamma (\P) \cap KQ-\mod$ are such that $i$ and $j$ belong to different blocks, then the corresponding reflections commute.

Hence we can write 
\begin{equation}\label{varphi}
\rho^{-1} (\gamma (\P )) = \prod_{j= 1}^m \prod_{\substack{i \in \B_j \\ i + \psi (i) \leq n+1}} t_{\underline{dim} \, M_{i\psi (i)}},
\end{equation}
where the product corresponding to each block $\B_j$ respects the order in \eqref{image}.

Given this, consider the block $\B_1 = \{k_{11} = 1, k_{12}, \ldots, k_{1,r_1-1}, k_{1r_1} \}$ of $\P$. This block gives rise to the following elements in $\gamma (\P)$:
\[
(1, k_{12}-1), (k_{12}, k_{13}-k_{12}), \ldots , (k_{1,r_1-1}, k_{1r_1} - k_{1,r_1-1}), (k_{1r_1}, n-k_{1r_1} +1).
\]
We denote this set of the elements by $\T_1$.

The corresponding indecomposable objects lie in $KQ-\mod$, since they are of the form $(i,j)$ with $i \geq 1$ and $i + j \leq n+1$.

The reflections associated to the elements of $\T_1$ are:
\[
(1 \, k_{12}), (k_{12} \, k_{13}), \ldots , (k_{1r_1} \, n+1), 
\]
respectively.

The part of the product in \eqref{varphi} corresponding to $\B_1$ is the following product: 
\begin{align*}
\prod_{i \in \B_1} t_{\underline{dim} \, M_{i, \psi (i)}} &= (1 \, k_{12}) (k_{12} \, k_{13}) \ldots (k_{1r_1} \, n+1) \\
                                                          &= (1 \, k_{12} \, k_{13} \, \ldots k_{1r_1} \, n+1).
\end{align*}                                                          

Let $\B_j$ be any other block of $\P$, and write $\B_j = \{k_{j1}, k_{j2}, \ldots, k_{jr_j} \}$, with $k_{j1} < k_{j2} < \ldots < k_{jr_j}$. Following the same argument as before, this block gives rise to the following objects in $\gamma (\P)$:
\[
(k_{j1}, k_{j2} -k_{j1}), (k_{j2}, k_{j3}-k_{j2}), \ldots, (k_{j,r_j-1}, k_{jr_j}-k_{j,r_j-1}), (k_{jr_j}, n-k_{jr_j}+k_{j1}).
\]

All these objects but the last one lie in $KQ-\mod$ (note that $(k_{jr_j}, n-k_{jr_j} + k_{j1}) \not\in KQ-\mod$ since $k_{jr_j} + (n-k_{jr_j}+k_{j1}) = n + k_{j1} \geq n+2$ as $k_{j1} \ne 1$).

We have:
\begin{align*}
\prod_{\substack{i \in \B_j \\ i + \psi (i) \leq n+1}} t_{\underline{dim} \, M_{i,\psi (i)}} & = (k_{j1} \, k_{j2}) (k_{j2} \, k_{j3}) \ldots (k_{j,r_j-1} \, k_{jr_j}) \\
& = (k_{j1} \, k_{j2} \, \ldots k_{j,r_j-1} \, k_{jr_j}).
\end{align*}

Hence the blocks of $\rho^{-1} (\gamma (\P))$ are $\{k_{j1}, \ldots, k_{jr_j}\} = \B_j$, with $2 \leq j \leq m$, and $\{1, k_{12}, \ldots, k_{1r_1}, n+1\} = \B_1 \cup \{n+1\}$, which allow us to conclude that $\rho^{-1} \circ \gamma = f$, as we wanted.
\end{proof}

\section{Mutations of Hom-configurations}\label{secmutations}

In this section we give a definition of mutation of Hom-configurations in $\C(Q)$, which will rely on \ref{mainprop}. The first thing we need to do is to generalize, in the obvious way, this result. 

\begin{cor}\label{genmainres}
Let $\{X_1, \ldots, X_k \}$ be a Hom-free set  in $\ind \, \C (Q)$. Then 
\[
\,^\perp \{X_1, \ldots, X_k \}^\perp = \{Y \in \C (Q) \, \mid \, \Hom_{\C (Q)} (X_i, Y) = 0 = \Hom_{\C (Q)} (Y, X_i), \forall i \in [k] \}
\]
is equivalent to $\C (Q^\prime)$ where $Q^\prime$ is a disjoint union of quivers of Dynkin type and whose number of vertices is $n-k$.
\end{cor}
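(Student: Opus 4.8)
The plan is to prove Corollary \ref{genmainres} by induction on $k$, using Proposition \ref{mainprop} as the base case $k=1$ and as the inductive step. First I would set up the induction: for $k=1$ the statement is exactly \ref{mainprop}. Now suppose the result holds for Hom-free sets of size $k-1$, and let $\{X_1, \ldots, X_k\}$ be a Hom-free set in $\ind\,\C(Q)$ with $k \geq 2$.

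The key idea is to peel off $X_k$. By \ref{mainprop} applied to the indecomposable object $X_k$, the category $\,^\perp X_k^\perp_Q$ is equivalent, via some equivalence which I will call $E$, to $\C(Q^{\prime\prime})$ where $Q^{\prime\prime}$ is a disjoint union of Dynkin quivers with $n-1$ vertices. Since $\{X_1, \ldots, X_{k-1}\}$ is Hom-free and each $X_i$ (for $i < k$) satisfies $\Hom_{\C(Q)}(X_i, X_k) = 0 = \Hom_{\C(Q)}(X_k, X_i)$, each $X_i$ lies in $\,^\perp X_k^\perp_Q$, so the set $\{E(X_1), \ldots, E(X_{k-1})\}$ is a Hom-free set of $k-1$ indecomposable objects in $\C(Q^{\prime\prime})$. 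One subtlety here is that $Q^{\prime\prime}$ is a disjoint union of Dynkin quivers rather than connected; but $\C$ of a disjoint union is the product of the $\C$'s of the components, and a Hom-free set distributes over the components, so the inductive hypothesis (applied componentwise, or stated so as to allow disconnected $Q$) gives that $\,^\perp\{E(X_1), \ldots, E(X_{k-1})\}^\perp_{Q^{\prime\prime}}$ is equivalent to $\C(Q^\prime)$ for a disjoint union $Q^\prime$ of Dynkin quivers with $(n-1)-(k-1) = n-k$ vertices. Finally I would check that the equivalence $E$ identifies $\,^\perp\{X_1, \ldots, X_k\}^\perp_Q$ with $\,^\perp\{E(X_1), \ldots, E(X_{k-1})\}^\perp_{Q^{\prime\prime}}$: indeed $Y \in \,^\perp\{X_1,\ldots,X_k\}^\perp_Q$ iff $Y \in \,^\perp X_k^\perp_Q$ and $\Hom(X_i,Y) = \Hom(Y,X_i) = 0$ for $i < k$, and since $E$ is an equivalence of (full sub)categories it preserves and reflects the vanishing of these Hom-spaces. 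Composing the two equivalences finishes the induction.

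I expect the main obstacle to be bookkeeping rather than anything deep: one must make sure the inductive hypothesis is phrased to allow $Q$ a disjoint union of Dynkin quivers (which \ref{mainprop} already outputs), and one must confirm that passing to the full subcategory $\,^\perp X_k^\perp_Q$ and then taking a further perpendicular subcategory inside it agrees with taking the perpendicular subcategory of $\{X_1, \ldots, X_k\}$ directly in $\C(Q)$ — this is where fullness of the subcategories and the fact that $E$ is an equivalence are used. Since the statement says "in the obvious way", I would keep the writeup short, essentially just recording this induction.

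\begin{proof}
We argue by induction on $k$. For $k = 1$ this is precisely Proposition \ref{mainprop} (which remains valid, by applying it to each connected component, when $Q$ is allowed to be a disjoint union of Dynkin quivers). Suppose $k \geq 2$ and that the statement holds for Hom-free sets of size $k-1$. Let $\{X_1, \ldots, X_k\}$ be a Hom-free set in $\ind\,\C(Q)$. By \ref{mainprop}, there is an equivalence $E \colon \,^\perp X_k^\perp_Q \to \C(Q^{\prime\prime})$, where $Q^{\prime\prime}$ is a disjoint union of quivers of Dynkin type with $n-1$ vertices. For each $i < k$ we have $\Hom_{\C(Q)}(X_i, X_k) = 0 = \Hom_{\C(Q)}(X_k, X_i)$, so $X_i \in \,^\perp X_k^\perp_Q$; hence $\{E(X_1), \ldots, E(X_{k-1})\}$ is a set of $k-1$ pairwise non-isomorphic indecomposable objects of $\C(Q^{\prime\prime})$, and it is Hom-free since $E$ is an equivalence. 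An object $Y$ of $\,^\perp X_k^\perp_Q$ lies in $\,^\perp\{X_1, \ldots, X_k\}^\perp_Q$ if and only if $\Hom_{\C(Q)}(X_i, Y) = 0 = \Hom_{\C(Q)}(Y, X_i)$ for all $i < k$, which, as $E$ is fully faithful, holds if and only if $E(Y)$ lies in $\,^\perp\{E(X_1), \ldots, E(X_{k-1})\}^\perp_{Q^{\prime\prime}}$. Thus $E$ restricts to an equivalence between $\,^\perp\{X_1, \ldots, X_k\}^\perp_Q$ and $\,^\perp\{E(X_1), \ldots, E(X_{k-1})\}^\perp_{Q^{\prime\prime}}$. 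By the induction hypothesis the latter is equivalent to $\C(Q^\prime)$ where $Q^\prime$ is a disjoint union of quivers of Dynkin type with $(n-1) - (k-1) = n-k$ vertices. Composing, $\,^\perp\{X_1, \ldots, X_k\}^\perp_Q$ is equivalent to $\C(Q^\prime)$, as required.
\end{proof}
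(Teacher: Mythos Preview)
Your proof is correct and follows essentially the same inductive approach as the paper's. The only cosmetic difference is the order of peeling: the paper first applies the induction hypothesis to $\{X_1,\ldots,X_k\}$ inside $\C(Q)$ and then uses \ref{mainprop} on $X_{k+1}$ in the relevant connected component of the resulting $\C(Q')$, whereas you first strip off one object with \ref{mainprop} and then invoke the induction hypothesis (extended, as you note, to disconnected $Q$) on the remaining $k-1$ objects.
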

\begin{proof}
We prove this by induction on $k$. The case when $k = 1$ is \ref{mainprop}. Let $\{X_1, \ldots, X_k, X_{k+1}\}$ be a Hom-free set in $\C (Q)$. Then so is $\{X_1, \ldots, X_k \}$ and by induction we have
\begin{equation}\label{induction}
\,^\perp \{X_1, \ldots, X_k\}^\perp \simeq \C (Q^\prime),
\end{equation}
where $Q^\prime$ is a disjoint union of quivers of Dynkin type, whose sum of vertices is $n - k$, in other words, $Q^\prime = \sqcup_{i=1}^t Q^i$, where each $Q^i$ is a Dynkin quiver and $\sum_{i=1}^t |Q^i_0| = n-k$.

We have $X_{k+1} \in \,^\perp \{X_1, \ldots, X_k \}^\perp_Q$, and so by \eqref{induction}, there exists a unique $i \in [t]$ for which $X_{k+1} \in \C (Q^i)$. Given $Y \in \C (Q^j)$, with $j \ne i$, we have
\[
\Hom_{\C (Q)} (X_{k+1}, Y) = 0 = \Hom_{\C (Q)} (Y, X_{k+1})
\]
since $\C (Q^l)$, $l \in [t]$, are pairwise orthogonal. Hence $\sqcup_{j \ne i} \C (Q^j) \subseteq \,^\perp (X_{k+1})^\perp_Q$.
Therefore,
\begin{equation*}
\begin{split}
\,^\perp \{X_1, \ldots, X_{k+1} \}^\perp_Q & = \,^\perp (X_{k+1})^\perp_Q \cap \,^\perp \{X_1, \ldots, X_k \}^\perp_Q \\
                                           & \simeq \,^\perp (X_{k+1})^\perp_Q \cap (\sqcup_{l \in [t]} \C (Q^l)) \, (by \eqref{induction}) \\
                                           & = ( \,^\perp (X_{k+1})^\perp_Q \cap \C (Q^i)) \sqcup ( \,^\perp (X_{k+1})^\perp_Q \cap \sqcup_{j \ne i} \C (Q^j)) \\
                                           & = ( \,^\perp (X_{k+1})^\perp_Q \cap \C (Q^i))) \sqcup (\sqcup_{j \ne i} \C (Q^j)).
\end{split}
\end{equation*}

Note that $\,^\perp (X_{k+1})^\perp_Q \cap \C (Q^i)) \simeq \,^\perp (X_{k+1})^\perp_{Q^i}$, which, by \ref{mainprop}, is equivalent to $\C (Q^{\prime \prime})$, where $Q^{\prime \prime}$ is a disjoint union of quivers of Dynkin type whose sum of vertices is $|Q^i_0| - 1$, and we are done.
\end{proof}

In representation theory, mutations are operations that act on a certain class of objects, and construct a new one from a given one by replacing a summand. For example, mutations of exceptional sequences and cluster mutations are of this type. In this case however, this won't make much sense. Indeed, let $\T = \{X_1, \ldots, X_n\}$ be a Hom-configuration in $\C (Q)$, and suppose we remove one object $X_i$. Then, by \ref{genmainres}, $\,^\perp (\T \setminus X_i)^\perp_Q$ is equivalent to $\C (Q^\prime)$ where $Q^\prime$ is of type $A_1$. Hence, the only completion of $\T$, i.e., the only object $Y$ of $\C (Q)$ for which $\T \cup Y$ is a Hom-configuration is $X_i$. Therefore, in order to define mutations of Hom-configurations, we need to remove more than one object.

A particular case of \ref{genmainres}, which will be useful later, is the following:

\begin{cor}\label{Cor3}
Let $\{X_1, \ldots, X_n\}$ be a Hom-configuration in $\C (Q)$. If we remove two objects, say $X_i$, and $X_j$, with $i \ne j$, then $\,^\perp \{X_k \, \mid \, k \ne i, j\}^\perp_Q \simeq \C (Q^\prime)$, where $Q^\prime$ is a quiver with two vertices which are either connected by a single arrow, i.e., $Q^\prime$ is of type $A_2$, or they are disconnected, i.e., $Q^\prime$ is of type $A_1 \times A_1$.
\end{cor}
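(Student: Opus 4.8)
The plan is to obtain this statement as an immediate specialization of \ref{genmainres}. First I would observe that $\{X_k \mid k \neq i, j\}$ is a Hom-free set in $\ind\,\C(Q)$: it is a subset of the Hom-configuration $\{X_1, \ldots, X_n\}$, hence its elements are pairwise Hom-orthogonal, and since the $X_k$ are pairwise non-isomorphic it has exactly $n-2$ elements. Applying \ref{genmainres} with $k = n-2$ then yields
\[
\,^\perp\{X_k \mid k \neq i, j\}^\perp_Q \simeq \C(Q'),
\]
where $Q'$ is a disjoint union of quivers of Dynkin type whose total number of vertices is $n-(n-2) = 2$.

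It then remains only to enumerate the disjoint unions of Dynkin quivers on two vertices. If $Q'$ is connected, it is a single Dynkin quiver on two vertices; since Dynkin diagrams have no multiple edges, its underlying graph is a single edge, so $Q'$ is of type $A_2$ (the orientation of the arrow is irrelevant for the equivalence class of $\C(Q')$, since the reflection functor at the sink induces an equivalence $\C(Q') \simeq \C(\sigma_i(Q'))$, as recalled in Section \ref{secsincere}). If $Q'$ is disconnected, each of its two vertices lies in its own connected component, so $Q' = A_1 \times A_1$. These exhaust the possibilities, which finishes the proof.

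I do not anticipate a genuine obstacle here: all the content is carried by \ref{genmainres}, and the only elementary points are that a subset of a Hom-free set is again Hom-free (so that \ref{genmainres} applies) and that there are exactly two isomorphism types of Dynkin quivers on two vertices.
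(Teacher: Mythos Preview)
Your proposal is correct and matches the paper's own treatment: the paper presents this corollary simply as ``a particular case of \ref{genmainres}'' without further proof, and your argument spells out precisely that specialization together with the trivial enumeration of Dynkin quivers on two vertices.
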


\begin{lemma}\label{Hom-configsQ_2}
Let $Q^\prime$ be a quiver of type $A_2$. Then the only two Hom-configurations in $\C (Q^\prime)$ are the set of simple modules, and $\{P, P^\prime [1]\}$, where $P$ is the injective-projective $KQ^\prime$-module and $P^\prime$ is the simple projective $KQ^\prime$-module.
\end{lemma}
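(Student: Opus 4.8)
The plan is to compute the Auslander-Reiten quiver of $\C(Q')$ explicitly for $Q'$ of type $A_2$ and simply list all maximal Hom-free sets. First I would fix an orientation, say $Q' : \xymatrix{2 \ar[r] & 1}$, so that $P_1 = S_1$ is the simple projective and $P_2$ is the injective-projective of dimension vector $(1,1)$; write $P := P_2$ and $P' := P_1 = S_1$. The fundamental domain $\E(Q')$ consists of $\ind(\mod KQ' \cup (\mod KQ' \setminus \I)[1])$. For type $A_2$ there are three indecomposable modules: $S_1 = P_1$, $S_2 = I_2$, and $P_2 = I_1$ (dimension vector $(1,1)$). The non-injective modules are $S_1$ and $P_2$, so $\E(Q')$ has exactly five objects: $S_1, S_2, P_2, S_1[1], P_2[1]$. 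The AR-quiver of $\C(Q')$ is then easy to draw; since $h-1 = 2$ for $A_2$, it is a small cyclic shape, and one reads off the nonzero Hom-spaces directly from $\D^b(Q')$ together with the extra morphisms $\Hom_{\D^b}(X,\tau Y[2]) \cong \Hom_{\D^b}(Y[1],X)$ coming from Proposition \ref{homC(Q)}.

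Next I would use Lemma \ref{Hom-config-BLR}: a Hom-free set is a Hom-configuration if and only if it has exactly $n = 2$ elements. So it suffices to enumerate the $2$-element Hom-free subsets of the five objects of $\E(Q')$. Going through the (at most) ten pairs, one checks which have a nonzero Hom in either direction inside $\C(Q')$; the pair $\{S_1, S_2\}$ is Hom-free (it is the set of simple modules, already known to be a Hom-configuration by the Example after Lemma \ref{Hom-config-BLR}), and one verifies that $\{P_2, S_1[1]\} = \{P, P'[1]\}$ is Hom-free using $\Hom_{\C(Q')}(P_2, S_1[1]) \cong \Hom_{\D^b}(P_2, S_1[1]) \oplus \Hom_{\D^b}(P_2, \tau S_1[2]) = \Ext^1_{KQ'}(P_2, S_1) \oplus \Hom_{KQ'}(S_1, P_2)$, the first summand vanishing because $P_2$ is projective and the second because any map $S_1 \to P_2$ would have image $S_1 \subseteq \rad P_2$ but... — actually $S_1$ does embed in $P_2$, so here I must be careful and instead recompute: $\Hom_{\C(Q')}(S_1[1], P_2) = \Hom_{\D^b}(S_1[1], \tau P_2[2]) \cong \Hom_{KQ'}(P_2, S_1)$, which is zero since $S_1 = \top P_2 \ne \soc$-related appropriately — the point is one of the two directions uses $\Hom_{KQ'}(P_2,S_1)$ and the other $\Hom_{KQ'}(S_1,P_2)$, and exactly the vanishing one appears in each; I would check the signs of the degrees carefully to confirm $\{P,P'[1]\}$ is indeed Hom-free, and that all remaining pairs fail.

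The main obstacle is purely bookkeeping: getting the identification of the five objects of $\E(Q')$ right and, for the pair $\{P, P'[1]\}$, correctly applying the decomposition $\Hom_{\C(Q')}(X,Y) = \Hom_{\D^b}(X,Y) \oplus \Hom_{\D^b}(X,\tau Y[2])$ so that one genuinely gets $\Hom_{KQ'}(P', P) \oplus \Ext^1_{KQ'}(P', P)$ in one direction (both zero: $P'=S_1$ is a submodule of $P$, so $\Hom(S_1,P)\ne 0$ — wait) and $\Hom_{KQ'}(P,P')\oplus\Ext^1_{KQ'}(P,P')$-type terms in the other. I would resolve this by computing with explicit dimension vectors and the known structure of $\D^b(A_2)$, where all these Hom- and Ext-spaces between the three indecomposables are one-dimensional or zero in completely explicit positions. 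Once the two Hom-configurations are exhibited and a short case-check rules out every other pair of distinct objects of $\E(Q')$, the lemma follows; alternatively, one could invoke Corollary \ref{numberHom} and the positive Fuss-Catalan number $C^+(A_2) = \tfrac{1}{3}\binom{4}{2} = 2$ to conclude immediately that there are exactly two Hom-configurations, having already displayed two distinct ones.
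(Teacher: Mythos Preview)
The paper gives no proof of this lemma; it is treated as a routine verification, so your overall strategy --- list the objects of $\E(Q')$, then either enumerate the two-element Hom-free subsets or display two distinct Hom-configurations and invoke $C^+(A_2)=2$ --- is exactly what is intended.

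There is, however, a bookkeeping slip in your proposal that you should fix before writing it up. For $Q':2\to 1$, the module $P_2$ of dimension vector $(1,1)$ is equal to $I_1$; it \emph{is} injective (it is the unique injective-projective, as the statement itself says). Hence the only non-injective indecomposable module is $S_1=P_1$, and $\E(Q')$ has four objects, not five:
\[
\E(Q')=\{\,S_1,\;P_2,\;S_2,\;S_1[1]\,\}.
\]
This actually makes the enumeration shorter: there are only $\binom{4}{2}=6$ pairs to check. Your confusion in the middle of the $\{P,P'[1]\}$ computation disappears once you track degrees carefully: using $\Hom_{\D^b}(X,\tau Y[2])\cong\Hom_{\D^b}(Y[1],X)$, one gets
\[
\Hom_{\C(Q')}(P_2,S_1[1])\cong \Ext^1_{KQ'}(P_2,S_1)\oplus\Hom_{\D^b}(S_1[2],P_2)=0,
\]
\[
\Hom_{\C(Q')}(S_1[1],P_2)\cong \Hom_{\D^b}(S_1[1],P_2)\oplus\Hom_{KQ'}(P_2,S_1)=0,
\]
and the nonzero space $\Hom_{KQ'}(S_1,P_2)$ that worried you simply never appears. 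The remaining four pairs each have an obvious nonzero Hom (an inclusion, a surjection, the nonsplit extension giving $\Ext^1(S_2,S_1)\ne0$, or the identity on $S_1$ showing up in $\Hom_{\C}(S_1[1],S_1)$). Alternatively, your Fuss-Catalan shortcut via Corollary~\ref{numberHom} is perfectly valid and is the cleanest way to finish once the two Hom-configurations have been exhibited.
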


Given these results, we are able to give a definition of mutation of Hom-configurations in $\C (Q)$. 

\begin{definition}\label{mutation}
Let $i, j \in [n], i < j$. The \textit{mutation $\mu_{i,j}$ of the Hom-configuration} $\T := \{X_1, \ldots, X_n\}$ in $\C (Q)$ is defined as follows:
\begin{enumerate} 
\item  If $\,^\perp \{\T \setminus \{X_i,X_j\}\}^\perp_Q$ is equivalent to $\C (Q^\prime)$ where $Q^\prime$ is a quiver of type $A_2$, then $\mu_{i,j} (\T)$ is the Hom-configuration obtained from $\T$ by replacing the Hom-configuration $\{X_i, X_j\}$ in $\C (Q^\prime)$ by the other possible Hom-configuration $\{X_i^\prime, X_j^\prime\}$ in this category (cf. \ref{Hom-configsQ_2}). 
\item For the remaining case, i.e., if $\,^\perp \{\T \setminus \{X_i,X_j\}\}^\perp_Q$ is equivalent to $\C (Q^\prime)$ where $Q^\prime$ is of type $A_1 \times A_1$, then $\mu_{i,j} (\T) = \T$.
\end{enumerate}
\end{definition}  

\begin{ex}
Consider the quiver $Q$ of type $A_3$: $\xymatrix{3 \ar@{->}[r] & 2 \ar@{->}[r] & 1}$. The following figure shows the graph of mutations of the Hom-configurations in $\C(Q)$, where the vertices correspond to Hom-configurations and the edges correspond to mutations:
\[
\xymatrix{ & \{1, 2, 3 \} \ar@{-}[d] \ar@{-}[dr] \\
\{1, 23, 2[1]\} \ar@{-}[ur] \ar@{-}[dr] & \{2, 123, 12[1]\} \ar@{-}[d] & \{3, 12, 1[1]\} \\
& \{123, 1[1], 2[1]\} \ar@{-}[ur]}
\]
\end{ex}

\begin{prop}
The graph of mutations $\G (Q)$ of Hom-configurations in $\C (Q)$ is connected.
\end{prop}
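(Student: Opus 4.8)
The plan is to prove, by induction on the number $n$ of vertices of $Q$, the equivalent statement that every Hom-configuration in $\C(Q)$ can be joined by a sequence of mutations to the Hom-configuration $\{S_1,\dots,S_n\}$ of simple $KQ$-modules. The base case $n=1$ is immediate, since $\C(Q)$ then has a single Hom-configuration. For the inductive step one needs the statement also for disjoint unions of Dynkin quivers with fewer than $n$ vertices; there $\G$ is the box product of the graphs of the components, so it is connected as soon as each component's graph is, and this is subsumed by the induction.

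\emph{The engine.} Suppose first that $\T=\{X_1,\dots,X_n\}$ is a Hom-configuration containing a simple projective module $P_a=S_a$. By \ref{mainlemma}, $\,^\perp P_a^\perp_Q$ is equivalent, via $F_{P_a}$, to $\C(Q\setminus\{a\})$, and $\T\setminus\{P_a\}$ corresponds to a Hom-configuration there. For $i,j$ with $X_i,X_j\ne P_a$, the category $\,^\perp\{\T\setminus\{X_i,X_j\}\}^\perp_Q$ appearing in \ref{mutation} lies inside $\,^\perp P_a^\perp_Q$ and is carried by $F_{P_a}$ to the corresponding perpendicular category computed in $\C(Q\setminus\{a\})$; hence $\mu_{i,j}$ fixes $P_a$, and the bijection $\T\mapsto\T\setminus\{P_a\}$ identifies the full subgraph of $\G(Q)$ on the Hom-configurations containing $P_a$ with $\G(Q\setminus\{a\})$. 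The latter is connected by the induction hypothesis, so $\T$ is joined by mutations to $\{P_a\}$ together with the $F_{P_a}^{-1}$-image of the simple modules of $\C(Q\setminus\{a\})$, which by \ref{functorF}(2) is precisely $\{S_1,\dots,S_n\}$. Thus every Hom-configuration containing a simple projective module is joined to $\{S_1,\dots,S_n\}$.

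\emph{Reduction via reflection functors.} Each reflection $R_i$ at a sink $i$ is an equivalence $\C(Q)\simeq\C(\sigma_i(Q))$ commuting with $\tau$ and $[1]$, and since the mutation of \ref{mutation} is defined purely in terms of $\Hom$-orthogonal subcategories, $R_i$ (and likewise $R_i^-$) sends mutations to mutations and induces an isomorphism of graphs $\G(Q)\cong\G(\sigma_i(Q))$. Given an arbitrary Hom-configuration $\T$, \ref{reflectionfunctor}(1) provides a composition of reflections $R=R_{i_k}\cdots R_{i_1}$ with $R(\T)$ a Hom-configuration of $\C(\sigma_R(Q))$ containing a simple projective module; applying the engine to $\sigma_R(Q)$ (whose one-vertex-deleted subquivers have $n-1$ vertices, so the induction hypothesis applies) shows $R(\T)$ is joined to the simple modules of $\C(\sigma_R(Q))$, whence $\T$ is joined in $\G(Q)$ to $R^{-1}(\{\text{simple }K\sigma_R(Q)\text{-modules}\})$. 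An induction on the length of $R$, using the graph isomorphisms just described, reduces the remaining task to the case of a single reflection: for $i$ a sink of $Q$, one must join $R_i^-(\{\text{simple }K\sigma_i(Q)\text{-modules}\})$ to $\{S_1,\dots,S_n\}$ in $\G(Q)$. By the description of $R_i^-$ recalled before \ref{reflectionfunctor}, this configuration equals $\{S_i[1]\}\cup\{S_b : b\not\sim i\}\cup\{M_b : b\sim i\}$, where $M_b$ is the indecomposable with dimension vector $\alpha_b+\alpha_i$; dealing with one neighbour of $i$ at a time and using \ref{Cor3} and \ref{Hom-configsQ_2} to identify the relevant $A_2$-type perpendicular categories, a short chain of mutations turns it into a Hom-configuration containing a simple projective module, which by the engine is joined to $\{S_1,\dots,S_n\}$.

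I expect this last step to be the main obstacle: reflection functors do not fix the configuration of simple modules, so one cannot merely "reflect and quote the simple-projective case", and making precise how a single reflection moves $\{S_1,\dots,S_n\}$ inside $\G(Q)$ — recognising which of the subcategories $\,^\perp(\cdot)^\perp$ are of type $A_2$ rather than $A_1\times A_1$, and tracking the corresponding swaps of \ref{Hom-configsQ_2} — is where the combinatorial work concentrates. Everything else (the perpendicular-category reduction, the reflection-induced graph isomorphisms, and \ref{Hom-config-BLR}, which guarantees that an $n$-element Hom-free set is automatically maximal) is formal once the machinery of the preceding sections is available.
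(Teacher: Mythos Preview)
Your overall strategy matches the paper's: induction on $n$, a perpendicular-category reduction showing that configurations sharing an object are connected, and reflection functors to transport an arbitrary $\T$ into that situation. The architecture is sound; the gap is precisely where you flag it, in the single-reflection step.

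Two adjustments close it, and this is what the paper does. First, state the engine for an \emph{arbitrary} common indecomposable $X$, not only a simple projective: Proposition~\ref{mainprop} gives $\,^\perp X^\perp_Q\simeq\C(Q')$ for any indecomposable $X$, and your lifting argument (mutations in $\C(Q')$ correspond to mutations in $\C(Q)$ fixing $X$) then shows that any two Hom-configurations sharing $X$ are connected. This is exactly the ``claim'' the paper proves first. Second, with this stronger engine the single-reflection step becomes almost free: the configuration $R_i^{-1}(\S_{\sigma_i(Q)})$ contains $S_b$ for every vertex $b$ not adjacent to $i$, and so does $\S_Q$, so the two share an object and are connected --- provided such a $b$ exists. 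The only simply-laced Dynkin quivers in which some vertex is adjacent to all others are $A_2$, $A_3$ and $D_4$, and the paper disposes of these by direct inspection as base cases.

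Your proposed alternative for the last step (mutate explicitly until a simple projective appears) is not obviously wrong, but it is genuinely incomplete: when $i$ is the unique sink of $Q$ the configuration $R_i^{-1}(\S_{\sigma_i(Q)})=\{S_i[1]\}\cup\{M_b:b\sim i\}\cup\{S_b:b\not\sim i\}$ contains no simple projective, and since $\Hom_{\C(Q)}(S_i,M_c)\ne 0$ for every neighbour $c$ of $i$, removing $S_i[1]$ together with a single $M_b$ does not produce a perpendicular category containing $S_i$ whenever $i$ has more than one neighbour. So the ``short chain of mutations'' really has to be exhibited, and this is a case analysis you have not carried out. The non-neighbour observation sidesteps that combinatorics entirely, at the modest cost of three small base cases.
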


\begin{proof}
We prove this by induction on the number of vertices $n$. For $n = 1$ there is nothing to prove. 

It is easy to check that this proposition holds in the cases when $Q$ is of type $A_2$, $A_3$ and $D_4$.

Let now $Q$ be any other Dynkin quiver with $n$ vertices. Note that given a vertex $i$ of $Q$ there exists a vertex $j$ which is not a neighbor of $i$, i.e., there is no arrow between $i$ and $j$. First we claim that two Hom-configurations $\T$ and $\T^\prime$ in $\C(Q)$ with a common object $X$ are connected by a path in $\G (Q)$. Let $F$ be the equivalence between $\,^\perp X^\perp$ and $\C(Q^\prime)$, where $Q^\prime$ is a Dynkin quiver with $n-1$ vertices (see \ref{mainprop}). We have that $F (\T \setminus X)$ and $F(\T^\prime \setminus X)$ are Hom-configurations in $\C(Q^\prime)$. By induction $\G (Q^\prime)$ is connected, so $F (\T \setminus X)$ and $F(\T^\prime \setminus X)$ are connected by a path in $\G(Q^\prime)$, i.e., there is a sequence of mutations $\mu_1, \ldots, \mu_k$ such that $\mu_1 \ldots \mu_k (F(\T \setminus X)) = F(\T^\prime \setminus X)$. This sequence of mutations can be lifted to a sequence of mutations in $\C(Q)$ fixing the object $X$. Hence $\T$ and $\T^\prime$ are connected by a path in $\G(Q)$, as we wanted to prove.

Now fix a Hom-configuration $\T$ in $\C(Q)$. We want to prove that $\T$ is connected to the simple Hom-configuration $\S$ by a path in $\G(Q)$. By \ref{Hom-confgsandmodules} (2) there exists a $KQ$-module $X$ in $\T$, and there is a sequence of reflection functors $R = R_{i_k} \ldots R_{i_1}$, where $i_j$ is a sink in $\sigma_{i_{j-1}} \ldots \sigma_{i_1} (Q)$, such that $R_{i_k} \ldots R_{i_1} (X)$ is a simple $K \sigma_R (Q)$-module. Since $R(\T)$ is a Hom-configuration in $\C(\sigma_R (Q))$ and it contains a simple module, it follows from what we claimed above that $R(\T)$ and the set of simple $K\sigma_R (Q)$-modules $\S_{\sigma_R (Q)}$ are connected by a path in $\G(\sigma_R (Q))$. Note that $i_k$ is a source in $\sigma_R (Q)$ and by assumption, there is a vertex $j$ which is not a neighbor of $i_k$. Hence $R_{i_k}^{-} (S_{\sigma_R (Q)} (j)) = S_{\sigma_{i_k} \sigma_R (Q)} (j)$ (cf. \cite[VII.5.4]{ASS}). Therefore $R_{i_k}^- (\S_{\sigma_R(Q)})$ is connected to $\S_{\sigma_{i_k} \sigma_R(Q)}$ by a path in $\G(\sigma_{i_k} \sigma_R (Q))$ since it contains a simple $K \sigma_{i_k} \sigma_R(Q)$-module. Since $R_{i_k}^- R(\T)$ and $R_{i_k}^- (\S_{\sigma_R(Q)})$ are connected by a path in $\G(\sigma_{i_k} \sigma_R (Q))$, it follows that $R_{i_k}^-R(\T)$ is also connected to $\S_{\sigma_{i_k}\sigma_R(Q)}$ by a path in $\G(\sigma_{i_k} \sigma_R (Q))$. Using the same argument when we apply the reflections $R_{i_{k-1}}^{-} \ldots R_{i_1}^{-}$, we deduce that $\T$ is connected to $\S_Q$ by a path in $\G(Q)$.
\end{proof}

\subsection*{Acknowledgments} The author would like to express her gratitude to her supervisor, Robert Marsh, for his help and advice. The author would also like to thank Aslak Buan, Idun Reiten and Hugh Thomas for making available a preliminary version of their paper \cite{BRT}, and to Aslak Buan for helpful conversations. She would also like to thank Fundac\~ao para a Ci\^encia e Tecnologia, for their financial support through Grant SFRH/ BD/ 35812/ 2007.


\end{document}